\newtheorem{corollary}{Corollary}[section]
\newtheorem{theorem}[corollary]{Theorem}
\newtheorem{lemma}[corollary]{Lemma}
\newtheorem{proposition}[corollary]{Proposition}
\newtheorem*{theorem*}{Theorem}
\theoremstyle{definition}
\newtheorem{definition}[corollary]{Definition}
\newtheorem{remark}[corollary]{Remark}
\newtheorem{example}[corollary]{Example}
\newcommand{\trop}{\operatorname{trop}}
\newcommand{\Id}{\operatorname{Id}}
\newcommand{\fgId}{\operatorname{fgId}}
\newcommand{\Frac}{\operatorname{Frac}}
\newcommand{\Spec}{\operatorname{Spec}}
\newcommand{\maxSpec}{\operatorname{maxSpec}}
\renewcommand{\Vert}{\operatorname{Vert}}
\newcommand{\Supp}{\operatorname{Supp}}
\newcommand{\In}{\operatorname{in}}
\newcommand{\tr}{\operatorname{tr}}
\title[Initial degeneration]{Tropical initial degeneration for systems of algebraic differential equations}
\author[L. Bossinger et al.]{Lara Bossinger}
\address{Instituto de Matem\'aticas Unidad Oaxaca, 
Universidad Nacional Aut\'onoma de M\'exico, Oaxaca, Mexico.}
\urladdr{\url{https://www.matem.unam.mx/~lara}}
\author[]{Sebastian Falkensteiner}
\address{Max Planck Institute for Mathematics in the Sciences, Leipzig, Germany.}
\urladdr{\url{https://sites.google.com/view/falkensteiner-sebastian}}
\author[]{Cristhian Garay-L\'opez}
\address{Centro de Investigaci\'on en Matem\'aticas, A.C. (CIMAT),
Jalisco S/N, Col. Valenciana CP. 36023 Guanajuato, Gto, M\'exico.}
\urladdr{\url{http://personal.cimat.mx:8181/~cristhian.garay/}}
\author[]{Marc Paul Noordman}
\address{Bernoulli Institute, University of Groningen, The Netherlands}
\urladdr{\url{https://www.rug.nl/staff/m.p.noordman}}
\keywords{Initial degeneration, Tropical Differential Algebra, B\'ezout domain, Non-Archimedean valuation, Maximal Ideals, Monomial Orders, Schemes over B\'ezout domains}
\begin{document}
\maketitle

\begin{abstract}
We study the notion of degeneration for affine schemes associated to systems of algebraic differential equations with coefficients in the fraction field of a multivariate formal power series ring. 
In order to do this, we use an integral structure of this field that arises as the unit ball associated to the tropical valuation, first introduced in the context of tropical differential algebra. This unit ball turns out to be a particular type of integral domain, known as B\'ezout domain.
By applying to these systems a translation map along a vector of weights that emulates the one used in classical tropical algebraic geometry, the resulting translated systems will have coefficients in this unit ball. When the resulting quotient module over the unit ball is torsion-free, then it gives rise to integral models of the original system in which every prime ideal of the unit ball defines an initial degeneration, and they can be found as a base-change to the residue field of the prime ideal.

In particular, the closed fibres of our integral models can be rightfully called initial degenerations, since we show that the maximal ideals of this unit ball naturally correspond to monomial orders. We use this correspondence to define initial forms of differential polynomials and initial ideals of differential ideals, and we show that they share many features of their classical analogues.
\end{abstract}

\section{Introduction}

In algebraic geometry, the study of algebraic geometric objects has long shifted from studying a single object to studying families of objects with certain characteristics in common. 
More specifically, degeneration techniques are a basic tool used broadly in algebraic geometry and are the foundation for subjects such as \emph{the theory of integral models} for varieties, or \emph{deformation theory}.

The aim of the present article is to introduce these techniques to study the spaces of formal power series solutions  of systems of algebraic differential equations. 
In the algebraic case, when studying solutions to polynomial equations, Gr\"obner theory provides the necessary tools to construct an \emph{initial degeneration} 
of the polynomial system: 
these are flat families over the affine line $\pi:\mathcal X\to \mathbb A^1$ whose fibre $\mathcal X_t=\pi^{-1}(t)$ over (almost) any non zero value $t\in \mathbb A^1$ is (isomorphic to) the set of solutions of the original system, and whose fibre over the origin $\mathcal X_0=\pi^{-1}(0)$ is the set of solutions to a simplified polynomial system (for example, the polynomial equations are truncated to binomial or monomial equations).

Gr\"obner theory is closely related to tropical geometry as the fundamental theorem of tropical (algebraic) geometry demonstrates~\cite{maclagan2015introduction}. 
More precisely, every initial degeneration with irreducible special fibre corresponds to a point in the tropical variety (see, e.g.~\cite{Bos_initial}).
The generalization of the fundamental theorem to polynomial differential systems was initiated in~\cite{AGT16} (the ordinary case) and taken further to the partial case in~\cite{FGH20,mereta_fund}.
Our construction originates from these results, more precisely from the study of tropical solutions to tropical systems of algebraic differential equations, and our findings may be phrased in terms of non-Archimedean valuations or seminorms, as we explain below.

We would like to mention that this paper is not the first exploring possible generalizations of algebraic methods to the differential case. In particular, Gr\"obner theoretical methods have been employed in a similar setting in the works~\cite{ArocaRond,Aroca-Ilardi,FT20}.

\subsection{A new example of B\'ezout non-Archimedean norm}
We consider the fraction field $K(\!(\mathbf{t})\!)$ of the ring of formal power series in $m\geq1$ variables $\mathbf{t}=(t_1,\ldots,t_m)$ and with coefficients in a field of characteristic zero $K$.

We study the algebraic and geometric properties of a recently introduced non-Archimedean norm $\trop$
defined on $K(\!(\mathbf{t})\!)$ having values in the (idempotent) fraction semifield of the semiring of \emph{vertex polynomials} $V\mathbb{B}(\mathbf{t})$ (\cite{CGL,FGH20}, see \S\ref{Sect_valuationsFormalPowerSeriesRings} precisely \eqref{eq: def tropical seminorm}. 
This is the {tropical} seminorm, and it is not a classical Krull valuation, but rather a new example of the concept of B\'ezout $\ell$-valuation\footnote{Recall that an $\ell$-valuation is a seminorm $v:K\xrightarrow[]{}S$ defined on a field $K$ and taking values in an idempotent semifield; it differs from Krull valuations in the sense that $S$ does not need to be totally-ordered.}, which was introduced in \cite{RY}. 

Even if the target semifield $V\mathbb{B}(\mathbf{t})$ is not totally ordered, this setting behaves in a similar way to the Krull valuations; in particular, the subring $K(\!(\mathbf{t})\!)^\circ\subset K(\!(\mathbf{t})\!)$ of elements having $\trop$ norm bounded by $1\in V\mathbb{B}(\mathbf{t})$ turns out to be a B\'ezout domain. In this context, this subring is the analogue of the valuation ring of a Krull valuation, and we call it the {\it unit ball} of the seminorm.

This yields one of the first concrete applications of B\'ezout $\ell$-valuations through tropical geometry.  We choose to present this concept in the language of non-Archimedean seminorms using idempotent semiring theory, and we recall connections to B\'ezout $\ell$-valuations.

\subsection{Application}
Consider the differential ring $(K[\![\mathbf{t}]\!],D)$, where $D=\{\tfrac{\partial}{\partial t_i}\::\:i=1,\ldots,m\}$ is the set of usual derivations. The {tropical} seminorm $\trop$ appeared first in the tropical aspect of systems of differential algebraic equations with coefficients in $K[\![\mathbf{t}]\!]$, see \cite{FGH20}. 

The differential field $(K(\!(\mathbf{t})\!),D)$ is a differential extension of  $(K[\![\mathbf{t}]\!],D)$, and when $m>1$, the B\'ezout domain $K(\!(\mathbf{t})\!)^\circ\subset K(\!(\mathbf{t})\!)$ is a proper extension of $K[\![\mathbf{t}]\!]$. 
We apply these previous results to set up the theory of tropical initial degeneration of systems of algebraic differential equations with coefficients in $K(\!(\mathbf{t})\!)$.

\subsection{Initial degenerations}
\label{SS:ID}
Given an algebraic variety $X$ over a field $K$ and an integral domain $R$ with fraction field $\Frac(R)=K$, a \emph{model} of $X$ over the base $B=\Spec(R)$ is a flat morphism $\pi:\mathcal X \to B$ such that $X\cong \mathcal X\times_B\Spec(K)$.  

Denote by $F_{m,n}$, respectively $R_{m,n}$, the ring of  polynomials in the variables $\{x_{i,J}\::\:i=1,\ldots,n,\:J\in\mathbb{N}^m\}$ with coefficients in $K(\!(\mathbf{t})\!)$, respectively in the unit ball $K(\!(\mathbf{t})\!)^\circ$. 
For a given weight vector $w=(w_1,\ldots,w_n)\in \mathbb B[\![\mathbf{t}]\!]^n$ (here $\mathbb B$ denotes the Boolean semifield) and a differential polynomial $P\in F_{m,n}$, we define its {$w$-translation $P_w\in R_{m,n}$}  in~\eqref{eq:init form}. Broadly speaking we are generalizing the ordinary case which appeared in~\cite{FT20,HuGao2020}.
{The $w$-translated ideal} of a given  ideal in $F_{m,n}$, is the  ideal in $R_{m,n}$ generated by the   $w$-translations of all its elements (Definition~\ref{def:init ideal}).
An initial degeneration is specified by choosing a prime ideal $\mathfrak{p}\subset K(\!(\mathbf{t})\!)^\circ$.

\begin{theorem*}(Proposition~\ref{Proposition_Model} and Theorem~\ref{thm_degeneration})
Let $w\in \mathbb B[\![\mathbf{t}]\!]^n$ and $G \subset F_{m,n}$ be an ideal such that {the quotient $R_{m,n}/G_w$ of $R_{m,n}$ by the $w$-translated ideal $G_w$ is a torsion-free $K(\!(\mathbf{t})\!)^\circ$-module}. 
Then $\mathcal X(w):=\Spec(R_{m,n}/G_w)$ is a model for $\mathcal X(w)_{\eta}:=\Spec(F_{m,n}/G)$ over $K(\!(\mathbf{t})\!)^\circ$.
Moreover, given any maximal ideal $\mathfrak m\in \maxSpec( K(\!(\mathbf{t})\!)^\circ)$ the corresponding closed fibre satisfies
\[
\mathcal X(w)_{\mathfrak{m}}\cong \Spec\left(K\{x_{i,J}\}/\overline{G}_w\!\right) 
\]
{where $\overline{G}_w$ is the induced ideal in $K\{x_{i,J}\}$ (see the Proof of Lemma~\ref{lem:fiber_general} for the precise definition of $\overline{G}_w$).}
\end{theorem*}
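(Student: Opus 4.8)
The statement combines a flatness-and-generic-fibre assertion (Proposition~\ref{Proposition_Model}) with an identification of the closed fibres (Theorem~\ref{thm_degeneration}), and the plan is to establish the three ingredients --- flatness of $\pi$, the generic fibre, the closed fibre --- one at a time, each reducing to a base-change computation together with one structural input about the unit ball. \emph{Flatness.} By \S\ref{Sect_valuationsFormalPowerSeriesRings} the unit ball $K(\!(\mathbf t)\!)^\circ$ is a B\'ezout domain, and over a B\'ezout domain --- indeed over any Pr\"ufer domain --- a module is flat if and only if it is torsion-free. Hence the hypothesis that $R_{m,n}/G_w$ is a torsion-free $K(\!(\mathbf t)\!)^\circ$-module gives at once that $\pi\colon\mathcal X(w)\to B:=\Spec\bigl(K(\!(\mathbf t)\!)^\circ\bigr)$ is flat; this is the only use of the torsion-freeness hypothesis and is exactly what makes the degeneration flat in the sense of \S\ref{SS:ID}.

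\emph{Generic fibre.} Since $R_{m,n}=K(\!(\mathbf t)\!)^\circ[x_{i,J}]$ and $F_{m,n}=K(\!(\mathbf t)\!)[x_{i,J}]$ and polynomial rings commute with base change, one has $R_{m,n}\otimes_{K(\!(\mathbf t)\!)^\circ}K(\!(\mathbf t)\!)\cong F_{m,n}$, so by right exactness of tensoring with $K(\!(\mathbf t)\!)$,
\[
(R_{m,n}/G_w)\otimes_{K(\!(\mathbf t)\!)^\circ}K(\!(\mathbf t)\!)\;\cong\;F_{m,n}\big/\bigl(G_w\cdot F_{m,n}\bigr).
\]
It remains to identify the extension $G_w\cdot F_{m,n}$ inside $F_{m,n}$. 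From the definition of the $w$-translation in \eqref{eq:init form}, each $P_w$ is obtained from $P$ by the change of coordinates of $F_{m,n}$ associated to $w$, possibly followed by multiplication by a nonzero element of $K(\!(\mathbf t)\!)$ to clear denominators; over the field both operations are invertible, so $\{P_w:P\in G\}$ generates $\sigma_w(G)$, where $\sigma_w$ denotes the induced automorphism of $F_{m,n}$, whence $F_{m,n}/(G_w\cdot F_{m,n})\cong F_{m,n}/\sigma_w(G)\cong F_{m,n}/G$. Together with the flatness above, this shows $\mathcal X(w)$ is a model of $\mathcal X(w)_\eta$, proving Proposition~\ref{Proposition_Model}.

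\emph{Closed fibre.} Fix $\mathfrak m\in\maxSpec\bigl(K(\!(\mathbf t)\!)^\circ\bigr)$, with residue field $\kappa(\mathfrak m)=K(\!(\mathbf t)\!)^\circ/\mathfrak m$. Under the correspondence between maximal ideals of $K(\!(\mathbf t)\!)^\circ$ and monomial orders (used in Lemma~\ref{lem:fiber_general}), reduction modulo $\mathfrak m$ is the passage to the leading coefficient with respect to the order, so the composite $K\hookrightarrow K(\!(\mathbf t)\!)^\circ\twoheadrightarrow\kappa(\mathfrak m)$ is an isomorphism; hence $R_{m,n}\otimes_{K(\!(\mathbf t)\!)^\circ}\kappa(\mathfrak m)\cong\kappa(\mathfrak m)[x_{i,J}]\cong K\{x_{i,J}\}$. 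Right exactness again gives
\[
\mathcal X(w)_{\mathfrak m}=\Spec\!\bigl((R_{m,n}/G_w)\otimes_{K(\!(\mathbf t)\!)^\circ}\kappa(\mathfrak m)\bigr)\;\cong\;\Spec\!\bigl(K\{x_{i,J}\}/\overline G_w\bigr),
\]
where $\overline G_w$ is by definition the image of $G_w$ under $R_{m,n}\twoheadrightarrow K\{x_{i,J}\}$, which is precisely the ideal introduced in the proof of Lemma~\ref{lem:fiber_general}. This establishes Theorem~\ref{thm_degeneration}.

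\emph{Where the difficulty lies.} The base changes are formal; the load-bearing points are (a) the B\'ezout $\Rightarrow$ (torsion-free $\Leftrightarrow$ flat) input, which is the reason torsion-freeness is assumed rather than proved, and (b) pinning down $\overline G_w$: although it is tautologically the image of an ideal under a surjection, the content --- and the part most sensitive to the differential setting --- is to recognise $\overline G_w$ as the initial ideal built from the $\mathfrak m$-initial forms $\In_{\mathfrak m}(P)$ of elements $P\in G$, and to verify that running over all of $G$, rather than a generating set, is genuinely necessary, just as a differential Gr\"obner basis is in the classical picture. For that reason the precise description of $\overline G_w$ is deferred to the proof of Lemma~\ref{lem:fiber_general}.
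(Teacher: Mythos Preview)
Your flatness argument and your closed-fibre computation track the paper's proofs exactly: Lemma~\ref{lem:flat} (torsion-free $\Leftrightarrow$ flat over a Pr\"ufer domain) for the first, and Lemma~\ref{lem:fiber_general} together with Corollary~\ref{quotient_by_maximal_ideal_is_K} for the second.

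The generic-fibre step, however, has a genuine gap. You assert that the substitution $x_{i,J}\mapsto \overline{\Theta(J)(e_K(w_i))}\,x_{i,J}$ defines an automorphism $\sigma_w$ of $F_{m,n}$, but this is false whenever some coefficient $\overline{\Theta(J)(e_K(w_i))}$ vanishes---which happens precisely when $\Supp(w_i)$ contains no exponent $\ge J$ coordinatewise (for instance $w_i$ with finite support and $J$ large). In that case $\sigma_w$ kills the variable $x_{i,J}$ outright; it is only a (non-injective) endomorphism, and there is no reason for $F_{m,n}/(G_w\cdot F_{m,n})$ to be isomorphic to $F_{m,n}/G$.

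It is worth noting that the paper's Proposition~\ref{Proposition_Model}, as actually stated and proved in the body, makes a more modest claim than the introductory formulation you are working from: it only says that $\mathcal X(w)$ is a model for \emph{its own generic fibre} $\mathcal X(w)_\eta$, and the one-line proof is nothing but flatness via Lemma~\ref{lem:flat}. The identification $\mathcal X(w)_\eta\cong\Spec(F_{m,n}/G)$ announced in the introduction is not argued in the body. So your attempt to establish it goes beyond what the paper does, and the argument you give does not go through without an extra hypothesis on $w$ guaranteeing that all the scaling factors $\overline{\Theta(J)(e_K(w_i))}$ are nonzero.
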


A concept closely related to the fibre of a model is the one of initial ideals.
We use the $w$-translation map together with a maximal ideal $\mathfrak{m}\subset K(\!(\mathbf{t})\!)^{\circ}$ to define the initial form $\text{in}_{(w,\mathfrak{m})}(P)$ of $P\in F_{m,n}$ with respect to the pair $(w,\mathfrak{m})$, and the initial ideal $\text{in}_{(w,\mathfrak{m})}(G)$ with respect to the pair $(w,\mathfrak{m})$ of an ideal $G\subset F_{m,n}$.

We show that for a polynomial $P\in F_{m,n}$, our expressions \eqref{eq:explicit_translation} for its $w$-translation $P_w\in R_{m,n}$, and \eqref{eq:explicit_initial_deg} for its initial form $\text{in}_{(w,\mathfrak{m})}(P)$  are natural generalizations of the same constructions for the case of standard tropical algebra, c.f. \cite[\S5]{gubler2013guide}.
The second half of the above theorem follows from a careful analysis of the set of  maximal ideals in $K(\!(\mathbf{t})\!)^\circ$ denoted by $\maxSpec( K(\!(\mathbf{t})\!)^\circ)$. 
We find the following result which is proven in a  constructive manner in both directions.

\begin{theorem*}(\cref{thm_characterization})
There exists an explicit one-to-one correspondence between maximal ideals of $K(\!(\mathbf{t})\!)^{\circ}$ and monomial orders on $\mathbb{N}^m$.
\end{theorem*}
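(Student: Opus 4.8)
The plan is to write down the correspondence explicitly in both directions and then check that the two constructions are mutually inverse.

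\emph{Monomial orders $\to$ maximal ideals.} Given a monomial order $\prec$ on $\mathbb N^m$, I would extend it to a total order on the group $\mathbb Z^m$ and, for $f=g/h$ with $g,h\in K[\![\mathbf{t}]\!]\setminus\{0\}$, set $\nu_\prec(f)=\min_\prec\Supp(g)-\min_\prec\Supp(h)$ (the minima exist since $\prec$ well-orders $\mathbb N^m$). In a domain the $\prec$-lowest term of a product is the product of the $\prec$-lowest terms, so $\nu_\prec$ is a Krull valuation with value group $(\mathbb Z^m,\prec)$ and residue field $K$ (the class of a $\nu_\prec$-unit $g/h$ is the ratio of the $\prec$-leading coefficients of $g$ and $h$). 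Since $\prec$ refines the coordinatewise partial order, $\trop(f)\le 1$ forces $\nu_\prec(f)\ge 0$, so $K(\!(\mathbf{t})\!)^\circ$ is contained in the valuation ring $\mathcal O_\prec$ of $\nu_\prec$, and $\mathfrak m_\prec:=\mathfrak m_{\mathcal O_\prec}\cap K(\!(\mathbf{t})\!)^\circ$ is a prime with $K\hookrightarrow K(\!(\mathbf{t})\!)^\circ/\mathfrak m_\prec\hookrightarrow \mathcal O_\prec/\mathfrak m_{\mathcal O_\prec}=K$, hence maximal. This assignment is injective: if $J\prec_1 K$ but $K\prec_2 J$, then $\mathbf t^J/(\mathbf t^J+\mathbf t^K)\in K(\!(\mathbf{t})\!)^\circ$ has $\nu_{\prec_1}$-value $0$ but positive $\nu_{\prec_2}$-value, so it lies in $\mathfrak m_{\prec_2}\setminus\mathfrak m_{\prec_1}$.

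\emph{Maximal ideals $\to$ monomial orders.} For $\mathfrak m$ maximal, since $K(\!(\mathbf{t})\!)^\circ$ is a B\'ezout domain the localization $\mathcal O_{\mathfrak m}:=K(\!(\mathbf{t})\!)^\circ_{\mathfrak m}$ is a valuation ring of $K(\!(\mathbf{t})\!)$; let $w_{\mathfrak m}$ be its valuation and declare $J\prec_{\mathfrak m}K$ iff $w_{\mathfrak m}(\mathbf t^J)<w_{\mathfrak m}(\mathbf t^K)$, which after unwinding the localization reads $\mathbf t^K/(\mathbf t^J+\mathbf t^K)\in\mathfrak m$. Additivity of $w_{\mathfrak m}$ gives translation invariance, and $0$ is the least element because every $t_i$ lies in the Jacobson radical of $K(\!(\mathbf{t})\!)^\circ$ (if $\trop(z)\le1$ then $\trop(t_iz)<1$, so $1-t_iz$ has $\trop$-norm $1$ and is a unit), so $w_{\mathfrak m}(t_i)>0$. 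The decisive point is that $\prec_{\mathfrak m}$ is \emph{total}, i.e.\ $w_{\mathfrak m}$ is injective on monomials: a tie $w_{\mathfrak m}(\mathbf t^J)=w_{\mathfrak m}(\mathbf t^K)$ with $J\neq K$ would make $\mathbf t^J/(\mathbf t^J+\mathbf t^K)$ and $\mathbf t^K/(\mathbf t^J+\mathbf t^K)$ both units of $\mathcal O_{\mathfrak m}$ summing to $1$. The hard part will be ruling this out; I would do it by first establishing $K(\!(\mathbf{t})\!)^\circ/\mathfrak m=K$ — writing a denominator as $h=\sum_v\mathbf t^v h_v$ with the sum over vertices $v$ of $\Supp(h)$ and $h_v\in K[\![\mathbf{t}]\!]^\times$, so that any unit-ball element becomes a $K$-combination of the generators $\mathbf t^v/h$, whose residues are then pinned to $K$ — and then noting that under the tie $\mathbf t^J/(\mathbf t^J+\mathbf t^K)$ would be congruent mod $\mathfrak m$ to some $c\in K\setminus\{0,1\}$, contradicting that $\mathbf t^J/(\mathbf t^J+\mathbf t^K)-c$ has $\trop$-norm $1$ and is a unit. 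This is precisely where maximality of $\mathfrak m$ (not mere primality, for which ties really do occur) is essential.

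\emph{The maps are inverse.} By the standard description of the overrings of a Pr\"ufer domain, $\mathcal O_\prec=K(\!(\mathbf{t})\!)^\circ_{\mathfrak m_\prec}$, so $w_{\mathfrak m_\prec}$ and $\nu_\prec$ are equivalent valuations and induce the same order on monomials, giving $\prec_{\mathfrak m_\prec}=\prec$. For the other composition I would set $\prec:=\prec_{\mathfrak m}$ and show $\mathfrak m_\prec\subseteq\mathfrak m$, which by maximality yields equality: if $x=g/h$ with $\min_\prec\Supp(g)\succ\min_\prec\Supp(h)$, then using that $g$ lies in the ideal generated by the monomials $\mathbf t^v$ for $v$ a vertex of $\Supp(g)$ — principal by the B\'ezout property, with generator $\sum_v\mathbf t^v$ — one gets $w_{\mathfrak m}(g)=w_{\mathfrak m}(\mathbf t^{\min_\prec\Supp(g)})$ (the equality again invoking injectivity of $w_{\mathfrak m}$ on monomials), and likewise for $h$, whence $w_{\mathfrak m}(x)>0$, i.e.\ $x\in\mathfrak m$. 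So $\prec\mapsto\mathfrak m_\prec$ is the claimed bijection, with inverse $\mathfrak m\mapsto\prec_{\mathfrak m}$; all steps except the totality of $\prec_{\mathfrak m}$ (equivalently $K(\!(\mathbf{t})\!)^\circ/\mathfrak m=K$) are routine, and that step is the expected obstacle.
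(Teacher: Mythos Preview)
Your overall valuation-theoretic route (extending $\prec$ to a Krull valuation $\nu_\prec$ and contracting its maximal ideal; conversely reading $\prec_{\mathfrak m}$ off the localization $K(\!(\mathbf{t})\!)^\circ_{\mathfrak m}$, which is a valuation ring since $K(\!(\mathbf{t})\!)^\circ$ is Pr\"ufer) is sound and is a more structural alternative to the paper's hands-on approach, which builds the explicit surjection $c_<:f/g\mapsto f_{\min_<\Supp(g)}/g_{\min_<\Supp(g)}$ and verifies the order axioms for $\preceq_{\mathfrak m}$ by direct manipulation of fractions $\mathbf t^I/(\mathbf t^I+\mathbf t^J+\mathbf t^L)$. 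The gap is exactly where you flag it, but your proposed argument for $K(\!(\mathbf{t})\!)^\circ/\mathfrak m=K$ does not work. The decomposition $h=\sum_v\mathbf t^v h_v$ with $h_v\in K[\![\mathbf t]\!]^\times$, summed over the vertices of $\Supp(h)$, is impossible in general: for $h=t_1^2+t_1t_2+t_2^2$ the vertices are $(2,0)$ and $(0,2)$, yet $t_1t_2\notin t_1^2K[\![\mathbf t]\!]+t_2^2K[\![\mathbf t]\!]$. Even after the legitimate move of replacing $h$ by its associate $\sum_v c_v\mathbf t^v$, you are still left needing that each $\mathbf t^v/h$ has residue in $K$; the phrase ``pinned to $K$'' hides precisely the missing content and is not a formality.

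The paper supplies a different idea. Given $q=f/g\in K(\!(\mathbf{t})\!)^\circ$ with vertex set $\{I_1,\dots,I_n\}$ of $g$, set $\alpha_k=f_{I_k}/g_{I_k}\in K$. Then the $k$-th factor $f-\alpha_kg$ has zero coefficient at $I_k$, so for any linear functional selecting $I_k$ as the minimal vertex of $g$, the minimal vertex of $\prod_k(f-\alpha_kg)$ lies strictly beyond $nI_k$; hence $\trop\bigl(\prod_k(q-\alpha_k)\bigr)\ll 1$ in the sense that the numerator's support misses every vertex of $g^n$. A separate short lemma shows that any element with $\trop\ll 1$ lies in every maximal ideal (otherwise $1=r+qs$ with $r\in\mathfrak m$ would force $r$ to be a unit). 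Primality of $\mathfrak m$ then yields $q\equiv\alpha_k\in K$ for some $k$. With this established, your totality argument --- that a tie would force $\mathbf t^J/(\mathbf t^J+\mathbf t^K)\equiv c\in K\setminus\{0,1\}$, whereas $\mathbf t^J/(\mathbf t^J+\mathbf t^K)-c$ has $\trop$-norm $1$ and is a unit --- goes through and is in fact crisper than the paper's antisymmetry computation.
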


This characterization is important since it allows us to give explicit formulas for computing the initial form $\text{in}_{(w,\mathfrak{m})}(P)$ of $P\in F_{m,n}$ with respect to the pair $(w,\mathfrak{m})$, see for instance \eqref{eq:concrete_reduction}, \eqref{eq:explicit:fpwcoeff}.

\subsection{Outline}
In \cref{Sect_Preliminaries} we introduce non-Archimedean seminorms, idempotent semiring theory and B\'ezout $\ell$-valuations. We give results on these concepts which will be necessary for defining initial degeneration. Of uttermost importance hereby is the correspondence theorem between ideals in the domain and $k$-ideals in the image of a B\'ezout valuation (see \cref{ideals_are_the_same}) which is new up to our knowledge.
In Subsection~\ref{Sect_valuationsFormalPowerSeriesRings} we show that $\trop$ is indeed a B\'ezout seminorm on $K(\!(\mathbf{t})\!)$ (\cref{Proposition_trop_is_Bezout}) such that the previous result can be applied.
Moreover, in \cref{Proposition_nonNoetherian} we show that the ring $K(\!(\mathbf{t})\!)^{\circ}$ defined by $\trop$ is in the multivariate case not Noetherian.

In \cref{Sect_models_deg}, the notions from tropical differential algebra are recalled and results from \cref{Sect_Preliminaries} are used to show that models are found in this setting (\cref{Proposition_Model}) such that initial degenerations can be properly defined, see \cref{thm_degeneration}. We give the definitions of initial form and initial ideal at the pair $(w,\mathfrak{m})$ in Definition \ref{def:new_initials}.

The maximal ideals of $K(\!(\mathbf{t})\!)^{\circ}$ are studied detailed in the subsequent \cref{section_maximalIdeals}. In Proposition \ref{prop:multiplicative}, we show that taking the initial form at the pair $(w,\mathfrak{m})$ of a polynomial is a multiplicative map, as it is the case in classical tropical algebraic geometry. 

\section{Preliminaries}
\label{Sect_Preliminaries}
\subsection{Non-Archimedean seminorms}
Let $R$ be a (commutative) ring and let $S$ be an idempotent (commutative) semiring. On $S$ we define the  order $a \le b$ if and only if $a+b=b$. Order considerations on idempotent semirings will be made with respect to this order if no further remark is made.
\begin{definition}\label{def_gen_nav}
A {\em (non-Archimedean) seminorm} is a map $v:R\xrightarrow{}S$ from a ring $R$ to an idempotent semiring $S$ that satisfies
\begin{enumerate}
    \item (unit) $v(0)=0$ and $v(1)=1$;
    \item (sign) $v(-1)=1$;
    \item (submultiplicativity) $v(ab)\leq v(a)v(b)$; and
    \item (subaditivity) $v(a+b)\leq v(a)+v(b)$; 
\end{enumerate}
The seminorm $v$ is a {\em norm} if every $a \ne 0$ fulfills $v(a) \ne 0$, and it is called multiplicative if $v(ab)= v(a)v(b)$ holds for every $a,b \in R$. A multiplicative norm is called a {\em valuation}.
\end{definition}

Let $v:R\xrightarrow{}S$ be a seminorm. The set $R^{\circ}:=\{a \in R : v(a) \leq 1\}$ is called the \emph{unit ball of $v$}, and is a subring of $R$. 
By a little abuse of notation we also refer to the set $S^\circ:=\{x\in S:x\le 1\}$ as the unit ball of $S$; it is a subsemiring of $S$.

\begin{definition}
    A seminorm $v:R\xrightarrow{}S$ is called {\em integral} if $v(a) \leq 1$ for all $a \in R$.
\end{definition}

A seminorm $v:R\xrightarrow{}S$ is integral if and only if $R^\circ=R$.
By restricting the domain of definition of $v$ to $R^{\circ}$, an integral seminorm $v^{\circ}:R^\circ\to S^\circ$ is induced.

\begin{definition}
A subset $I$ of a semiring $S$ is an ideal if $0$, $a+b$ and $ac$ are elements of $I$ whenever $a,b\in I$ and $c\in S$. An ideal $I$ of $S$ is 
\begin{enumerate}
    \item a $k$-\emph{ideal} or a \emph{subtractive ideal}, if whenever $a+b\in I$, $a \in I$ and $b \in S$, then $b \in I$.
    \item \emph{prime}, if its complement $S\setminus I$ is a multiplicative subset of $S$.
\end{enumerate}
\end{definition}

\begin{lemma}\label{lem:substractive=downward closed}
Let $S$ be an idempotent semiring. An ideal $I$ of $S$ is \emph{downward closed} if whenever $b\in I$ and $a\le b$, we have $a\in I$. Subtractive ideals are equivalent to downward closed ideals.
\end{lemma}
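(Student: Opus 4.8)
The plan is to prove the two implications separately, using idempotency of $S$ at the crucial point. First I would unwind the definition of the order: by construction $a \le b$ means $a + b = b$, so the statement "downward closed" really says that $a+b \in I$ forces $a \in I$ whenever $b\in I$.

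For the direction "downward closed $\Rightarrow$ subtractive", suppose $I$ is downward closed, and assume $a+b \in I$ with $a \in I$ (the convention is that the named element lies in $I$; here I must be careful to match the statement's hypothesis $a\in I$, $b\in S$). Set $c := a+b \in I$. I claim $b \le c$: indeed, using idempotency, $b + c = b + (a+b) = (a + b) + b = a + (b+b) = a+b = c$, so $b \le c$. Since $c \in I$ and $I$ is downward closed, $b \in I$, which is exactly subtractivity.

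For the converse, "subtractive $\Rightarrow$ downward closed", suppose $I$ is subtractive and take $b \in I$, $a \le b$, i.e.\ $a + b = b$. Then $a + b = b \in I$; applying the subtractive property with the roles so that $a \in I$ is what we want to conclude — more precisely, rewrite $b = a + b$ and note $b \in I$, $b \in S$; subtractivity (with the summand $b$ playing the role of the "known" element in $I$, and $a$ the one to be deduced) yields $a \in I$. One should double-check the asymmetry in the definition of subtractive ideal: it is stated as "$a+b \in I$, $a \in I$ $\Rightarrow$ $b\in I$", so to extract $a$ I instead write the sum as $b + a \in I$ with $b \in I$, giving $a \in I$; commutativity of $S$ makes this legitimate.

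I do not expect any real obstacle here — the only subtlety is bookkeeping the asymmetric-looking definition of subtractive ideal against the symmetric nature of $+$, and remembering to invoke idempotency ($b+b=b$) exactly once, in the step showing $b \le a+b$. Everything else is a direct substitution.
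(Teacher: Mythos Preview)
Your proposal is correct and follows essentially the same approach as the paper: both directions are proved by unwinding the definition $a\le b \iff a+b=b$, with idempotency used exactly once to establish $b \le a+b$ in the ``downward closed $\Rightarrow$ subtractive'' direction. The only cosmetic differences are the order in which you treat the two implications and your more explicit bookkeeping of the commutativity/variable-naming issue.
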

\begin{proof}
Assume $I\subset S$ is subtractive and take $b\in I$ and $a\in S$ with $a\le b$. Then $a+b=b\in I$ which implies $a\in I$ as $I$ is subtractive. 

To the contrary, if $I$ is downward closed, consider $b\in I, a\in S$ and $a+b\in I$. 
Since $a+(a+b) = a+b$, it follows that $a \le a+b$, and because $I$ is downward closed, $a \in I$.
\end{proof}

Let $R$ be a ring. Let $\Id(R)$, respectively $\fgId(R)$, denote the set of ideals of $R$, respectively finitely generated ideals of $R$. 
Note that $\Id(R)$ and $\fgId(R)$ are semirings with respect to the sum and product of ideals.

If $S$ is a semiring, we denote by $\Id_k(S)$ the set of $k$-ideals of $S$. By \cite[Corollary 3.8]{BG} the map $u_R:R\to \fgId(R)$ sending an element $a\in R$ to its principal ideal $(a)$ is a surjective valuation that induces an isomorphism of semirings $\Id(R)\cong \Id_k(\fgId(R))$. 
We use this correspondence for describing the maximal ideals of the unit balls $R^{\circ}$ and $V\mathbb{B}(\mathbf{t})^\circ$ of the tropical valuation $\trop:K(\!(\mathbf{t})\!)\rightarrow V\mathbb{B}(\mathbf{t})$ in terms of monomial orders in Section~\ref{section_maximalIdeals}.

\subsection{B\'ezout $\ell$-valuations as seminorms}
If $S$ is a semiring, we denote by $U(S)\subset (S,\times,1)$ its group of multiplicatively invertible elements.

We continue with the following concepts.

\begin{definition}
{An (integral) domain} $R$ is called a \emph{B\'ezout domain} if every finitely generated ideal is principal.
\end{definition}

Let $R$ be a B\'ezout domain. The greatest common divisor (gcd) and least common multiple (lcm) of two elements $a,b\in R$ always exist, and they satisfy $$ab=u\,\text{gcd}(a,b)\,\text{lcm}(a,b)$$ for some $u\in U(R)$.

We write $\Gamma(R):=R/U(R)$ and denote by $\pi:R\to \Gamma(R)$ the quotient projection, sending $a\in R$ to its class $\pi(a)=[a]$. Note that $\Gamma(R)$ endowed with the product $[a][b]=[ab]$ and addition $[a]+[b]=[\text{gcd}(a,b)]$ is an idempotent semiring, and $\pi:R\to \Gamma(R)$ becomes an integral valuation.

\begin{remark}
If $R$ is a B\'ezout domain, then there is an isomorphism of semirings $\fgId(R)\cong \Gamma(R)$, thus the norm $u_R:R\to \fgId(R)$ coincides with the quotient projection $\pi:R\to \Gamma(R)$.
\end{remark}

If $R$ is a B\'ezout domain, then the semiring $\Gamma(R)$ is multiplicatively cancellative and the fraction semifield $\Frac(\Gamma(R))$ exists. The valuation $\pi:R\to \Gamma(R)$ can be extended to a valuation $\Frac(\pi):\Frac(R)\to \Frac(\Gamma(R))$ with the usual definition: $\Frac(\pi)(\tfrac{a}{b})=\tfrac{\pi(a)}{\pi(b)}$.

\begin{definition}
Let $R$ be a B\'ezout domain. Its divisibility semiring is the semiring $\Gamma(R)$, and its divisibility semifield is the semifield $\Frac(\Gamma(R))$.
\end{definition} 

Throughout the remaining part of section, $R$ denotes a field and $S$ a semifield.
In this case, $\Frac(R) = R$, $\Gamma(R)=\{ [0],[1] \}$, and $\pi:R\to \Gamma(R)$ is the trivial norm.

\begin{definition}
\label{def:Bezout_snorm}
Let $R$ be a field and let $S$ be a semifield. 
We say that a multiplicative seminorm $v:R\to S$ is \emph{B\'ezout} if for every $a,b\in R$, there exists $x,y\in R^\circ$ such that $v(xa+yb)=v(a)+v(b)$.
\end{definition}

Since $R$ is a field, it follows that a B\'ezout seminorm is automatically a norm, hence, it is a valuation after Definition \ref{def_gen_nav}.

Let $v:R\to S$ be B\'ezout valuation with induced integral norm $v^{\circ}:R^{\circ}\to S^{\circ}$. We start with the following observation about principal ideals of $R^{\circ}$.

\begin{lemma}
\label{ideals_are_downwards_closed_gen}
Given $a\in R^\circ$, we have  
\[ (a) = \{b \in R^\circ : v(b) \leq v(a)\}. \]
\end{lemma}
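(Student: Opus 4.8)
The plan is to prove both inclusions of the set equality $(a) = \{b \in R^\circ : v(b) \leq v(a)\}$, working inside the B\'ezout domain $R^\circ$ and using the multiplicativity of $v$ together with the defining B\'ezout property of the seminorm.

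For the inclusion $(a) \subseteq \{b \in R^\circ : v(b) \leq v(a)\}$: any $b \in (a)$ can be written as $b = ca$ for some $c \in R^\circ$, so $v(c) \leq 1$. Then, since $v$ is multiplicative, $v(b) = v(c)v(a) \leq 1 \cdot v(a) = v(a)$, using that multiplication by $v(a)$ preserves the order on $S$ (which holds because $S$ is a semifield, so multiplication by any element distributes over sums, hence is order-preserving). This direction is essentially immediate.

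For the reverse inclusion $\{b \in R^\circ : v(b) \leq v(a)\} \subseteq (a)$: take $b \in R^\circ$ with $v(b) \leq v(a)$. First I would dispatch the trivial case $a = 0$: then $v(a) = 0$ forces $v(b) = 0$, hence $b = 0 \in (a)$ since $v$ is a norm. Assume now $a \ne 0$. The key step is to produce an element $c \in R^\circ$ with $b = ca$, equivalently $c = b/a \in R$ with $v(b/a) \leq 1$. By multiplicativity of $v$ extended to $R$ (recall $v$ is a valuation on the field $R$, so $v(b/a) = v(b)/v(a)$ in the semifield $S$), the condition $v(b) \leq v(a)$ translates directly into $v(b/a) = v(b) v(a)^{-1} \leq v(a) v(a)^{-1} = 1$, again using that multiplication by the invertible element $v(a)^{-1} \in S$ is order-preserving. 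Hence $c := b/a \in R^\circ$ and $b = ca \in (a)$.

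I expect the main subtlety — though it is minor — to be bookkeeping the extension of $v$ from $R^\circ$ to the field $R$ and the semifield arithmetic: one must be careful that $v(a)$ is genuinely invertible in $S$ (true since $a \ne 0$ in the field $R$ and $S$ is a semifield, so $v(a) \ne 0$ is a unit), and that the order $\le$ on $S$ is compatible with multiplication by units. Notably, the B\'ezout condition from Definition~\ref{def:Bezout_snorm} is \emph{not} actually needed for this particular lemma — multiplicativity of the valuation alone suffices — so I would not invoke it; the B\'ezout hypothesis will matter later, when one wants $R^\circ$ itself to be a B\'ezout domain and when comparing sums of ideals. The entire argument is a short computation once the semifield order structure is set up correctly.
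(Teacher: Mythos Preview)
Your proof is correct and follows essentially the same approach as the paper: both dispose of the case $a=0$ first and then, for $a\neq 0$, use multiplicativity of $v$ on the field $R$ to obtain the chain $v(b)\le v(a)\iff v(b/a)\le 1\iff b/a\in R^\circ\iff b\in(a)$. The paper presents this as a single line of equivalences while you unpack it into two inclusions, but the content is identical; your observation that the B\'ezout hypothesis is not actually used here is also accurate.
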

\begin{proof}
The statement is clear if $a = 0$, so assume that $a \neq 0$. Then a straightforward computation shows that
\begin{align*}
    v(b) \leq v(a)  \Longleftrightarrow v\left(\frac{b}{a}\right)=\frac{v(b)}{v(a)}\leq1 \Longleftrightarrow \frac{b}{a} \in R^\circ \Longleftrightarrow b \in (a).\quad\qedhere
\end{align*} 
\end{proof}

This already has the consequence that ideals are determined by seminorms, in the following sense.

\begin{corollary}\label{trop_determines_elements_of_ideal}
Let $I$ be an ideal of $R^\circ$ and $a \in I$. Then for any $b \in R^\circ$ with $v(b) \leq v(a)$, we have $b \in I$.
\end{corollary}
\begin{proof}
If $v(b) \leq v(a)$ then by \cref{ideals_are_downwards_closed_gen}, we have $b \in (a) \subset I$.
\end{proof}

Therefore, an ideal $I \subset R^\circ$ is uniquely determined by the subset $v(I) \subseteq S^\circ$ and we have
\begin{equation}\label{eq:I and v(I)}
I = \{a \in R  : v(a) \in v(I)\}.    
\end{equation}

It follows from \cref{ideals_are_downwards_closed_gen} that $v(I)$ is a {downward closed} subset of $S^\circ$. 
In particular, $v(I)$ is closed under multiplication with elements in $S^\circ$: if $b \in v(I)$ and $a \in S^\circ$, then $ab \leq b$ (since $a \leq 1$) and so $ab \in v(I)$.

\begin{lemma}\label{closed_under_addition_gen}
Let $v: R \to S$ be a B\'ezout valuation such that $v^{\circ}$ is surjective. 
Let $I$ be an ideal of $R^\circ$ and let $x, y \in v(I)$.
Then $x + y \in v(I)$.
\end{lemma}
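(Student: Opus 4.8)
The plan is to reduce the statement to a single application of the B\'ezout property of $v$. Given $x, y \in v(I)$, since $v^\circ$ is surjective we may choose $a, b \in R^\circ$ with $v(a) = x$ and $v(b) = y$; moreover, by \eqref{eq:I and v(I)} we may arrange $a, b \in I$ (replace any preimage by one lying in $I$, which exists because $v(I)$ determines $I$ and $x,y \in v(I)$). Now apply Definition~\ref{def:Bezout_snorm} to $a$ and $b$: there exist $\xi, \eta \in R^\circ$ with $v(\xi a + \eta b) = v(a) + v(b) = x + y$.

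The key point is then that $\xi a + \eta b \in I$: since $I$ is an ideal of $R^\circ$, $a, b \in I$ and $\xi, \eta \in R^\circ$, closure under multiplication by ring elements and under addition gives $\xi a + \eta b \in I$. Therefore $x + y = v(\xi a + \eta b) \in v(I)$, which is exactly what we wanted.

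I would write this out in three short steps: (i) use surjectivity of $v^\circ$ together with \cref{ideals_are_downwards_closed_gen}/\eqref{eq:I and v(I)} to lift $x$ and $y$ to elements $a, b$ of $I$; (ii) invoke the B\'ezout condition to produce $\xi a + \eta b$ with norm $x+y$; (iii) observe $\xi a + \eta b \in I$ because $I$ is an ideal. The only subtlety worth spelling out is step (i): one must check that the chosen preimages can be taken inside $I$ and not merely inside $R^\circ$. This follows immediately from \cref{trop_determines_elements_of_ideal} applied with equality $v(b') = x$ for any $a' \in I$ with $v(a') = x$ — but in fact, since $x \in v(I)$, by definition there is already some element of $I$ with norm $x$, so no extra work is needed beyond unwinding the definition of $v(I)$. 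I do not anticipate a genuine obstacle here; the surjectivity hypothesis on $v^\circ$ is precisely what makes the lifting painless, and the B\'ezout axiom is tailor-made to close the set $v(I)$ under addition.
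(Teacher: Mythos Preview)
Your proof is correct and follows essentially the same approach as the paper: lift $x,y$ to elements of $I$ using surjectivity and \eqref{eq:I and v(I)}, apply the B\'ezout axiom to obtain an element of $I$ with norm $x+y$, and conclude. Your aside that surjectivity of $v^\circ$ is not strictly needed in step (i) (since $x\in v(I)$ already means some $a\in I$ has $v(a)=x$) is a fair observation, but otherwise the argument matches the paper's proof line for line.
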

\begin{proof}
As $v^\circ$ is surjective there exist $f,g\in R^\circ$ such that $v(f)=x$ and $v(g)=y$.
Moreover, by~\eqref{eq:I and v(I)} we have that $f,g\in I$.
As $v$ is B\'ezout there exist $\alpha,\beta\in R^\circ$ such that 
\[
v(\alpha f+\beta g)=v(f)+v(g)=x+y.
\]
Notice that $\alpha f+\beta g\in I $ as $I$ is an ideal and then again by~\eqref{eq:I and v(I)} we conclude $x+y\in v(I)$. 
\end{proof}

\cref{closed_under_addition_gen} implies that $v(I)$ is closed under summation, hence it is subtractive by Lemma~\ref{lem:substractive=downward closed}. 
Thus, if $I\subset R^{\circ}$ is an ideal, then $v(I)\subset S^\circ$ is a $k$-ideal.

\begin{theorem}[Correspondence theorem]\label{ideals_are_the_same}
Let $v : R \to S$ be a B\'ezout valuation such that $v^\circ:R^\circ\to S^\circ$ is surjective. 
There is a one-to-one correspondence between ideals $I\subset R^{\circ}$ and $k$-ideals $J \subset S^\circ$ given by $I=v^{-1}(J)$ and $J=v(I)$. This correspondence preserves prime ideals.
\end{theorem}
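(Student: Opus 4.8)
The plan is to verify directly that $I\mapsto v(I)$ and $J\mapsto v^{-1}(J)$ are well-defined maps between the posets of ideals of $R^\circ$ and of $k$-ideals of $S^\circ$, that they are mutually inverse, and that they match up prime ideals on the two sides; most ingredients are already in place. First I would note that if $J\subseteq S^\circ$ then automatically $v^{-1}(J)\subseteq R^\circ$, since $v(a)\in J\subseteq S^\circ$ forces $v(a)\le 1$. That $v(I)$ is a $k$-ideal of $S^\circ$ whenever $I$ is an ideal of $R^\circ$ has in effect been assembled in the discussion preceding the theorem: $v(I)$ contains $0=v(0)$, is downward closed by \cref{ideals_are_downwards_closed_gen}, is closed under multiplication by $S^\circ$ because $a\le 1$ implies $ab\le b$, and is closed under addition by \cref{closed_under_addition_gen}; downward closedness is the same as subtractivity by \cref{lem:substractive=downward closed}. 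Conversely, for a $k$-ideal $J\subseteq S^\circ$ I would check that $v^{-1}(J)$ is an ideal of $R^\circ$: it contains $0$; if $a,b\in v^{-1}(J)$ then $v(a+b)\le v(a)+v(b)$, which lies in $J$ because $S^\circ$ is a subsemiring and $J$ is an ideal, hence $v(a+b)\in J$ by downward closedness; and if $a\in v^{-1}(J)$, $c\in R^\circ$, then $v(ca)=v(c)v(a)\le v(a)\in J$, so $v(ca)\in J$ again.

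For bijectivity, given an ideal $I\subseteq R^\circ$, the equality $v^{-1}(v(I))=I$ is precisely \eqref{eq:I and v(I)}, which rests on \cref{trop_determines_elements_of_ideal}. Given a $k$-ideal $J\subseteq S^\circ$, the inclusion $v(v^{-1}(J))\subseteq J$ is immediate; for the reverse inclusion, given $x\in J$ I would use surjectivity of $v^\circ$ to find $a\in R^\circ$ with $v(a)=x$, so $a\in v^{-1}(J)$ and $x=v(a)\in v(v^{-1}(J))$. Both assignments visibly preserve inclusions, so this is an isomorphism of posets.

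It then remains to see that $I$ is prime if and only if $v(I)$ is. For properness, $1\in v(I)$ would give some $a\in I$ with $v(a)=1$, whence $(a)=R^\circ$ by \cref{ideals_are_downwards_closed_gen} and $I=R^\circ$; so $I\ne R^\circ$ iff $v(I)\ne S^\circ$. For the condition that the complement be multiplicatively closed, the bridge in both directions is multiplicativity $v(ab)=v(a)v(b)$ together with the bijection $v^{-1}(v(I))=I$: if $I$ is prime and $x,y\notin v(I)$, lift them to $a,b\in R^\circ$, note $a,b\notin I$, so $ab\notin I$, hence $v(ab)=xy\notin v(I)$; conversely, if $v(I)$ is prime and $ab\in I$, then $v(a)v(b)=v(ab)\in v(I)$, so $v(a)\in v(I)$ or $v(b)\in v(I)$, and pulling back along $v^{-1}(v(I))=I$ gives $a\in I$ or $b\in I$.

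I do not anticipate a genuine obstacle: each step is forced by the lemmas already established. The single point that needs care is the role of the hypothesis that $v^\circ$ is surjective --- it is used exactly twice, to recover $J$ from $v^{-1}(J)$ and to lift elements of $S^\circ$ in the primality argument, and nowhere else --- and, more pedantically, one must consistently work inside the subsemiring $S^\circ$, using that a sum or product of elements $\le 1$ is again $\le 1$, rather than in $S$.
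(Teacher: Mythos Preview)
Your proposal is correct and follows essentially the same approach as the paper's proof: the paper likewise relies on the lemmas preceding the theorem for the fact that $v(I)$ is a $k$-ideal, verifies that $v^{-1}(J)$ is an ideal via subadditivity and downward closedness, and reduces the prime-preservation statement to the equivalence $a\notin I \Leftrightarrow v(a)\notin v(I)$ together with surjectivity of $v^\circ$. Your write-up is in fact more explicit than the paper's (which is quite terse and leaves the verification of $v(v^{-1}(J))=J$ implicit), but there is no substantive difference in method.
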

\begin{proof}
The only thing left to verify is that $v^{-1}(J)$ is an ideal of $R^{\circ}$.
Let $f,g \in v^{-1}(J)=\{ g \in R^{\circ} : v(g) \in J\}$ and $r \in R^{\circ}$. Then $f+g \in v^{-1}(J)$ and $rf \in v^{-1}(J)$, since $v(f+g) \le v(f)+v(g) \in J$ and $v(rf) \le v(r)v(f) \in J$ and $J$ is downwards closed.

For the second statement, let $I$ and $J$ be two ideals such that $I=v^{-1}(J)$ and $v(I)=J$. Since $f\notin I$ if and only if $v(f)\notin J$ and $v: R^{\circ} \to S^{\circ}$ is surjective, the statement follows.
\end{proof}

\begin{remark}
As we pointed out before, the correspondence Theorem~\ref{ideals_are_the_same} is related to~\cite[Corollary 3.8]{BG}, the difference is that in our case, the direct image $v(I)\subset S^\circ$ of an ideal $I\subset R^\circ$ under the surjective valuation $v^\circ $ is already a $k$-ideal of $S^\circ$, and there is no need to take its $k$-closure.
\end{remark}

Although we will not need a generalization of the correspondence theorem to radical and primary ideals, we will give a proof here for the sake of completeness.

\begin{proposition}
\label{prop:preservation}
The correspondence induced by a B\'ezout valuation $v$ preserves radical and primary ideals.
\end{proposition}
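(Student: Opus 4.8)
The strategy is to leverage the correspondence established in Theorem~\ref{ideals_are_the_same}: since $v^{-1}$ and $v$ are mutually inverse bijections between ideals of $R^\circ$ and $k$-ideals of $S^\circ$, and since these maps behave well with respect to the lattice operations and membership, it suffices to translate the definitions of ``radical'' and ``primary'' back and forth across the correspondence. Throughout I will use freely that for $f,g\in R^\circ$ we have $v(fg)=v(f)v(g)$ (multiplicativity of the valuation) and that $v(I)$ is downward closed, so that $f\in I\iff v(f)\in v(I)$ whenever $I=v^{-1}(v(I))$.

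\textbf{Radical ideals.} First I would record the needed notion on the semiring side: call a $k$-ideal $J\subset S^\circ$ \emph{radical} if $x^n\in J$ implies $x\in J$. Given an ideal $I\subset R^\circ$ with $J=v(I)$, I claim $I$ is radical iff $J$ is radical. For the forward direction, suppose $I$ is radical and $x^n\in J$; since $v^\circ$ is surjective pick $f\in R^\circ$ with $v(f)=x$, so $v(f^n)=x^n\in J$, hence $f^n\in I$, hence $f\in I$, hence $x=v(f)\in J$. Conversely, if $J$ is radical and $f^n\in I$, then $v(f)^n=v(f^n)\in J$, so $v(f)\in J$, so $f\in v^{-1}(J)=I$. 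The same two-line argument, read in either direction, handles the correspondence $J=v(I)$.

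\textbf{Primary ideals.} Recall $I\subset R^\circ$ is primary if $fg\in I$ with $f\notin I$ implies $g^n\in I$ for some $n$; define a $k$-ideal $J\subset S^\circ$ to be \emph{primary} by the analogous condition $xy\in J$, $x\notin J$ $\Rightarrow$ $y^n\in J$ for some $n$. Suppose $I$ is primary with $J=v(I)$, and take $x,y\in S^\circ$ with $xy\in J$, $x\notin J$. Choose $f,g\in R^\circ$ with $v(f)=x$, $v(g)=y$; then $v(fg)=xy\in J$ so $fg\in I$, while $x\notin J$ gives $f\notin I$, so $g^n\in I$ for some $n$, whence $y^n=v(g^n)\in J$. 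Conversely, if $J$ is primary and $fg\in I$ with $f\notin I$, then $v(f)v(g)=v(fg)\in J$ and $v(f)\notin J$, so $v(g)^n\in J$ for some $n$, i.e. $v(g^n)\in J$, i.e. $g^n\in v^{-1}(J)=I$. This closes both directions.

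\textbf{Main obstacle.} The content is entirely routine once the correspondence theorem is in hand; the only point requiring a small amount of care is making sure that ``$f\notin I$'' translates to ``$v(f)\notin v(I)$'' and back --- but this is exactly the injectivity statement baked into Theorem~\ref{ideals_are_the_same} (equivalently~\eqref{eq:I and v(I)}), together with surjectivity of $v^\circ$ which lets us lift every semiring element to $R^\circ$. A secondary bookkeeping issue is that the definitions of radical and primary $k$-ideal on the semiring side must be stated explicitly (they are not in the excerpt), but no subtlety arises there since $S^\circ$ is an idempotent semiring in which powers and products behave formally as expected. Hence the proof is short and amounts to the four symmetric implications sketched above.
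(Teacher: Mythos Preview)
Your proof is correct. For primary ideals it is essentially the paper's argument verbatim: both you and the paper translate the defining condition across the bijection using multiplicativity of $v$ and the equivalence $f\in I\iff v(f)\in v(I)$.

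For radical ideals, however, you take a genuinely different and more elementary route. The paper argues structurally: it invokes the fact that in a commutative ring every radical ideal is an intersection of (minimal) prime ideals, then uses that the correspondence preserves primes (Theorem~\ref{ideals_are_the_same}) and minimality to conclude. Your argument instead works directly from the definition ``$x^n\in J\Rightarrow x\in J$'' and pushes this condition back and forth via $v(f^n)=v(f)^n$. This is shorter and entirely self-contained: it avoids the external citation, sidesteps the need to check that the correspondence commutes with arbitrary intersections, and does not require knowing (or proving) that radical $k$-ideals in $S^\circ$ admit a prime decomposition. The paper's approach, on the other hand, yields as a byproduct the auxiliary statement that minimal primes are preserved, which could be of independent interest but is not needed for the proposition as stated.
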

\begin{proof}
First, let us show the preservation of radical ideals. 
Any radical ideal can be written as intersection of minimal prime ideals, see e.g. \cite[Corollary 2.12]{Eisenbud_commutative}. 
Since $v$ preserves primes, it remains to show that minimal primes are preserved. 
For this purpose, let $I$ be a minimal prime ideal in $R^{\circ}$, i.e., for every prime ideal $J$ with $J \subseteq I$ it holds that $J=I$, and let $J' \subseteq v(I)$ be a prime ideal. Then there is a prime ideal $J$ such that $v(J)=J'$. Thus, $v(J) \subseteq v(I)$ and therefore $J \subseteq I$. 
By the minimality, $J=I$. 
The same argument can be used for $v^{-1}$ and minimal prime $k$-ideals in $S^{\circ}$ and the statement follows.

Now, let us show the preservation of primary ideals. 
Let $J\subset S^{\circ}$ be a primary $k$-ideal and let $I=v^{-1}(J)$. If $fg\in I$, then $v(fg)=v(f)v(g)\in J$. For $v(f)\notin J$, we have $f \notin I$. Then $v(g)^n=v(g^n)\in J$ and $g^n\in I$ for some $n \in \mathbb{N}$.

Conversely, let $I\subset R^{\circ}$ be a primary ideal and let $J=v(I)$. Let $xy\in J$ and $f \in v^{-1}(x), g \in v^{-1}(y)$ be arbitrary. Since $fg \in v^{-1}(xy) \subset I$, then $f \in I$ or $g^n \in I$ for some natural number $n$. 
Thus, $v(f) \in J$ or $v(g^n)=v(g)^n \in J$.
\end{proof}

The next result shows that for a surjective B\'ezout valuation $v:R\to S$, the unit ball $R^\circ$ is B\'ezout and $v$ is characterized by the norm $\pi:R^\circ\to \Gamma(R^\circ).$

\begin{lemma}
\label{cor:structure}
Let $v : R \to S$ be a surjective B\'ezout valuation. Then $v$ is isomorphic to the norm $\Frac(\pi):\Frac(R^\circ)\to \Frac(\Gamma(R^\circ))$.
\end{lemma}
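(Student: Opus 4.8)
The plan is to proceed in three stages: first establish that $R^\circ$ is a Bézout domain; then identify $\Frac(R^\circ)$ with $R$ and $\Frac(\Gamma(R^\circ))$ with $S$; and finally check that under these identifications the norm $\Frac(\pi)$ becomes $v$, i.e.\ exhibit a commuting square with vertical isomorphisms.

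For the first stage, since $R^\circ$ is a subring of the field $R$ it is a domain, so it suffices to show that $(a,b)$ is principal for arbitrary $a,b\in R^\circ$, the general finitely generated case following by induction and the case $ab=0$ being trivial. As $v$ is Bézout there are $x,y\in R^\circ$ with $v(xa+yb)=v(a)+v(b)$; put $c=xa+yb\in(a,b)$. Idempotency of $S$ gives $v(a)\le v(a)+v(b)=v(c)$ and $v(b)\le v(c)$, so Lemma~\ref{ideals_are_downwards_closed_gen} yields $a,b\in(c)$, and combined with $c\in(a,b)$ this forces $(a,b)=(c)$. In particular $v(\gcd(a,b))=v(a)+v(b)$ for all $a,b\in R^\circ$, a fact I will reuse below.

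For the second stage, I first note that $v^\circ:R^\circ\to S^\circ$ is surjective, since $v(r)=s\le 1$ forces $r\in R^\circ$. The natural map $\Frac(R^\circ)\to R$ is injective as $R$ is a field; for surjectivity I would take a nonzero $a\in R$ and apply the Bézout property to the pair $(a,1)$, obtaining $x,y\in R^\circ$ such that $d:=xa+y$ satisfies $v(d)=v(a)+1$. Then $v(d)\ne 0$, so $d\ne 0$, and from $v(a)\le v(d)$, $1\le v(d)$ and multiplicativity of $v$ one gets $a/d\in R^\circ$ and $1/d\in R^\circ\setminus\{0\}$, whence $a=(a/d)/(1/d)\in\Frac(R^\circ)$; thus the inclusion is an isomorphism $\Frac(R^\circ)\xrightarrow{\sim}R$. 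Next, Lemma~\ref{ideals_are_downwards_closed_gen} gives $(a)\subseteq(b)\iff v(a)\le v(b)$ for $a,b\in R^\circ$, so that $v(a)=v(b)\iff(a)=(b)\iff[a]=[b]$ in $\Gamma(R^\circ)$, and therefore $v^\circ$ descends to a bijection $\bar v\colon\Gamma(R^\circ)\to S^\circ$. This $\bar v$ is a semiring homomorphism: multiplicativity is multiplicativity of $v$, and additivity is precisely the identity $v(\gcd(a,b))=v(a)+v(b)$ from the first stage. Hence $\bar v$ is a semiring isomorphism and, both semirings being multiplicatively cancellative, it extends to an isomorphism $\Frac(\Gamma(R^\circ))\xrightarrow{\sim}\Frac(S^\circ)$. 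Finally I would check $\Frac(S^\circ)=S$: for nonzero $s\in S$ surjectivity of $v$ gives $s=v(a)$, and with $d$ as above $s=v(a/d)/v(1/d)$ with $v(a/d),v(1/d)\in S^\circ$ and $v(1/d)\ne 0$.

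For the third stage I would verify that the square with horizontal arrows $\Frac(\pi)$ and $v$ and vertical arrows the two isomorphisms just constructed commutes: for $a,b\in R^\circ$ with $b\ne 0$, the class $a/b$ goes to $v(a)/v(b)$ along $\Frac(R^\circ)\xrightarrow{\sim}R\xrightarrow{v}S$, and to $\bar v([a])/\bar v([b])=v(a)/v(b)$ along $\Frac(R^\circ)\xrightarrow{\Frac(\pi)}\Frac(\Gamma(R^\circ))\xrightarrow{\sim}\Frac(S^\circ)=S$, so the square commutes and $v$ is isomorphic to $\Frac(\pi)$. The Bézout hypothesis enters only twice — once to make $R^\circ$ Bézout (with the bonus identity $v(\gcd(a,b))=v(a)+v(b)$) and once for the surjectivity $\Frac(R^\circ)=R$ via the auxiliary element $d=xa+y$ dominating both $a$ and $1$ — and I expect the latter to be the main obstacle; everything else reduces to manipulations with the order on $S$ and to Lemma~\ref{ideals_are_downwards_closed_gen}.
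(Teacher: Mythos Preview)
Your argument is correct and has the same overall architecture as the paper's proof --- identify $\Frac(R^\circ)$ with $R$, identify $\Gamma(R^\circ)$ with $S^\circ$, identify $\Frac(S^\circ)$ with $S$, and check the square commutes --- but the individual steps are carried out differently. Where the paper cites \cite[Proposition~1]{RY} for both the B\'ezoutness of $R^\circ$ and the equality $\Frac(R^\circ)=R$, you prove them directly from Definition~\ref{def:Bezout_snorm} together with Lemma~\ref{ideals_are_downwards_closed_gen}; your trick of applying the B\'ezout condition to the pair $(a,1)$ to produce a common denominator $d$ with $v(d)=v(a)+1$ is a neat replacement for the external reference. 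Likewise, the paper obtains the isomorphism $\Gamma(R^\circ)\cong S^\circ$ by passing through the correspondence Theorem~\ref{ideals_are_the_same} and the identification $S^\circ\cong\fgId_k(S^\circ)$, whereas you read it off immediately from $(a)=(b)\Leftrightarrow v(a)=v(b)$ plus your gcd identity $v(\gcd(a,b))=v(a)+v(b)$. The paper's one genuine shortcut over your version is its proof that $S=\Frac(S^\circ)$: rather than routing through $v$ and the element $d$, it simply writes any $x\in S$ as $\dfrac{x/(1+x)}{1/(1+x)}$, which needs no surjectivity at all. In sum, your proof is more self-contained (no appeal to \cite{RY} or to Theorem~\ref{ideals_are_the_same}) at the cost of a little extra length, while the paper's is shorter but leans on outside machinery.
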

\begin{proof}
First we show that $S=\Frac(S^\circ)$. Given some $x \in S$ we want to write it as $x = a/b$ with $a, b \leq 1$. For this we just set $a = x/(1+x)$ and $b = 1/(1+x)$. Since $x \leq 1 + x$ and $1 \leq 1 + x$ we have $a, b \leq 1$ and also clearly $x = a/b$. So $S^\circ\subseteq S \subseteq \Frac(S^\circ)$, and the inclusion $\Frac(S^\circ) \subseteq S$ follows from the fact that $S$ is a semifield.

Assume that $v$ is surjective. Then for $x\in S^\circ$ there exists $a\in R$ such that $v(a)=x$, but $a\in R^{\circ}$ by definition. Thus, $v^\circ$ is surjective.

Since $\Frac(R^\circ)=R$~\cite[Proposition 1]{RY}, there are $a,b\in R^\circ$ such that $x=\tfrac{a}{b}$. So $v(x)=\tfrac{v(a)}{v(b)}=\tfrac{v^\circ(a)}{v^\circ(b)}$, and $v:R\to S$ is the extension $\Frac(v^\circ)$ of $v^{\circ}:R^{\circ}\to S^{\circ}$.

If $v:R\to S$ is a B\'ezout valuation, then $R^\circ$ is a B\'ezout domain by~\cite[Proposition 1]{RY}. Then every finitely generated ideal of $R^{\circ}$ is principal, and the isomorphism $\Id(R^\circ)\cong \Id_k(S^\circ)$ of Theorem \ref{ideals_are_the_same} restricts to a isomorphism $\Gamma(R^\circ)\cong\fgId(R^\circ)\cong \fgId_k(S^\circ)$ sending $(a)$ to $v^{\circ}((a))$. Note that $v^{\circ}((a))=(v^{\circ}(a))_k$: the $k$-closure of the principal ideal generated by $v^{\circ}(a)$.

We now make the identification $S^\circ\cong \fgId_k(S^\circ)$ by sending $s$ to $(s)_k$. 
This is clearly surjective, and it is injective since $(s)_k=(t)_k$ implies $s\leq t$ and $t\leq s$, so $t=s$.
Thus $\phi:\Gamma(R^\circ) \to S^\circ$ given by $\phi((a))=v^{\circ}(a)$ is an isomorphism satisfying $v^\circ=\phi\circ\pi$, and $v=\Frac(v^\circ)=\Frac(\phi)\circ \Frac(\pi)$.
\end{proof}

\begin{corollary}
\label{cor:surjectivity}
Let $v : R \to S$ be a B\'ezout valuation. Then $v$ is surjective if and only if $v^\circ$ is surjective.
\end{corollary}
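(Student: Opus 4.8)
The plan is to extract both directions from the preceding work, since almost everything needed is already in place. For the forward implication, suppose $v:R\to S$ is a surjective B\'ezout valuation. This is exactly the hypothesis under which the second paragraph of the proof of Lemma~\ref{cor:structure} operates: given $x\in S^\circ\subseteq S$, surjectivity of $v$ produces $a\in R$ with $v(a)=x$, and then the defining inequality $v(a)=x\le 1$ forces $a\in R^\circ$; hence $v^\circ(a)=x$ and $v^\circ$ is surjective. So this direction is a one-line consequence of a step already carried out above, and I would simply cite that step (or reproduce the two sentences).

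For the converse, assume $v^\circ:R^\circ\to S^\circ$ is surjective and let $x\in S$ be arbitrary. The first paragraph of the proof of Lemma~\ref{cor:structure} shows — using only that $S$ is a semifield, not surjectivity of $v$ — that $S=\Frac(S^\circ)$; concretely, writing $a=x/(1+x)$ and $b=1/(1+x)$ gives $a,b\in S^\circ$ with $x=a/b$. Now apply surjectivity of $v^\circ$ to find $f,g\in R^\circ$ with $v^\circ(f)=v(f)=a$ and $v^\circ(g)=v(g)=b$; note $g\ne 0$ since $v(g)=b\ne 0$ ($b$ is invertible in the semifield $S$). Since $v$ is multiplicative and $R$ is a field, $v(f/g)=v(f)/v(g)=a/b=x$, so $x\in \operatorname{im}(v)$, i.e.\ $v$ is surjective.

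The only point requiring a moment's care is the well-definedness of $f/g$, i.e.\ that $g\ne 0$: this follows because $v$ is a norm (a B\'ezout seminorm on a field is automatically a norm, as noted after Definition~\ref{def:Bezout_snorm}) and $v(g)=b$ is a unit of the semifield $S$, hence nonzero, so $g\ne 0$. There is no real obstacle here — the corollary is essentially a repackaging of the two opening paragraphs of the proof of Lemma~\ref{cor:structure}, isolated as a standalone equivalence — so I would keep the write-up to a short paragraph for each direction, referencing Lemma~\ref{cor:structure} rather than re-deriving $S=\Frac(S^\circ)$.
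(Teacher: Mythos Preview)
Your proposal is correct and follows essentially the same approach as the paper: one direction is quoted from the proof of Lemma~\ref{cor:structure}, and the other uses $S=\Frac(S^\circ)$ to write an arbitrary $x\in S$ as a quotient of elements of $S^\circ$ and then lifts each via the surjectivity of $v^\circ$. Your write-up is slightly more careful (you justify $g\ne 0$), but the argument is the same.
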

\begin{proof}
Let $v^\circ$ be surjective. 
Since $S=\Frac(S^\circ)$ (see proof of \cref{cor:structure}), for any $y\in S$ there are $y_1,y_2\in S^\circ$ such that $y=\tfrac{y_1}{y_2}$, and since $v^\circ$ is surjective, there exists $a_1,a_2\in R$ such that $v(\tfrac{a_1}{a_2})=\tfrac{v^\circ(a_1)}{v^\circ(a_2)}=\tfrac{y_1}{y_2}=y$. Thus, $v$ is surjective.

The reverse direction is proven in \cref{cor:structure}.
\end{proof}

\begin{corollary}
\label{cor:unit_ball}
Let $v : R \to S$ be a surjective B\'ezout valuation. Then $U(R^\circ)=\{x\in R\::\: v(x)=1\}$.
\end{corollary}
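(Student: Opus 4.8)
The plan is to prove the asserted equality by two inclusions, working throughout in the idempotent semifield $S$ and exploiting two elementary facts about its canonical order: it is antisymmetric (if $a+b=b$ and $b+a=a$ then $a=b$), and it is preserved by multiplication (if $a+b=b$ then $ac+bc=bc$). Neither of these requires $S$ to be totally ordered.

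For the inclusion $U(R^\circ)\subseteq\{x\in R:v(x)=1\}$, I would take $x\in U(R^\circ)$ with inverse $y\in R^\circ$ and apply multiplicativity of $v$ to get $v(x)v(y)=v(xy)=v(1)=1$. Since $x,y\in R^\circ$ we have $v(x)\le 1$ and $v(y)\le 1$; multiplying the first inequality by $v(y)$ gives $1=v(x)v(y)\le v(y)$, so by antisymmetry $v(y)=1$, and hence $v(x)=v(y)^{-1}=1$ in the semifield $S$. Note that this inclusion uses only that $v$ is a multiplicative norm into a semifield, and neither surjectivity nor the B\'ezout hypothesis.

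For the reverse inclusion, suppose $x\in R$ satisfies $v(x)=1$. Then $v(x)\le 1$ forces $x\in R^\circ$, and $x\neq 0$ because $v(0)=0\neq 1$; since $R$ is a field, $x^{-1}$ exists in $R$, and multiplicativity gives $v(x^{-1})v(x)=v(1)=1$, whence $v(x^{-1})=1\le 1$, so $x^{-1}\in R^\circ$. Thus $x\in U(R^\circ)$. Alternatively, this can be read off \cref{cor:structure}: there $v^\circ$ is shown to factor as $\phi\circ\pi$ with $\phi$ an isomorphism of semirings and $\pi:R^\circ\to\Gamma(R^\circ)=R^\circ/U(R^\circ)$ the quotient valuation, so $v^\circ(a)=1$ precisely when $\pi(a)$ is the identity of $\Gamma(R^\circ)$, that is, when $a\in U(R^\circ)$; combined with the observation that $v(x)=1$ already places $x$ in $R^\circ$, this yields the statement.

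I do not anticipate a genuine obstacle. The only points requiring care are replacing ``totally ordered'' intuition by the partial-order facts recalled above, so that $v(x)\le 1$, $v(y)\le 1$ and $v(x)v(y)=1$ really do force $v(x)=v(y)=1$, and remembering that the reverse inclusion truly relies on $R$ being a field (equivalently, on the standing surjectivity set-up of this part of the section), since that is what produces the inverse $x^{-1}$ before one checks that it lies in the unit ball.
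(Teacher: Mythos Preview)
Your proof is correct. Both inclusions are argued cleanly, and your care about replacing total-order intuition with the partial-order antisymmetry and compatibility with multiplication is exactly right.

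Compared with the paper's proof, your argument is slightly more direct and uses less of the hypotheses. For the inclusion $U(R^\circ)\subseteq\{v(x)=1\}$, the paper observes that $v(x)$ is invertible in $S^\circ$ and then invokes $U(S^\circ)=\{1\}$ (because $S^\circ$ is ``simple and thus sharp''); you unpack this same fact explicitly via the order. For the reverse inclusion, the paper appeals to surjectivity and the identity $\Frac(R^\circ)=R$ to write $x=y/z$ with $y,z\in R^\circ$ before inverting, whereas you simply use the standing assumption that $R$ is a field to produce $x^{-1}$ directly and check $v(x^{-1})=1$. The upshot is that your argument shows the corollary holds for any valuation from a field into a semifield, without invoking either the B\'ezout condition or surjectivity; the paper's route ties the statement back into the ambient \cref{cor:structure} machinery but does not actually need it. One small quibble: your parenthetical equating ``$R$ is a field'' with ``the standing surjectivity set-up'' is not quite accurate, since $R$ being a field is a blanket assumption in this subsection independent of surjectivity.
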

\begin{proof}
If $x\in R$ satisfies $v(x)=1$, then $x\in R^\circ$. Since $v$ is surjective, we have that $\Frac(R^\circ)=R$, so there exist $y,z\in R^\circ$ such that $x=\tfrac{y}{z}\neq0$, then  $v(y)=v(z)$, and $x^{-1}=\tfrac{z}{y}\in R^\circ$.

Conversely, if  $x\in U(R^\circ)$, then $v(x)$ is invertible in $S^\circ$ (since $v$ is multiplicative), but $U(S^\circ)=\{1\}$  since it is simple (and thus sharp).
\end{proof}

In the next section we apply these results to a concrete example of a B\'ezout valuation.

\subsection{Valuations in the context of formal power series rings}
\label{Sect_valuationsFormalPowerSeriesRings}
Let $\mathbb B=\{0,1\}$ denote the Boolean idempotent semifield. Fix an integer $m\ge 1$ and a tuple of variables $\mathbf{t}=(t_1,\dots,t_m)$. We denote by $\mathbb{B}[\![\mathbf{t}]\!]$ the semiring of formal Boolean power series, and by 
$V\mathbb{B}[\mathbf{t}]$ the idempotent semiring of vertex polynomials as in~\cite{CGL}. 

\begin{remark}
Recall that $\mathbb{B}[\![\mathbf{t}]\!]$ is isomorphic to $\mathcal{P}(\mathbb{N}^m)$ and $V\mathbb{B}[\mathbf{t}]$ is isomorphic to $\mathbb{T}_m$ from~\cite{FGH20} with the isomorphism given by taking supports.
\end{remark}

More precisely, the elements of $V\mathbb{B}[\mathbf{t}]$ are subsets of $\mathbb N^m$ that are equal to the vertices of the Newton polyhedra they generate. The addition $\oplus$ on $V\mathbb{B}[\mathbf{t}]$ is given by taking the set union of two vertex polynomials and then projecting onto the vertices of the outcome. Similarly, the product $\odot$ on $V\mathbb{B}[\mathbf{t}]$ is given by taking the Minkowski sum of the vertex polynomials and then projecting onto the vertices of the outcome. 

Since $V\mathbb{B}[\mathbf{t}]$ is integral (i.e. multiplicatively cancellative, that is whenever $a\odot b=a\odot c$ then either $a=0$ or $b=c$), we can construct the fraction semifield $V\mathbb{B}(\mathbf{t}):=\text{Frac}(V\mathbb{B}[\mathbf{t}])$ as follows: 
the elements of $V\mathbb{B}(\mathbf{t})$ are of the form $\frac{a}{b}$ with $a,b\in V\mathbb{B}[\mathbf{t}]$ and $b\neq0$, the map $V\mathbb{B}[\mathbf{t}]\xrightarrow[]{} V\mathbb{B}(\mathbf{t})$ sending $a$ to $\frac{a}{1}$ is an embedding, and $\frac{a}{b}=\frac{c}{d}$ if and only if $a \odot d=b \odot c$. 
The sum and product of fractions are defined as usual: $\frac{a}{b}\odot\frac{c}{d}=\frac{a\odot c}{b\odot d}$ and $\frac{a}{b} \oplus \frac{c}{d}=\frac{a \odot d\oplus b\odot c}{b\odot d}$. Note that on $V\mathbb{B}(\mathbf{t})$ we have the order $\frac{a}{b}\leq\frac{c}{d}$ if and only if $a\odot d\leq b\odot c$.

Let $K[\![\mathbf{t}]\!]$ be the ring of formal power series in the variables $\mathbf{t}=(t_1,\ldots,t_m)$ with coefficients in the field $K$. We define the {tropical seminorm}
\begin{equation}\label{eq: def tropical seminorm}
\trop:K[\![\mathbf{t}]\!]\rightarrow V\mathbb{B}[\mathbf{t}]    
\end{equation}
as the composition of the maps $\Supp: K[\![\mathbf{t}]\!]\rightarrow \mathbb{B}[\![\mathbf{t}]\!]$ given by taking the support set of a power series, composed with
$V:\mathbb{B}[\![\mathbf{t}]\!]\rightarrow V\mathbb{B}[\mathbf{t}]$ given by projecting onto the vertex set of the Newton polyhedron generated by an element of $\mathbb{B}[\![\mathbf{t}]\!]$. The tropical seminorm is a surjective  valuation, so we will call it the tropical valuation.

Recall that the map $\Frac(\trop):K(\!(\mathbf{t})\!)\rightarrow V\mathbb{B}(\mathbf{t})$ is defined by $\Frac(\trop)(\frac{\varphi}{\psi}):=\frac{\trop(\varphi)}{\trop(\psi)}$. This map is also a surjective valuation by \cite[Corollary 7.3]{CGL}, thus we will call it also the tropical valuation and will  be denoted by $\trop$. Moreover, we have:

\begin{proposition}
\label{Proposition_trop_is_Bezout}
The tropical valuation $\trop: K(\!(\mathbf{t})\!)\to V\mathbb{B}(\mathbf{t})$  is a surjective $K$-algebra B\'ezout valuation.
\end{proposition}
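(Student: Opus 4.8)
The plan is to verify the B\'ezout condition of Definition~\ref{def:Bezout_snorm} directly for $\trop$ on $K(\!(\mathbf{t})\!)$, since surjectivity and multiplicativity are already recorded (the former from \cite[Corollary 7.3]{CGL}, the latter because $\trop$ is a valuation), and being a $K$-algebra homomorphism in the relevant sense is immediate because $\trop(c)=1$ for every nonzero $c\in K$ and $\trop$ commutes with scalar multiplication. So the heart of the matter is: given $\varphi,\psi\in K(\!(\mathbf{t})\!)$, produce $x,y\in K(\!(\mathbf{t})\!)^{\circ}$ with $\trop(x\varphi+y\psi)=\trop(\varphi)\oplus\trop(\psi)$.

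First I would reduce to the integral (power series) level. Writing $\varphi=\tfrac{a}{c}$, $\psi=\tfrac{b}{c}$ over a common denominator $c\in K[\![\mathbf{t}]\!]$, it suffices to find $x,y\in K(\!(\mathbf{t})\!)^{\circ}$ with $\trop(xa+yb)=\trop(a)\oplus\trop(b)$, because dividing through by $\trop(c)$ then gives the statement for $\varphi,\psi$ (using multiplicativity of $\trop$ and that $V\mathbb{B}(\mathbf{t})$ is a semifield). Now I would try to take $x,y\in K[\![\mathbf{t}]\!]$ outright. The target $\trop(a)\oplus\trop(b)$ is the vertex set of the Newton polyhedron of $\Supp(a)\cup\Supp(b)$; since $a+b$ already has support contained in $\Supp(a)\cup\Supp(b)$, the naive choice $x=y=1$ fails only because of cancellation — some exponents of $\Supp(a)\cap\Supp(b)$, possibly including vertices, may disappear in $a+b$. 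The fix is the classical trick of a generic $K$-linear combination: choose $\lambda\in K^{\times}$ so that no vertex monomial of the Newton polyhedron of $\Supp(a)\cup\Supp(b)$ cancels in $a+\lambda b$. Concretely, for each exponent $J$ that is a vertex of $\mathrm{Newt}(\Supp(a)\cup\Supp(b))$, the coefficient of $\mathbf{t}^{J}$ in $a+\lambda b$ is $a_J+\lambda b_J$, and this is nonzero for all but finitely many $\lambda$ (one bad value of $\lambda$ per vertex $J$ with $b_J\neq 0$, and none when $b_J=0$); since $K$ is infinite (characteristic zero), a good $\lambda$ exists. With such a $\lambda$, every vertex of the union survives, so $\mathrm{Newt}(\Supp(a+\lambda b))\supseteq\mathrm{Newt}(\Supp(a)\cup\Supp(b))$, while $\Supp(a+\lambda b)\subseteq\Supp(a)\cup\Supp(b)$ gives the reverse inclusion of Newton polyhedra; hence the vertex sets agree, i.e. $\trop(a+\lambda b)=\trop(a)\oplus\trop(b)$. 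Then $x=1$ and $y=\lambda$ work, and both lie in $K[\![\mathbf{t}]\!]\subseteq K(\!(\mathbf{t})\!)^{\circ}$ since $\trop(1)=\trop(\lambda)=1$.

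I would also dispose of the degenerate cases up front: if $a=0$ then $\trop(a)=0$ and $x=0$, $y=1$ works (and symmetrically for $b=0$), so the argument above may assume $a,b\neq 0$, guaranteeing the Newton polyhedron of the union is nonempty and has at least one vertex. The main obstacle, and really the only substantive point, is the claim that a single scalar $\lambda$ can simultaneously prevent cancellation at \emph{all} the vertex exponents; this is where one uses that the vertex set of a Newton polyhedron generated by a (possibly infinite) subset of $\mathbb{N}^{m}$ is finite — a fact implicit in the definition of $V\mathbb{B}[\mathbf{t}]$ as consisting of \emph{vertex polynomials}, i.e. finite sets — together with $|K|=\infty$. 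Once that finiteness is in hand, the avoidance of finitely many bad values of $\lambda$ is routine, and the Newton-polyhedron bookkeeping (support of a sum is contained in the union of supports; adding vertices to the generating set does not change the polyhedron) is standard. I would close by remarking that this mirrors, and slightly simplifies, the classical proof that a Krull valuation on a field with infinite residue field extends the B\'ezout-type identity, the simplification being that here we may take $x,y$ to be constants.
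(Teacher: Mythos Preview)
Your proposal is correct and follows essentially the same approach as the paper: reduce to the integral case via a common denominator, then kill the finitely many vertex cancellations by a generic scalar from the (infinite) ground field, taking $x=1$ and $y$ equal to that scalar. The only cosmetic differences are that the paper cross-multiplies (working with $\varphi_1\psi_2$ and $\varphi_2\psi_1$) rather than clearing a common denominator, and it picks the scalar from $\mathbb{N}\hookrightarrow K$ rather than from $K^{\times}$; neither changes the substance of the argument.
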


\begin{proof}
Let $\varphi=\sum_Ia_I\mathbf{t}^I$ and $\psi=\sum_Ib_I\mathbf{t}^I$ be nonzero elements in $ K[\![\mathbf{t}]\!]$. If $\trop(\varphi+\psi)\neq\trop(\varphi)\oplus\trop(\psi)$, there exists $I\in \trop(\varphi)\oplus\trop(\psi)$ such that $a_I+b_I=0$.  Since $\trop(\varphi)\oplus\trop(\psi)$ is a polynomial, we can choose $M\in\mathbb{N}$ such that $a_I+Mb_I\neq0$ for all $I\in \trop(\varphi)\oplus\trop(\psi)$. It follows that $\trop(\varphi+M\mathbf{t}^0\psi)=\trop(\varphi)\oplus\trop(\psi)$ and $\trop(M\mathbf{t}^0)=1$.

Now consider $\frac{\varphi_1}{\varphi_2}$ and $\frac{\psi_1}{\psi_2}$ elements in $K(\!(\mathbf{t})\!)^*$. We apply the above argument to $\varphi=\varphi_1\psi_2$ and $\psi=\varphi_2\psi_1$ to find 

\begin{equation*}
\trop\left(\frac{\varphi_1}{\varphi_2}+M\mathbf{t}^0\frac{\psi_1}{\psi_2}\right)
=\frac{\trop(\varphi+M\mathbf{t}^0\psi)}{\trop(\varphi_2\psi_2)}
=\trop\left(\frac{\varphi_1}{\varphi_2}\right)\oplus\trop \left(\frac{\psi_1}{\psi_2}\right),
\end{equation*}
which finishes the proof.
\end{proof}

The following result gives a concrete characterization of the tropical valuation $\trop:K(\!(\mathbf{t})\!)\to V\mathbb{B}(\mathbf{t})$.

\begin{corollary}
Let $\trop^\circ:K(\!(\mathbf{t})\!)^\circ\to V\mathbb{B}(\mathbf{t})^\circ$ be the induced integral valuation. Then $V\mathbb{B}(\mathbf{t})^\circ\cong \{\trop(x)\leq 1\}/\{\trop(x)=1\}$, and $\trop^\circ$ is isomorphic to the resulting quotient projection.
\end{corollary}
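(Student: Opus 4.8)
The plan is to deduce this directly from the structural results already established for surjective B\'ecout valuations, specialized to $v=\trop$. By \cref{Proposition_trop_is_Bezout}, $\trop:K(\!(\mathbf{t})\!)\to V\mathbb{B}(\mathbf{t})$ is a surjective B\'ezout valuation, so \cref{cor:surjectivity} guarantees that the induced map $\trop^\circ:K(\!(\mathbf{t})\!)^\circ\to V\mathbb{B}(\mathbf{t})^\circ$ is surjective as well. Hence all the hypotheses of \cref{cor:structure} are met, and I would apply it with $R=K(\!(\mathbf{t})\!)$, $R^\circ=K(\!(\mathbf{t})\!)^\circ$, $S=V\mathbb{B}(\mathbf{t})$ and $S^\circ=V\mathbb{B}(\mathbf{t})^\circ$.

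First I would recall the isomorphism produced in the proof of \cref{cor:structure}: there is an isomorphism of semirings $\phi:\Gamma(R^\circ)\to S^\circ$ determined by $\phi([a])=\trop^\circ(a)$ for $a\in R^\circ$, and it satisfies $\trop^\circ=\phi\circ\pi$, where $\pi:R^\circ\to \Gamma(R^\circ)=R^\circ/U(R^\circ)$ is the quotient projection. This already identifies $V\mathbb{B}(\mathbf{t})^\circ$ with $\Gamma(K(\!(\mathbf{t})\!)^\circ)$ and exhibits $\trop^\circ$ as (isomorphic to) the quotient projection $\pi$.

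It then remains to rewrite $\Gamma(K(\!(\mathbf{t})\!)^\circ)=K(\!(\mathbf{t})\!)^\circ/U(K(\!(\mathbf{t})\!)^\circ)$ in the form claimed. By definition of the unit ball, $K(\!(\mathbf{t})\!)^\circ=\{x\in K(\!(\mathbf{t})\!):\trop(x)\le 1\}$; and by \cref{cor:unit_ball}, applied to the surjective B\'ezout valuation $\trop$, the group of units is $U(K(\!(\mathbf{t})\!)^\circ)=\{x\in K(\!(\mathbf{t})\!):\trop(x)=1\}$. Substituting these two descriptions gives $\Gamma(K(\!(\mathbf{t})\!)^\circ)=\{\trop(x)\le 1\}/\{\trop(x)=1\}$, and transporting along the isomorphism $\phi$ yields $V\mathbb{B}(\mathbf{t})^\circ$ together with the statement that $\trop^\circ$ corresponds to the quotient projection, as desired.

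Since every ingredient is a previously proven statement, there is no serious obstacle; the only point requiring a little care is that the quotient $\{\trop(x)\le 1\}/\{\trop(x)=1\}$ must be understood in the category of semirings, so one has to recognize $\{\trop(x)=1\}$ as the multiplicative unit group $U(K(\!(\mathbf{t})\!)^\circ)$ rather than as a mere subset — which is exactly the content of \cref{cor:unit_ball} — and to trace the identity $\trop^\circ=\phi\circ\pi$ through the proof of \cref{cor:structure} so that the isomorphism matches the \emph{maps}, not just the underlying semirings.
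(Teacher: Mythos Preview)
Your proof is correct and follows essentially the same line as the paper's: invoke \cref{Proposition_trop_is_Bezout} to have a surjective B\'ezout valuation, apply \cref{cor:structure} to obtain $V\mathbb{B}(\mathbf{t})^\circ\cong\Gamma(K(\!(\mathbf{t})\!)^\circ)$ with $\trop^\circ$ corresponding to the quotient projection, and then use \cref{cor:unit_ball} to identify $U(K(\!(\mathbf{t})\!)^\circ)=\{\trop(x)=1\}$. Your extra appeal to \cref{cor:surjectivity} is harmless but redundant, since \cref{cor:structure} already works directly from surjectivity of $\trop$.
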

\begin{proof}
Since $\trop: K(\!(\mathbf{t})\!)\to V\mathbb{B}(\mathbf{t})$ is a surjective B\'ezout seminorm, by \cref{Proposition_trop_is_Bezout} and \cref{cor:structure}, it follows that 
\[
V\mathbb{B}(\mathbf{t})^\circ\cong\Gamma(K(\!(\mathbf{t})\!)^\circ)=K(\!(\mathbf{t})\!)^\circ/U(K(\!(\mathbf{t})\!)^\circ),
\]
and $U(K(\!(\mathbf{t})\!)^\circ)=\{x\in K(\!(\mathbf{t})\!)\::\:\trop(x)=1\}$ by Corollary~\ref{cor:unit_ball}.
\end{proof}

The correspondence Theorem \ref{ideals_are_the_same} says that we have a semiring isomorphism  
\begin{equation}
    \Id(K(\!(\mathbf{t})\!)^\circ)\cong \Id_k(V\mathbb{B}(\mathbf{t})^\circ)\cong \Id_k(K(\!(\mathbf{t})\!)^\circ/\{\trop(x)=1\}).
\end{equation}

We use this correspondence to characterize the maximal $k$-ideals of the semiring $V\mathbb{B}(\mathbf{t})^\circ$ in Corollary \ref{cor:max:ideals:unitball}.

\begin{example}
\label{ex:case_m=1.1}
Suppose that $m=1$. Since for $f \in K[\![t]\!]$ it holds that $\trop(f)=\min(\Supp(f))$, we have that
\begin{align*}
K(\!(t)\!)^\circ &= \{ f/g \in K(\!(t)\!) : \min(\min(\Supp(f)),\min(\Supp(g))) = \min(\Supp(g)) \} \\ &= \{ f/g \in K(\!(t)\!) : \min(\Supp(f)) \ge \min(\Supp(g)) \} =K[\![t]\!],
\end{align*}
which is Noetherian.
\end{example}

\begin{proposition}\label{Proposition_nonNoetherian}
For $m>1$, the ring $K(\!(\mathbf{t})\!)^\circ$ is not Noetherian.
\end{proposition}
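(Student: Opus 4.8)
The plan is to violate the ascending chain condition by writing down an explicit, never‑stabilizing, strictly increasing chain of \emph{principal} ideals of $K(\!(\mathbf{t})\!)^{\circ}$. Since $m>1$, there are at least two variables $t_{1},t_{2}$ at our disposal, and for each integer $k\geq 1$ I would put
\[
g_{k}\ :=\ \frac{t_{1}}{\,t_{1}+t_{2}^{\,k}\,}\ \in\ K(\!(\mathbf{t})\!)^{*}
\]
and claim that $(g_{1})\subsetneq(g_{2})\subsetneq(g_{3})\subsetneq\cdots$ as ideals of $K(\!(\mathbf{t})\!)^{\circ}$, which at once yields the proposition.

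The first step is to translate membership in $K(\!(\mathbf{t})\!)^{\circ}$ into containments of Newton polyhedra. For nonzero $\varphi,\psi\in K[\![\mathbf{t}]\!]$ one has $\varphi/\psi\in K(\!(\mathbf{t})\!)^{\circ}$ iff $\trop(\varphi)\le\trop(\psi)$ in $V\mathbb{B}(\mathbf{t})$, and unwinding the definitions of the order and of $\oplus$ on $V\mathbb{B}[\mathbf{t}]$ this amounts to the inclusion $\mathrm{Newt}(\varphi)\subseteq\mathrm{Newt}(\psi)$, where $\mathrm{Newt}(\varphi):=\mathrm{conv}(\Supp\varphi)+\mathbb{R}_{\ge 0}^{m}$; equivalently, $\varphi/\psi\in K(\!(\mathbf{t})\!)^{\circ}$ iff every exponent appearing in $\varphi$ lies in $\mathrm{Newt}(\psi)$ (this is the higher‑dimensional analogue of the computation in \cref{ex:case_m=1.1}). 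As $g_{k+1}\neq 0$, we have $(g_{k})\subseteq(g_{k+1})$ iff $g_{k}/g_{k+1}\in K(\!(\mathbf{t})\!)^{\circ}$, and in the fraction field $g_{k}/g_{k+1}=(t_{1}+t_{2}^{k+1})/(t_{1}+t_{2}^{k})$; similarly $(g_{k+1})\subseteq(g_{k})$ iff $(t_{1}+t_{2}^{k})/(t_{1}+t_{2}^{k+1})\in K(\!(\mathbf{t})\!)^{\circ}$. Thus everything reduces to comparing the polyhedra $N_{k}:=\mathrm{Newt}(t_{1}+t_{2}^{k})=\mathrm{conv}\{e_{1},k e_{2}\}+\mathbb{R}_{\ge 0}^{m}$, where $e_{i}\in\mathbb{N}^{m}$ denotes the $i$-th standard basis vector.

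The verification rests on three elementary observations about these polyhedra: (i) $e_{1}\in N_{k}$ for all $k$, since $e_{1}$ is one of the generators of $N_{k}$; (ii) $(k+1)e_{2}=k e_{2}+e_{2}\in N_{k}$; and (iii) $k e_{2}\notin N_{k+1}$, because a representation $k e_{2}=\lambda e_{1}+(1-\lambda)(k+1)e_{2}+r$ with $\lambda\in[0,1]$ and $r\in\mathbb{R}_{\ge 0}^{m}$ would force $\lambda=0$ by comparing first coordinates, and then the second coordinate gives $k=(k+1)+r_{2}$, which is impossible. From (i), $\mathrm{Newt}(t_{1})=e_{1}+\mathbb{R}_{\ge 0}^{m}\subseteq N_{k}$, so each $g_{k}\in K(\!(\mathbf{t})\!)^{\circ}$. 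From (i) and (ii), together with convexity and upward‑closedness of $N_{k}$, we get $N_{k+1}\subseteq N_{k}$, i.e. $(t_{1}+t_{2}^{k+1})/(t_{1}+t_{2}^{k})\in K(\!(\mathbf{t})\!)^{\circ}$, hence $(g_{k})\subseteq(g_{k+1})$. From (iii), together with $k e_{2}\in N_{k}$, we get $N_{k}\not\subseteq N_{k+1}$, i.e. $(t_{1}+t_{2}^{k})/(t_{1}+t_{2}^{k+1})\notin K(\!(\mathbf{t})\!)^{\circ}$, hence $(g_{k+1})\not\subseteq(g_{k})$. Therefore the chain of principal ideals $(g_{1})\subsetneq(g_{2})\subsetneq(g_{3})\subsetneq\cdots$ does not stabilize, and $K(\!(\mathbf{t})\!)^{\circ}$ is not Noetherian.

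The Newton‑polyhedron dictionary and observations (i)--(iii) are routine; the real obstacle is choosing the chain so that it is provably \emph{strict} and does not collapse. Naive attempts tend to fail: keeping a low‑order term such as $t_{1}+t_{2}$ in the denominator makes $\mathrm{Newt}(\psi)$ so large that the candidate strict quotients fall back into $K(\!(\mathbf{t})\!)^{\circ}$. The remedy is to keep the numerator fixed and equal to a monomial while letting the denominator's Newton polyhedron genuinely shrink, with the vertex $k e_{2}$ escaping to infinity; note that $\bigcap_{k\ge 1}N_{k}=e_{1}+\mathbb{R}_{\ge 0}^{m}=\mathrm{Newt}(t_{1})$, which is exactly why $t_{1}$ is the correct numerator. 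This device requires a second variable, consistent with the fact (\cref{ex:case_m=1.1}) that for $m=1$ the ring is $K[\![t]\!]$, which is Noetherian.
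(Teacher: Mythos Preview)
Your proof is correct and follows essentially the same strategy as the paper: exhibit an explicit strictly ascending chain of principal ideals in $K(\!(\mathbf{t})\!)^{\circ}$ and verify strictness via the tropical valuation. The paper works on the semiring side, choosing elements $\omega_{n}=\dfrac{t^{2n+1}+u^{2n+1}}{t^{2n+1}+t^{n}u^{n}+u^{2n+1}}\in V\mathbb{B}(t,u)^{\circ}$, checking $\omega_{n}<\omega_{n+1}$, and pulling back the ideals $\{q:\trop(q)\le\omega_{n}\}$ via \cref{ideals_are_the_same}; your chain $g_{k}=t_{1}/(t_{1}+t_{2}^{k})$ does the same job directly in $K(\!(\mathbf{t})\!)^{\circ}$, with simpler generators (monomial over binomial rather than binomial over trinomial) and a self-contained Newton-polyhedron check in place of the appeal to the correspondence theorem. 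Both arguments ultimately rest on \cref{ideals_are_downwards_closed_gen}, which says principal ideals are determined by $\trop$ of a generator; your route just makes this more explicit and avoids the detour through $V\mathbb{B}(\mathbf{t})^{\circ}$.
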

\begin{proof}
We demonstrate the statement in the case of $\mathbf{t}=(t,u)$, the general case follows.
Define, for each $n \geq 1$, the element
\[ 
\omega_n := 
\frac{t^{2n+1}+u^{2n+1}}{t^{2n+1}+t^nu^n+u^{2n+1}} \in V\mathbb{B}(t,u). 
\]
Then for each $n$ we have that $\omega_n \in V\mathbb{B}(t,u)^\circ$, and also  $\omega_n < \omega_{n+1}$. Define
\[ 
I_n = \{q \in K(\!(\mathbf{t})\!)^\circ : \trop(q) \leq \omega_n\}. 
\]
It follows from  the correspondence Theorem \ref{ideals_are_the_same} that $I_n$ defines an ideal in $K(\!(\mathbf{t})\!)^{\circ}$,   {which can also be proven directly}. 
Then it follows that $I_n \subsetneq I_{n+1}$ and the $I_n$ form a strictly increasing sequence of ideals in $K(\!(\mathbf{t})\!)^\circ$. 
\end{proof}

\section{Models over the unit ball and initial degenerations}
\label{Sect_models_deg}
For a fixed integer $m\geq1$, we consider the B\'ezout-valued field $(K(\!(\mathbf{t})\!),\trop)$ together with its unit ball $K(\!(\mathbf{t})\!)^\circ\subset K(\!(\mathbf{t})\!)$ from the previous section. 
The purpose of this section is to create models over $K(\!(\mathbf{t})\!)^\circ$ of certain schemes $X$ {defined over} $K(\!(\mathbf{t})\!)$,
as well as initial degenerations of certain $K(\!(\mathbf{t})\!)$-algebras which appear in the theory of differential algebra. In this first section, we follow \cite[\S4]{gubler2013guide}.

\subsection{Models over the unit ball}
{Let $R$ be an integral domain such that $\Frac(R)=K$. 
An $R$-\emph{model} of a scheme $X$ over the field $K$ is a flat scheme $\mathcal X$ over $R$ with generic fiber $\mathcal{X}_\eta=\mathcal{X}_K=X$. See \cite[Definition 4.1]{gubler2013guide}.

Equivalently, if $B=\Spec(R)$, then} a model of $X$ over $B$ is a flat  morphism\footnote{Here \emph{flat} is in the sense of Definition II-28 and \emph{proper} as defined on page 95 in \cite{EH_GeomSch}.} $\pi:\mathcal X \to B$ such that $X= \mathcal X\times_B\Spec(K)$. 
Then $X$ is called the \emph{generic fibre} of $\mathcal X$. {Sometimes we also ask the map $\pi:\mathcal X \to B$ to be proper.}

\begin{lemma}
\label{lem:flat}
    A module over $K(\!(\mathbf{t})\!)^\circ$ is flat if and only if it is torsion-free.
\end{lemma}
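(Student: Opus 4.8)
The plan is to use the classical fact that over a domain $R$, a module is flat if and only if it is torsion-free provided $R$ is a Pr\"ufer domain (equivalently, all localizations at maximal ideals are valuation rings). Since a B\'ezout domain is Pr\"ufer, $K(\!(\mathbf{t})\!)^\circ$ qualifies, and the statement should follow by invoking this. But to keep the paper reasonably self-contained, I would give the argument directly.

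\begin{proof}
One direction holds over any integral domain $R$: if $M$ is a flat $R$-module and $a \in R$, $a \neq 0$, then multiplication by $a$ is injective on $R$, hence remains injective after tensoring with $M$, so $a$ is not a zero-divisor on $M$; thus $M$ is torsion-free.

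For the converse, write $R = K(\!(\mathbf{t})\!)^\circ$ and let $M$ be a torsion-free $R$-module. By Lazard's theorem it suffices to show $M$ is the filtered colimit of finitely generated free $R$-modules; equivalently, since flatness can be checked on finitely generated submodules and a filtered colimit of flat modules is flat, it suffices to prove that every finitely generated torsion-free $R$-module is free. So let $N \subseteq R^k$ be generated by $n_1,\dots,n_s$; we may assume $N$ is a submodule of a finite free module since $N$ embeds into $N \otimes_R \Frac(R) = \Frac(R)^r$ and then, after clearing denominators, into $R^r$. We argue by induction on $r$. If $r = 1$, then $N$ is a finitely generated ideal of the B\'ezout domain $R$, hence principal, hence free (it is nonzero and torsion-free, so isomorphic to $R$). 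For $r > 1$, project $N$ onto the last coordinate to get a short exact sequence $0 \to N' \to N \to N'' \to 0$ with $N' \subseteq R^{r-1}$ and $N''$ a finitely generated ideal of $R$; by the $r=1$ case $N''$ is free, so the sequence splits, and $N \cong N' \oplus N''$ with $N'$ free by induction. Hence $N$ is free, completing the proof.
\end{proof}

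The main obstacle is the step reducing an arbitrary torsion-free module to the finitely generated case and then to a submodule of a finite free module: one must be careful that a torsion-free module over a domain embeds in a vector space over the fraction field and that flatness is a local-on-submodules property. Once that reduction is in place, the B\'ezout property does all the remaining work in a single clean induction, so I do not expect serious difficulty beyond bookkeeping. An alternative, even shorter route, is simply to cite that B\'ezout domains are Pr\"ufer and that over a Pr\"ufer domain flat equals torsion-free; if the paper is willing to quote this, the proof collapses to two sentences.
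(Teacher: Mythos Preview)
Your proof is correct. The paper takes precisely the two-sentence route you offer at the end: it notes that flat implies torsion-free over any domain, and for the converse simply cites that torsion-free modules over Pr\"ufer (hence B\'ezout) domains are flat. Your self-contained argument---reducing to finitely generated torsion-free submodules and showing these are free by induction on the rank of an ambient free module---is a legitimate alternative that trades an external reference for an elementary structural proof. One point deserves to be made explicit in your write-up: to apply the induction hypothesis to $N'$ you need it to be finitely generated, which is not automatic here since $K(\!(\mathbf{t})\!)^\circ$ is not Noetherian for $m>1$; this follows only \emph{after} you have split the sequence, since then $N'$ is a direct summand (hence a quotient) of the finitely generated module $N$.
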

\begin{proof}
   Any flat module is torsion-free.  {The converse follows from the fact that torsion-free modules over Pr\"ufer domains, and hence B\'ezout domains, are flat (see e.g.~\cite[Theorem 3.3]{bazzoni2006prufer}).}
\end{proof}

{For another fixed integer $n\geq 1$, we consider the set of variables $\{x_{i,J}\::\:1\leq i\leq n,\:J\in\mathbb{N}^m\}$, which we abbreviate simply by $\{x_{i,J}\}$.
Let us denote by $R_{m,n}$ the ring of polynomials in the variables $\{x_{i,J}\}$ with coefficients in the unit ball $K(\!(\mathbf{t})\!)^\circ$. It is customary to express the elements of $R_{m,n}$ as finite sums $P=\sum_Ma_ME_M$ where $a_M\in K(\!(\mathbf{t})\!)^\circ$ and $E_M$ are finite products of the variables $\{x_{i,J}\}$.

We consider flat schemes over $K(\!(\mathbf{t})\!)^\circ$ of the form $\mathcal{X}=Spec(A)$ for $A=R_{m,n}/J$, thus its generic fiber $X=\mathcal{X}_\eta$ is of the form $X=Spec(A_{K(\!(\mathbf{t})\!)})$ for $A_{K(\!(\mathbf{t})\!)}:=A\otimes_{K(\!(\mathbf{t})\!)^{\circ}}K(\!(\mathbf{t})\!)$. 
By flatness we have $A\subset A_{K(\!(\mathbf{t})\!)}$. 
A closed subscheme $Y$ of $X$ is given by an ideal $I_Y\subset A_{K(\!(\mathbf{t})\!)}$. 
The closure $\overline{Y}$ of $Y$ in $\mathcal{X}$ (in the Zariski topology) is defined as the closed subscheme of $\mathcal{X}$ given by the ideal $I_Y\cap A$.}
{The following result and its proof are straightforward generalizations of \cite[Proposition 4.4]{gubler2013guide}.

\begin{proposition}
\label{prop:creation_of_models}
    The closure $\overline{Y}$ of $Y$ in $\mathcal{X}$ is the unique closed subscheme of $\mathcal{X}$ with generic fiber $Y$ which is flat over $K(\!(\mathbf{t})\!)^\circ$
\end{proposition}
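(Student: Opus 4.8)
The plan is to mimic the proof of \cite[Proposition 4.4]{gubler2013guide}, adapting it to the B\'ezout-domain setting via \cref{lem:flat}. Write $A = R_{m,n}/J$, $A_{K(\!(\mathbf{t})\!)} = A \otimes_{K(\!(\mathbf{t})\!)^\circ} K(\!(\mathbf{t})\!)$, and let $Y \subset X = \Spec(A_{K(\!(\mathbf{t})\!)})$ be cut out by the ideal $I_Y \subset A_{K(\!(\mathbf{t})\!)}$. By definition $\overline{Y}$ is the closed subscheme of $\mathcal{X}$ given by $I_Y \cap A$, so I must show two things: (i) $\overline{Y}$ is flat over $K(\!(\mathbf{t})\!)^\circ$ with generic fiber $Y$, and (ii) it is the only closed subscheme with these two properties.

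For (i): flatness. By \cref{lem:flat} it suffices to check that $A/(I_Y\cap A)$ is torsion-free as a $K(\!(\mathbf{t})\!)^\circ$-module. Suppose $a \in A$ and $c \in K(\!(\mathbf{t})\!)^\circ$, $c \neq 0$, with $ca \in I_Y \cap A$. Then $ca \in I_Y$, and since $c$ is invertible in $K(\!(\mathbf{t})\!)$ and $I_Y$ is an ideal of $A_{K(\!(\mathbf{t})\!)}$, we get $a \in I_Y$; also $a \in A$, so $a \in I_Y \cap A$. Hence the class of $a$ in $A/(I_Y\cap A)$ is zero, proving torsion-freeness and thus flatness. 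For the generic fiber: I must identify $\overline{Y} \times_{K(\!(\mathbf{t})\!)^\circ} K(\!(\mathbf{t})\!)$ with $Y$. Tensoring the exact sequence $0 \to I_Y \cap A \to A \to A/(I_Y\cap A) \to 0$ with the flat (indeed localization) module $K(\!(\mathbf{t})\!)$ over $K(\!(\mathbf{t})\!)^\circ$, and using that $A \hookrightarrow A_{K(\!(\mathbf{t})\!)}$ with $A \otimes K(\!(\mathbf{t})\!) = A_{K(\!(\mathbf{t})\!)}$, gives that $(I_Y\cap A)\otimes K(\!(\mathbf{t})\!)$ is the ideal of $A_{K(\!(\mathbf{t})\!)}$ generated by $I_Y\cap A$. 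Since every element of $I_Y$ can be written as $a/c$ with $a \in A$, $c \in K(\!(\mathbf{t})\!)^\circ\setminus\{0\}$, and clearing denominators shows $a \in I_Y \cap A$, this generated ideal is exactly $I_Y$. Hence the generic fiber is $\Spec(A_{K(\!(\mathbf{t})\!)}/I_Y) = Y$.

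For (ii): uniqueness. Let $\mathcal{Z} \subset \mathcal{X}$ be another closed subscheme, given by an ideal $\mathfrak{a} \subset A$, which is flat over $K(\!(\mathbf{t})\!)^\circ$ and has generic fiber $Y$. Flatness of $A/\mathfrak{a}$ means (\cref{lem:flat}) it is torsion-free, so the localization map $A/\mathfrak{a} \to (A/\mathfrak{a})\otimes K(\!(\mathbf{t})\!) = A_{K(\!(\mathbf{t})\!)}/\mathfrak{a}^e$ is injective, i.e. $\mathfrak{a} = \mathfrak{a}^e \cap A$ where $\mathfrak{a}^e$ denotes the extension of $\mathfrak{a}$ to $A_{K(\!(\mathbf{t})\!)}$. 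Having generic fiber $Y$ forces $\mathfrak{a}^e = I_Y$. Therefore $\mathfrak{a} = I_Y \cap A$, so $\mathcal{Z} = \overline{Y}$.

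The main obstacle is purely bookkeeping: making precise, in each tensor step, that $A\otimes_{K(\!(\mathbf{t})\!)^\circ}K(\!(\mathbf{t})\!)$ really is the localization $A_{K(\!(\mathbf{t})\!)}$ sitting above $A$ (which is given to us by flatness, i.e. $A \subset A_{K(\!(\mathbf{t})\!)}$), and that "flat $\Leftrightarrow$ torsion-free'' lets us replace, everywhere in the classical proof, the valuation-ring argument ("$\pi$-torsion-free'') by the B\'ezout-domain statement of \cref{lem:flat}. No genuinely new idea beyond \cite[Proposition 4.4]{gubler2013guide} is needed; the content is that the hypotheses of that proposition (a Dedekind or valuation base there) are only used through torsion-freeness, which \cref{lem:flat} supplies here. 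I would therefore present the argument compactly, citing \cite[Proposition 4.4]{gubler2013guide} for the structure and only spelling out the torsion-free substitutions.
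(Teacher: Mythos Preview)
Your proposal is correct and is exactly the approach the paper takes: the paper gives no explicit proof, stating only that the result and its proof are straightforward generalizations of \cite[Proposition 4.4]{gubler2013guide}, and your write-up carries out precisely that generalization, replacing the valuation-ring torsion argument by the B\'ezout-domain equivalence of \cref{lem:flat}.
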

}

{In particular, we can apply Proposition \ref{prop:creation_of_models} to $A=R_{m,n}$, so that $A_{K(\!(\mathbf{t})\!)}=F_{m,n}$ is the ring of polynomials in the variables $\{x_{i,J}\}$ and coefficients in $K(\!(\mathbf{t})\!)$. Thus, given an ideal $I_X\subset F_{m,n}$, its contraction $I_X\cap R_{m,n}$   gives already a model $\mathcal{X}:=\Spec(R_{m,n}/I_X\cap R_{m,n})$  over $K(\!(\mathbf{t})\!)^\circ$ for $X=\Spec(F_{m,n}/I_X)$. This shows that we can construct models effectively by taking the  closure of ideals $I\subset F_{m,n}$.}

We will introduce the motivation from differential algebra behind these particular choices in \cref{section:TDA}.

We denote by $\kappa(\mathfrak{p})$ the residue field of a point $\mathfrak{p}\in \Spec(K(\!(\mathbf{t})\!) ^\circ)$. If $\mathcal{X}=\Spec(R_{m,n}/J)$ is a flat scheme over $K(\!(\mathbf{t})\!) ^\circ$, the fibre $\mathcal{X}_{\mathfrak{p}}$ of $\mathcal{X}$ over $\mathfrak{p}$ is the spectrum of the ring $(R_{m,n}/J)\otimes_{K(\!({\bf t})\!)^\circ}\kappa(\mathfrak{p})$. If the prime ideal under consideration is a maximal ideal $\mathfrak{m}\subset K(\!(\mathbf{t})\!)^{\circ}$, then $\kappa(\mathfrak{m})=K(\!({\bf t})\!)^\circ/\mathfrak{m}$.

Let $\mathfrak{m}\subset K(\!(\mathbf{t})\!)^{\circ}$ be a maximal ideal and consider the quotient projection $\pi:K(\!({\bf t})\!)^\circ\xrightarrow[]{}K(\!({\bf t})\!)^\circ/\mathfrak{m}$. 
By Corollary~\ref{quotient_by_maximal_ideal_is_K} below, there exists an isomorphism $\psi_{\mathfrak m}:K(\!({\bf t})\!)^\circ/\mathfrak{m}\to K$, so there is an induced morphism $\psi_{\mathfrak m}\circ\pi:K(\!({\bf t})\!)^\circ\xrightarrow[]{}K$ (which with the help of Theorem \ref{thm_characterization} can be described concretely as in \eqref{eq:concrete_reduction}).

This in turn induces a morphism 
\begin{equation}
\label{eq:reduction}
    \psi_{\mathfrak m}\circ\pi:R_{m,n}\xrightarrow[]{}K[x_{i,J}]
\end{equation}
by applying $\psi_{\mathfrak m}\circ\pi$ coefficient-wise: if $P=\sum_Ma_ME_M$ is an element in $R_{m,n}$, then $\psi_{\mathfrak m}\circ\pi(P)=\sum_M\psi_{\mathfrak m}\circ\pi(a_M)E_M$.

\begin{lemma}
\label{lem:fiber_general}
Let  $J\subset R_{m,n}$ be an ideal and let $\mathfrak{m}\subset K(\!(\mathbf{t})\!)^{\circ}$ be a maximal ideal. Then $(R_{m,n}/J)\otimes_{K(\!({\bf t})\!)^\circ} (K(\!({\bf t})\!)^\circ/\mathfrak m) \cong K[x_{i,J}]/\overline{J}_{\mathfrak{m}}$, where $\overline{J}_{\mathfrak{m}}\subset K[x_{i,J}]$ denotes the extended ideal under the morphism \eqref{eq:reduction}.  
\end{lemma}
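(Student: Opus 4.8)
The plan is to deduce the statement from two standard facts: forming a polynomial ring commutes with base change, and the tensor product is right exact.

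First I would observe that, as a $K(\!(\mathbf{t})\!)^\circ$-algebra, $R_{m,n}$ is the free polynomial algebra $K(\!(\mathbf{t})\!)^\circ[x_{i,J}]$, so for every $K(\!(\mathbf{t})\!)^\circ$-algebra $A$ there is a canonical isomorphism $R_{m,n}\otimes_{K(\!(\mathbf{t})\!)^\circ}A\xrightarrow{\sim}A[x_{i,J}]$ carrying $\big(\sum_M a_M E_M\big)\otimes 1$ to $\sum_M \bar a_M E_M$, where $\bar a_M$ denotes the image of $a_M$ in $A$. Taking $A=K(\!(\mathbf{t})\!)^\circ/\mathfrak m$ and composing with the isomorphism $\psi_{\mathfrak m}\colon K(\!(\mathbf{t})\!)^\circ/\mathfrak m\xrightarrow{\sim}K$ of Corollary~\ref{quotient_by_maximal_ideal_is_K}, I obtain an isomorphism $R_{m,n}\otimes_{K(\!(\mathbf{t})\!)^\circ}(K(\!(\mathbf{t})\!)^\circ/\mathfrak m)\xrightarrow{\sim}K[x_{i,J}]$ which is exactly the coefficient-wise reduction map~\eqref{eq:reduction}; in particular it sends $P\otimes 1$ to $\psi_{\mathfrak m}\circ\pi(P)$.

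Next I would apply $-\otimes_{K(\!(\mathbf{t})\!)^\circ}(K(\!(\mathbf{t})\!)^\circ/\mathfrak m)$ to the short exact sequence $0\to J\to R_{m,n}\to R_{m,n}/J\to 0$. By right exactness, and after transporting along the identification of the previous step, this yields an exact sequence
\[
J\otimes_{K(\!(\mathbf{t})\!)^\circ}(K(\!(\mathbf{t})\!)^\circ/\mathfrak m)\longrightarrow K[x_{i,J}]\longrightarrow (R_{m,n}/J)\otimes_{K(\!(\mathbf{t})\!)^\circ}(K(\!(\mathbf{t})\!)^\circ/\mathfrak m)\longrightarrow 0 .
\]
Hence $(R_{m,n}/J)\otimes_{K(\!(\mathbf{t})\!)^\circ}(K(\!(\mathbf{t})\!)^\circ/\mathfrak m)$ is the quotient of $K[x_{i,J}]$ by the image of the first map, and that image is the ideal generated by $\{\psi_{\mathfrak m}\circ\pi(P):P\in J\}$, i.e. the extended ideal $\overline{J}_{\mathfrak m}$ by definition. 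This gives the claimed isomorphism.

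The argument is essentially formal, so I do not expect a genuine obstacle; the only two points deserving a line of care are (i) checking that the abstract base-change isomorphism coincides with the explicit map~\eqref{eq:reduction} — both are $K(\!(\mathbf{t})\!)^\circ$-algebra homomorphisms, hence it is enough to compare them on the generators $x_{i,J}$ and on coefficients, which is immediate — and (ii) identifying the image of $J\otimes_{K(\!(\mathbf{t})\!)^\circ}(K(\!(\mathbf{t})\!)^\circ/\mathfrak m)$ in $K[x_{i,J}]$ with $\overline{J}_{\mathfrak m}$, which is the standard fact that for a ring homomorphism $f\colon A\to B$ the image of $J\otimes_A B$ in $B$ equals the extended ideal $f(J)B$.
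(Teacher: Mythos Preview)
Your proposal is correct and is essentially the same argument as the paper's proof: the paper uses the identification $M\otimes_R(R/\mathfrak m)\cong M/\mathfrak m M$ together with the third isomorphism theorem to reach $(R_{m,n}/\mathfrak m R_{m,n})/(\text{image of }J)$ and then identifies $R_{m,n}/\mathfrak m R_{m,n}\cong K[x_{i,J}]$, which is exactly what your right-exactness argument does in slightly different packaging. If anything, your version is a bit more carefully stated about why the quotient ideal is precisely the extended ideal $\overline{J}_{\mathfrak m}$.
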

\begin{proof}
    By little abuse of notation we denote by $\mathfrak m$ also the ideal it generates in $R_{m,n}$ and also the induced map $\psi_{\mathfrak m}:R_{m,n}/\mathfrak{m}\to K[x_{i,J}]$.
    Then 
    \begin{eqnarray*}
    (R_{m,n}/J)\otimes_{K(\!({\bf t})\!)^\circ} (K(\!({\bf t})\!)^\circ/\mathfrak m) &\cong&
    (R_{m,n}/J)/(R_{m,n}/J \cdot \mathfrak m) \\
    &\cong& (R_{m,n}/\mathfrak m)/(J \cdot \mathfrak m) \\
    &\cong& K[x_{i,J}]/\overline{J}_{\mathfrak{m}}
    \end{eqnarray*}
    where $\overline{J}_{\mathfrak{m}}$ is the ideal generated by $\psi_{\mathfrak m}(J\cdot \mathfrak m)\subset K[x_{i,J}]$.
\end{proof}

In particular, if $\mathcal{X}=\Spec(R_{m,n}/J)$ is a flat scheme over $K(\!(\mathbf{t})\!) ^\circ$ and $\mathfrak{m}\subset K(\!(\mathbf{t})\!)^{\circ}$ is a maximal ideal, then the closed fiber $\mathcal{X}_{\mathfrak{m}}$ of the model $\mathcal{X}$ is simply $\Spec(K[x_{i,J}]/\overline{J}_{\mathfrak{m}})$, where $\overline{J}_{\mathfrak{m}}$ is the ideal in $K[x_{i,J}]$ generated by $\{\psi_{\mathfrak m}\circ\pi(P)\::\:P\in J\}$.

\subsection{Tropical differential algebra}\label{section:TDA}
Let $K$ be a field of characteristic zero and let $n,m\geq1$ be integers. Recall that $F_{m,n}$ denotes the ring of polynomials in the variables $\{x_{i,J}\}$ and coefficients in $K(\!(\mathbf{t})\!)$.

In order to define our notion of (initial) degeneration in this setting, we need some concepts from tropical differential algebra, for which we follow~\cite[\S7.1]{CGL}.

{We denote by $D=\{\tfrac{\partial}{\partial t_i}\::\:i=1,\ldots,n\}$ the usual partial derivations defined on $K[\![\mathbf{t}]\!]$. If $E$ is a monomial in the variables $\{x_{i,J}\}$ and $\varphi=(\varphi_1,\ldots,\varphi_n)\in K[\![\mathbf{t}]\!]^n$, we define the differential evaluation $E(\varphi)$ by replacing the variable  $x_{i,J}$ for $J=(j_1,\ldots,j_m)$ with $\tfrac{\partial^{j_1+\cdots +j_m}\varphi_i}{\partial t_1^{j_1} \cdots \partial t_m^{j_m}}$. If $P=\sum_M\frac{a_M}{b_M}E_M$ is an element in $F_{m,n}$,  where $a_M,b_M \in K[\![\mathbf{t}]\!]$ and $E_M$ are differential monomials, we define $\text{ev}_P(\varphi)=\sum_M\frac{a_M}{b_M}E_M(\varphi)$. 

Denote by $e_K:\mathbb{B}[\![\mathbf{t}]\!]\to K[\![\mathbf{t}]\!]$ the section of the map $\Supp: K[\![\mathbf{t}]\!]\rightarrow \mathbb{B}[\![\mathbf{t}]\!]$ that sends $A\subset\mathbb{N}^m$ to the formal power series $e_K(A)=\sum_{I\in  A}\mathbf{t}^I$. If $E$ is a monomial in the variables $\{x_{i,J}\}$ and $w=(w_1,\ldots,w_n)\in\mathbb{B}[\![\mathbf{t}]\!]^n$, then we have 
\begin{equation}
\label{eq:ev_mon}
    V(E(w))=\trop(E(e_K(w))),
\end{equation}
where $e_K(w)=(e_K(w_1),\ldots,e_K(w_n))\in K[\![\mathbf{t}]\!]^n$. 

The next step is to use these tools from differential algebra together with the tropical seminorm $\trop: K(\!(\mathbf{t})\!)\to V\mathbb{B}(\mathbf{t})$ to define a seminorm $\trop_w:F_{m,n}\longrightarrow V\mathbb{B}(\mathbf{t})$ that depends on a fixed weight vector $w=(w_1,\ldots,w_n)\in\mathbb{B}[\![\mathbf{t}]\!]^n$.}

\begin{definition}\label{tropical_valuation}
Given a fixed weight vector $w\in\mathbb{B}[\![\mathbf{t}]\!]^n$, we define the map $\trop_w:F_{m,n}\longrightarrow V\mathbb{B}(\mathbf{t})$ by sending $P=\sum_M\frac{a_M}{b_M}E_M$ to
\begin{equation}
\label{eq:tropw}
    \trop_w(P):=\bigoplus_M\trop\biggl(\frac{a_M}{b_M}E_M(e_K(w))\biggr)=\bigoplus_M\biggl(\frac{\trop(a_M)}{\trop(b_M)}\odot V(E_M(w))\biggr).
\end{equation}
\end{definition}

The equality between the two expressions appearing in \eqref{eq:tropw} follows from \eqref{eq:ev_mon}. It was shown in~\cite{CGL} that $\trop_w$ is a $K$-algebra seminorm.

Given a weight vector $w=(w_1,\ldots,w_n)\in \mathbb{B}[\![\mathbf{t}]\!]^n$ we will now use the seminorm \eqref{eq:tropw} to construct a $w$-translation map 
\begin{equation}
\label{eq:wtr}
    \tr_w:F_{m,n}\xrightarrow[]{}R_{m,n}.
\end{equation}

The following construction is inspired by the works~\cite{FT20,HuGao2020} in the ordinary case. We will also see in \eqref{eq:explicit_translation} that the explicit expression for the map \eqref{eq:wtr} does not fall very far to the usual expression of the translation to the origin of a torus, which is operative in classical tropical geometry, see~\cite[\S5]{gubler2013guide}.

{Let us go back to the differential evaluation $E(e_K(w))$ which appears in the definition of~\eqref{eq:tropw}. Recall that for $\varphi\in K[\![\mathbf{t}]\!]\setminus\{0\}$, we denote by $\overline{\varphi}\in K[\![\mathbf{t}]\!]$ the restriction of $\varphi$ to the vertices of its Newton polygon. 
Then $\varphi=\overline{\varphi}+\widetilde{\varphi}$, where the {\it initial form}  of $\varphi$ is $\overline{\varphi}$, and it is a polynomial.

For every variable $x_{i,J}$ appearing in $E$, we write $\Theta(J)(e_K(w_i))=\frac{\partial^{j_1+\cdots +j_m}e_K(w_i)}{\partial t_1^{j_1} \cdots \partial t_m^{j_m}}$. 
Thus the differential evaluation $E(e_K(w))$ equals the usual algebraic evaluation of $E$ at the vector $(\Theta(J)(e_K(w_i))\::\:i,J)$, and we have a decomposition 
\begin{equation}
\label{eq:initial_data_dm}
    E(e_K(w))=E(\overline{\Theta(J)(e_K(w_i)})\::\:i,J)+R,
\end{equation}
where again the {\it initial form} of $E(e_K(w))$ is stored in the first term of the right hand side of~\eqref{eq:initial_data_dm}.
}

If $P\in F_{m,n}$ satisfies $\trop_w(P)=\frac{a}{b}\neq0$ with  {$A=\trop(a)$ and $B=\trop(b)$}, we set \[T(\trop_w(P))^{-1}:= {\frac{B}{A}} \in K(\!(\mathbf{t})\!)\]
where $A,B$ are considered in $K(\!(\mathbf{t})\!)$ via the natural embedding. Since $A$ and $B$ are uniquely determined by $a$ and $b$, respectively, $T(\trop_w(P))^{-1}$ exists. Moreover, $T(\trop_w(P))^{-1}$ is well-defined by taking into account the multiplicativity of $\trop$.

We now specify the polynomial $\tr_w(P)=P_w$ by substituting every instance of $x_{i,J}$ in $P$ by $\overline{\Theta(J)(e_K(w_i))}x_{i,J}$  and then multiplying the result by $T(\trop_w(P))^{-1}$:
\begin{equation}\label{eq:init form}
P_w:=\begin{cases}
T(\trop_w(P))^{-1}P(\overline{\Theta(J)(e_K(w_i))}x_{i,J}),&\text{ if }\trop_w(P)\neq0\\
0,&\text{ if }\trop_w(P)=0. \\
\end{cases}
\end{equation}

\begin{proposition}
If $P=\sum_Ma_ME_M\in F_{m,n}$, then $P_w$ is an element in $R_{m,n}$. Also, if $\trop_w(P)\neq0$, then 
\begin{equation}
\label{eq:explicit_translation}
    P_w=\sum_M[T(\trop_w(P))^{-1}a_ME_M(\overline{\Theta(J)(e_K(w_i)}))]E_M,
\end{equation}
where $E_M(\overline{\Theta(J)(e_K(w_i)}))$ denotes the usual algebraic evaluation of the monomial $E_M$ at the vector $(\overline{\Theta(J)(e_K(w_i)})\:i,J)$.
\end{proposition}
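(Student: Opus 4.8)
The plan is to unwind Definition~\eqref{eq:init form} into the explicit shape \eqref{eq:explicit_translation} and then check that each coefficient of the resulting polynomial lands in the unit ball $K(\!(\mathbf{t})\!)^\circ$.

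First I would dispose of the case $\trop_w(P)=0$, in which $P_w=0\in R_{m,n}$ by definition, so assume $\trop_w(P)\neq 0$. To get \eqref{eq:explicit_translation} I substitute $x_{i,J}\mapsto\overline{\Theta(J)(e_K(w_i))}\,x_{i,J}$ into $P=\sum_M a_M E_M$ and use that each $E_M$ is a \emph{monomial}: if $E_M=\prod_{i,J}x_{i,J}^{d_{i,J}}$, then $E_M(\overline{\Theta(J)(e_K(w_i))}\,x_{i,J})=\bigl(\prod_{i,J}\overline{\Theta(J)(e_K(w_i))}^{\,d_{i,J}}\bigr)\,E_M=E_M(\overline{\Theta(J)(e_K(w_i))})\,E_M$, where the scalar $E_M(\overline{\Theta(J)(e_K(w_i))})$ is the algebraic evaluation of $E_M$ at the tuple $(\overline{\Theta(J)(e_K(w_i))})_{i,J}$; multiplying through by $T(\trop_w(P))^{-1}$ yields \eqref{eq:explicit_translation}. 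Since each $\overline{\Theta(J)(e_K(w_i))}$ is a polynomial, the scalar $E_M(\overline{\Theta(J)(e_K(w_i))})$ lies in $K[\![\mathbf{t}]\!]$, so each bracketed coefficient lies a priori in $K(\!(\mathbf{t})\!)$; the remaining task is to show it actually has $\trop\le 1$.

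The key point is the identity $\trop(\overline{\varphi})=\trop(\varphi)$ for nonzero $\varphi\in K[\![\mathbf{t}]\!]$, which holds because $\overline\varphi$ is the restriction of $\varphi$ to the vertices of its Newton polyhedron and those vertices generate the same Newton polyhedron. Combined with multiplicativity of $\trop$ and \eqref{eq:ev_mon}, this gives $\trop\bigl(E_M(\overline{\Theta(J)(e_K(w_i))})\bigr)=\trop\bigl(E_M(e_K(w))\bigr)=V(E_M(w))$, and hence $\trop\bigl(a_M E_M(\overline{\Theta(J)(e_K(w_i))})\bigr)=\trop(a_M)\odot V(E_M(w))$, which is precisely the $M$-th summand of \eqref{eq:tropw}. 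In the idempotent semiring $V\mathbb{B}(\mathbf{t})$ every summand is $\le$ the total sum, so this term is $\le\trop_w(P)$. Since $\trop\bigl(T(\trop_w(P))^{-1}\bigr)=(\trop_w(P))^{-1}$ by multiplicativity of $\trop$ together with the definition of $T$, and multiplication by a fixed element of the semifield preserves $\le$, the $\trop$ of the $M$-th coefficient of $P_w$ is $\le(\trop_w(P))^{-1}\odot\trop_w(P)=1$. Thus every coefficient of $P_w$ lies in $K(\!(\mathbf{t})\!)^\circ$ and $P_w\in R_{m,n}$.

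The only real subtlety is the identity $\trop(\overline\varphi)=\trop(\varphi)$ and its monomial consequence $\trop(E_M(\overline{\Theta(J)(e_K(w_i))}))=V(E_M(w))$; once these are established, the rest is formal manipulation of the order on $V\mathbb{B}(\mathbf{t})$. A small amount of care is needed for monomials $E_M$ containing a variable $x_{i,J}$ with $\overline{\Theta(J)(e_K(w_i))}=\overline 0=0$: such $E_M$ contribute a zero coefficient to $P_w$, consistently with the vanishing of the corresponding summand $\trop(a_M)\odot V(E_M(w))$ of $\trop_w(P)$.
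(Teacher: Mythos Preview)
Your argument is correct. The derivation of the explicit formula \eqref{eq:explicit_translation} matches the paper's: both simply observe that substituting $x_{i,J}\mapsto\overline{\Theta(J)(e_K(w_i))}\,x_{i,J}$ into a monomial $E_M$ factors out the scalar $E_M(\overline{\Theta(J)(e_K(w_i))})$.

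For the membership $P_w\in R_{m,n}$, however, you take a different and more self-contained route. The paper does not prove the inequality $\trop(\text{coefficient})\le 1$ directly; instead it invokes \cite[Lemma~7.14]{CGL}, which establishes the result for the variant in which each $\overline{\Theta(J)(e_K(w_i))}$ is replaced by $T(w_i,J):=e_K(\Vert((w_i-J)_{\ge 0}))$, and then observes that these two polynomials have the same support (hence the same $\trop$), so the conclusion transfers. Your approach bypasses the external reference entirely: you isolate the identity $\trop(\overline\varphi)=\trop(\varphi)$, combine it with \eqref{eq:ev_mon} and multiplicativity to get $\trop\bigl(a_ME_M(\overline{\Theta(J)(e_K(w_i))})\bigr)=\trop(a_M)\odot V(E_M(w))$, and then use the idempotent order (each summand is $\le$ the sum) together with $\trop\bigl(T(\trop_w(P))^{-1}\bigr)=\trop_w(P)^{-1}$ to conclude. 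This is essentially the content of the cited lemma unpacked in situ, and it has the advantage of making the mechanism transparent rather than deferring it; the paper's version, on the other hand, keeps the proof short by leaning on the reference.
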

\begin{proof}
It was shown in~\cite[Lemma 7.14]{CGL} that \eqref{eq:init form} is an element in $R_{m,n}$ when instead of using $(\overline{\Theta(J)(e_K(w_i))}\::\:i,J)$ one uses $(T(w_i,J)\::\:i,J)$, where 
$T(w_i,J)=e_K(\Vert((w_i-J)_{\geq0}))$. These two expressions are linked as follows 

\begin{equation*}
    T(w_i,J)=e_K(\Supp(\overline{\Theta(J)(e_K(w_i)}))),\quad \text{for all }i,J.
\end{equation*}

Thus the first part follows since $T(w_i,J)$ and  $\overline{\Theta(J)(e_K(w_i))}$ have the same support for all $i,J$ and non-negative integer coefficients.

The second part follows from \eqref{eq:initial_data_dm}, since replacing every instance of $x_{i,J}$ in $P$ by $\overline{\Theta(J)(e_K(w_i))}x_{i,J}$ sends the monomial $E_M$ of $P$ to $E_M(\overline{\Theta(J)(e_K(w_i)}))E_M$.
\end{proof}

 {
 \begin{remark}
 \label{rem:two_versions}
     Note that the following polynomial 
 \begin{equation*}
    \widetilde{P_w}=\sum_M[T(\trop_w(P))^{-1}a_ME_M(e_K(w))]E_M,
\end{equation*}
is a modified version of \eqref{eq:explicit_translation}, and it represents  the exact copy in this setting of the translation to the origin of a torus appearing in \cite[\S5]{gubler2013guide}. 
However, our definition is simpler since it extracts only the initial data (which is a polynomial) of the evaluations $E_M(e_K(w))$ (which are series), as we showed above.  
 \end{remark}
 }

\begin{definition}\label{def:init ideal}
Let $w=(w_1,\ldots,w_n)\in\mathbb{B}[\![\mathbf{t}]\!]^n$. 
For a given ideal $G\subset F_{m,n}$, we define its {\it $w$-translation}  by $G_w$ as the ideal in $R_{m,n}$ generated by $\{P_w\::\:P\in G\}$.
\end{definition}

\begin{proposition}\label{Proposition_Model}
Let $w=(w_1,\ldots,w_n)\in\mathbb{B}[\![\mathbf{t}]\!]^n$ and $G\subset F_{m,n}$ be an ideal. If {$R_{m,n}/G_w$ is torsion-free}, then 
\begin{equation*}
    \mathcal{X}(w):=\Spec(R_{m,n}/G_w)
\end{equation*}
is a model over $K(\!(\mathbf{t})\!) ^\circ$ for the generic fibre $\mathcal{X}(w)_\eta$.
\end{proposition}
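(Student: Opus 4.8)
The plan is to verify the definition of model directly: we must show that $\pi:\mathcal X(w)\to\Spec(K(\!(\mathbf{t})\!)^\circ)$ is flat and that its generic fibre recovers $\mathcal{X}(w)_\eta$. The second requirement is essentially tautological: the generic fibre of $\Spec(R_{m,n}/G_w)$ over $\Spec(K(\!(\mathbf{t})\!)^\circ)$ is $\Spec\bigl((R_{m,n}/G_w)\otimes_{K(\!(\mathbf{t})\!)^\circ}K(\!(\mathbf{t})\!)\bigr)$, which is precisely how we \emph{defined} $\mathcal{X}(w)_\eta$ in the statement. So the only substantive point is flatness.

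For flatness, the key tool is \cref{lem:flat}: a module over $K(\!(\mathbf{t})\!)^\circ$ is flat if and only if it is torsion-free, this being a consequence of $K(\!(\mathbf{t})\!)^\circ$ being a B\'ezout (hence Pr\"ufer) domain. Since $R_{m,n}/G_w$ is torsion-free over $K(\!(\mathbf{t})\!)^\circ$ by hypothesis, \cref{lem:flat} immediately gives that it is flat as a $K(\!(\mathbf{t})\!)^\circ$-module. Flatness of the module is equivalent to flatness of the associated morphism of affine schemes $\pi:\Spec(R_{m,n}/G_w)\to\Spec(K(\!(\mathbf{t})\!)^\circ)$, so $\mathcal{X}(w)$ is a flat scheme over the base. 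Combined with the identification of the generic fibre above, this shows $\mathcal{X}(w)$ is a model over $K(\!(\mathbf{t})\!)^\circ$ for $\mathcal{X}(w)_\eta$.

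Strictly speaking, there is essentially no obstacle here: the proposition is a direct application of \cref{lem:flat} once one unwinds the definitions. The only point requiring minor care is making sure that $G_w$ is genuinely an ideal of $R_{m,n}$ (so that the quotient makes sense and is a $K(\!(\mathbf{t})\!)^\circ$-module), but this is already guaranteed since each $P_w$ lies in $R_{m,n}$ by the preceding proposition and \cref{def:init ideal} defines $G_w$ as the ideal they generate. I would also remark, for the reader's orientation, that this proposition is the natural analogue in our setting of \cite[Proposition 4.4]{gubler2013guide} / \cref{prop:creation_of_models}, except that here we do not take a scheme-theoretic closure: the torsion-freeness hypothesis is imposed precisely so that $R_{m,n}/G_w$ is already flat without any further modification. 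Thus the proof is: the generic fibre is $\mathcal{X}(w)_\eta$ by construction; torsion-freeness plus \cref{lem:flat} gives flatness; hence $\mathcal{X}(w)$ is a model.
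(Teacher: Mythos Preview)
Your proof is correct and follows exactly the paper's approach: the paper's entire proof is the single sentence ``This is a consequence of Lemma~\ref{lem:flat},'' and you have simply unpacked that sentence by making explicit the tautological identification of the generic fibre and the passage from torsion-free to flat via \cref{lem:flat}.
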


\begin{proof}
{This is a consequence of Lemma \ref{lem:flat}.}
\end{proof}

Let $w\in\mathbb{B}[\![\mathbf{t}]\!]^n$ and $G\subset F_{m,n}$ be such that {$R_{m,n}/G_w$ is torsion-free}. The fibre $\mathcal{X}(w)_{\mathfrak{p}}$ of $\mathcal{X}(w)$ over $\mathfrak{p}\in \Spec(K(\!(\mathbf{t})\!) ^\circ)$ is the spectrum of the ring $(R_{m,n}/G_w)\otimes_{K(\!({\bf t})\!)^\circ}\kappa(\mathfrak{p})$.

\begin{definition}
Let $w=(w_1,\ldots,w_n)\in\mathbb{B}[\![\mathbf{t}]\!]^n$ and $G\subset F_{m,n}$ be such that {$R_{m,n}/G_w$ is torsion-free}. 
Further let $\mathcal{X}(w)=\Spec(R_{m,n}/G_w)$ with generic fibre $X=\mathcal{X}(w)_\eta$. 
Given a point $\mathfrak{p}\in \Spec(K(\!(\mathbf{t})\!) ^\circ)\setminus\{\eta\}$ the fibre $\mathcal{X}(w)_{\mathfrak{p}}$ of $\mathcal{X}(w)$ over $\mathfrak{p}\in \Spec(K(\!(\mathbf{t})\!) ^\circ)$ is called the \emph{initial degeneration} of $X$ at the pair $(w,\mathfrak{p})$. 
\end{definition}

\begin{theorem}\label{thm_degeneration}
Let $w=(w_1,\ldots,w_n)\in\mathbb{B}[\![\mathbf{t}]\!]^n$ and $G\subset F_{m,n}$ be an ideal such that {$R_{m,n}/G_w$ is torsion-free}. 
Then for any maximal ideal $\mathfrak{m}\subset K(\!(\mathbf{t})\!)^{\circ}$ we have that
\[
\mathcal X(w)_{\mathfrak{m}}\cong \Spec\left(K[x_{i,J}]/\overline{G}_w\!\right) 
\]
where $\overline{G}_w\subset K[x_{i,J}]$ denotes the extended ideal under the morphism \eqref{eq:reduction}.

\end{theorem}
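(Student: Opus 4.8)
The plan is to deduce the statement directly from the general fibre computation of \cref{lem:fiber_general}, applied to the particular ideal $J := G_w \subset R_{m,n}$. Under the torsion-freeness hypothesis on $R_{m,n}/G_w$, \cref{Proposition_Model} tells us that $\mathcal{X}(w) = \Spec(R_{m,n}/G_w)$ is a model over $K(\!(\mathbf{t})\!)^\circ$, so its closed fibre over a maximal ideal $\mathfrak{m}\subset K(\!(\mathbf{t})\!)^\circ$ is, by definition, the spectrum of the base change
\[
(R_{m,n}/G_w)\otimes_{K(\!(\mathbf{t})\!)^\circ}\kappa(\mathfrak{m}),\qquad \kappa(\mathfrak{m}) = K(\!(\mathbf{t})\!)^\circ/\mathfrak{m},
\]
the last equality being the description of the residue field at a maximal ideal recorded above. (Note that \cref{lem:fiber_general} is a purely ring-theoretic identity, so flatness is only needed in order to legitimately speak of $\mathcal{X}(w)_{\mathfrak{m}}$ as the closed fibre of a model.)

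First I would apply \cref{lem:fiber_general} with $J = G_w$; it produces a $K$-algebra isomorphism
\[
(R_{m,n}/G_w)\otimes_{K(\!(\mathbf{t})\!)^\circ}(K(\!(\mathbf{t})\!)^\circ/\mathfrak{m})\;\cong\;K[x_{i,J}]/\overline{G}_w,
\]
where, invoking \cref{quotient_by_maximal_ideal_is_K} for the isomorphism $\psi_{\mathfrak{m}}\colon K(\!(\mathbf{t})\!)^\circ/\mathfrak{m}\xrightarrow{\sim}K$, the ideal $\overline{G}_w$ is the extended ideal of $G_w$ under the coefficient-wise reduction morphism $\psi_{\mathfrak{m}}\circ\pi\colon R_{m,n}\to K[x_{i,J}]$ of \eqref{eq:reduction}. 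Taking $\Spec$ of both sides yields exactly $\mathcal{X}(w)_{\mathfrak{m}}\cong\Spec(K[x_{i,J}]/\overline{G}_w)$.

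To make $\overline{G}_w$ usable I would then unwind the definition of the $w$-translated ideal: by \cref{def:init ideal}, $G_w$ is generated by $\{P_w : P\in G\}$, and since $\psi_{\mathfrak{m}}\circ\pi$ is a ring homomorphism, its extension $\overline{G}_w$ is generated in $K[x_{i,J}]$ by $\{\psi_{\mathfrak{m}}\circ\pi(P_w) : P\in G\}$, i.e.\ by the reductions modulo $\mathfrak{m}$ of the $w$-translations of the elements of $G$. There is no real obstacle in this argument: the substantive inputs are \cref{Proposition_Model} and \cref{lem:fiber_general}, and the only point that needs attention is purely bookkeeping — checking that the ideal denoted $\overline{G}_w$ in the statement coincides with the ideal $\overline{J}_{\mathfrak{m}}$ from the proof of \cref{lem:fiber_general} in the case $J = G_w$. (Once \cref{thm_characterization} is available, the morphism $\psi_{\mathfrak{m}}\circ\pi$ — and hence $\overline{G}_w$ — admits an explicit description in terms of the monomial order corresponding to $\mathfrak{m}$, which is how the concrete formulas for initial degenerations are obtained later.)
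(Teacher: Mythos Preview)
Your proposal is correct and follows essentially the same approach as the paper: the paper's proof is a one-line application of \cref{lem:fiber_general} to the ideal $J=G_w$, noting that $\mathcal X(w)_{\mathfrak m}$ is by definition the spectrum of $(R_{m,n}/G_w)\otimes_{K(\!({\bf t})\!)^\circ}(K(\!({\bf t})\!)^\circ/\mathfrak m)$. Your additional unwinding of the generators of $\overline{G}_w$ is exactly what the paper records separately as \cref{prop:in:deg}.
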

\begin{proof}
We have that $\mathcal X(w)_{\mathfrak m}$ is the spectrum of $(R_{m,n}/G_w)\otimes_{K(\!({\bf t})\!)^\circ} (K(\!({\bf t})\!)^\circ/\mathfrak m)$, which is isomorphic to $K[x_{i,J}]/(\overline{G_w})_{\mathfrak{m}}$ by Lemma \ref{lem:fiber_general}.
\end{proof}

\begin{remark}
In the case of $m=1$ (i.e. the case of discrete valuation rings) since $K[\![t]\!]= K(\!(t)\!)^{\circ}\subset K(\!(t)\!)$ it follows that $\Spec(K(\!(t)\!)^{\circ})=\{(0),\mathfrak{m}=(t)\}$.
So there is only one initial degeneration $\mathcal{X}(w)_\mathfrak{m}$ for {every} $w=(w_1,\ldots,w_n)\in\mathbb{B}[\![t]\!]^n$.
As the residue field of $K(\!(t)\!)^{\circ}$ is $\kappa(\mathfrak{m})=K$ we have that $\mathcal{X}(w)_\mathfrak{m}$ is a scheme over $K$.

We will study the maximal spectrum of $K(\!(\mathbf{t})\!)^{\circ}$ for the general case of $m>1$ in \S\ref{section_maximalIdeals}.
\end{remark}

\subsection{Initial ideals along maximal ideals}
{
Let $G\subset F_{m,n}$ be any ideal and $w\in\mathbb{B}[\![\mathbf{t}]\!]^n$, so that $G_w$ is an ideal of $ R_{m,n}$. If $\mathfrak{m}\subset K(\!(\mathbf{t})\!)^{\circ}$ is a maximal ideal, in this section we will focus  on the extended ideal $(\overline{G_w})_{\mathfrak{m}}\subset K[x_{i,J}]$ under the morphism \eqref{eq:reduction}, no matter if  $ R_{m,n}/G_w$ is flat or not. We will show that these extended ideals share many properties with the  construction of initial ideals in classical tropical geometry.
We start with the following result 
\begin{proposition}
\label{prop:in:deg}
Let $G\subset F_{m,n}$ be an ideal, $w=(w_1,\ldots,w_n)\in\mathbb{B}[\![\mathbf{t}]\!]^n$ and $\mathfrak{m}\subset K(\!(\mathbf{t})\!)^{\circ}$ a maximal ideal. Then the extended ideal $(\overline{G_w})_{\mathfrak{m}}\subset K[x_{i,J}]$ under the morphism \eqref{eq:reduction}  is the ideal in $K[x_{i,J}]$ generated by $\{\psi_{\mathfrak m}\circ\pi(P_w)\::\:P\in G\}$.

\end{proposition}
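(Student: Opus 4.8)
The plan is to deduce Proposition~\ref{prop:in:deg} formally from Definition~\ref{def:init ideal} together with the elementary fact that the extension of an ideal along a ring homomorphism is generated by the images of any generating set. First I would note that the morphism~\eqref{eq:reduction}, which I abbreviate $\rho:=\psi_{\mathfrak m}\circ\pi\colon R_{m,n}\to K[x_{i,J}]$, is a genuine ring homomorphism: it is obtained by applying the ring homomorphism $\psi_{\mathfrak m}\circ\pi\colon K(\!(\mathbf{t})\!)^\circ\to K$ coefficient-wise, and the latter is a ring homomorphism, being the quotient by the maximal ideal $\mathfrak m$ composed with the isomorphism $K(\!(\mathbf{t})\!)^\circ/\mathfrak m\cong K$ of Corollary~\ref{quotient_by_maximal_ideal_is_K}. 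In particular $\rho$ is surjective, so $\rho(G_w)$ is already an ideal of $K[x_{i,J}]$, and by definition of extension it equals $(\overline{G_w})_{\mathfrak m}$.

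Next I would invoke Definition~\ref{def:init ideal}: by construction $G_w$ is the ideal of $R_{m,n}$ generated by $S:=\{P_w : P\in G\}$. The two inclusions are then immediate. For one containment, each $\rho(P_w)$ with $P\in G$ lies in $\rho(G_w)=(\overline{G_w})_{\mathfrak m}$ because $P_w\in G_w$, so the ideal generated by $\rho(S)$ is contained in $(\overline{G_w})_{\mathfrak m}$. For the other, an arbitrary element of $G_w$ has the form $\sum_k Q_k (P_k)_w$ with $Q_k\in R_{m,n}$ and $P_k\in G$; applying the ring homomorphism $\rho$ gives $\sum_k \rho(Q_k)\,\rho\!\left((P_k)_w\right)$, which manifestly lies in the ideal generated by $\rho(S)$, whence $(\overline{G_w})_{\mathfrak m}=\rho(G_w)$ is contained in that ideal. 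Combining the two gives the claimed equality.

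There is no genuine obstacle: the statement is purely formal, and the single point deserving a word of care is that $\rho$ really is a ring homomorphism — so that the computation above applies verbatim — which reduces to $\psi_{\mathfrak m}\circ\pi\colon K(\!(\mathbf{t})\!)^\circ\to K$ being one. I would also flag the caveat that, since $\tr_w$ is not additive, one may not a priori replace the indexing set ``$P\in G$'' by a finite generating set of $G$; but this concern does not arise for the statement as written, where $P$ ranges over all of $G$.
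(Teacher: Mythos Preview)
Your proposal is correct and follows essentially the same argument as the paper: both verify the two inclusions by using that $\psi_{\mathfrak m}\circ\pi$ is a ring homomorphism and that $G_w$ is, by Definition~\ref{def:init ideal}, generated by the $P_w$ for $P\in G$. The paper additionally records the explicit formula~\eqref{eq:explicit_initial_deg} for $\psi_{\mathfrak m}\circ\pi(P_w)$ in passing (needed later), but the logical structure of the proof is the same as yours.
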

\begin{proof}
    If $P=\sum_Ma_ME_M\in F_{m,n}$ and $\trop_w(P)\neq0$, we have from  \eqref{eq:explicit_translation} that $P_w=\sum_M[T(\trop_w(P))^{-1}a_ME_M(\overline{\Theta(J)(e_K(w_i)}))]E_M$, thus 
\begin{equation}
\label{eq:explicit_initial_deg}
    \psi_{\mathfrak m}\circ\pi(P_w)=\sum_M\psi_{\mathfrak m}\circ\pi[T(\trop_w(P))^{-1}a_ME_M(\overline{\Theta(J)(e_K(w_i)}))]E_M.
\end{equation}

Now, it is clear that the ideal in $K[x_{i,J}]$ generated by $\{\psi_{\mathfrak m}\circ\pi(P_w)\::\:P\in G\}$ is contained in $(\overline{G_w})_{\mathfrak{m}}$. Conversely, an element of $G_w$ is a finite sum $R=\sum_P Q_PP_w$ with $P\in G$ and $Q_P\in R_{m,n}$, thus every generator  $(\overline{G_w})_{\mathfrak{m}}$ is of the form 
$$
\psi_{\mathfrak m}\circ\pi(R)=\psi_{\mathfrak m}\circ\pi\bigl(\sum_P Q_PP_w\bigr)=\sum_P \psi_{\mathfrak m}\circ\pi(Q_P)\psi_{\mathfrak m}\circ\pi(P_w),
$$
since the map $\psi_{\mathfrak m}\circ\pi$ from \eqref{eq:reduction} is a homomorphism of rings.
\end{proof}

\begin{remark}
    \label{rem:two_versions_initial}
    Note that the following polynomial 
 \begin{equation*}
    \psi_{\mathfrak m}\circ\pi(    \widetilde{P_w})=\sum_M\psi_{\mathfrak m}\circ\pi[T(\trop_w(P))^{-1}a_ME_M(e_K(w))]E_M,
\end{equation*}
is a modified version of \eqref{eq:explicit_initial_deg}, and it would be the exact copy in this setting of the initial form appearing in \cite[Remark 5.7]{gubler2013guide}, which is the usual definition of initial form which is operative in tropical geometry. 
Once again, our definition is simpler for the same reasons described in Remark \ref{rem:two_versions}.
\end{remark}

\begin{definition}
\label{def:new_initials}
For a pair $(w,\mathfrak{m})$ of a weight $w=(w_1,\ldots,w_n)\in\mathbb{B}[\![\mathbf{t}]\!]^n$ and a maximal ideal $\mathfrak{m}\subset K(\!(\mathbf{t})\!)^{\circ}$, we denote \eqref{eq:explicit_initial_deg} by $\text{in}_{(w,\mathfrak{m})}(P):=\psi_{\mathfrak m}\circ\pi(P_w)$, and call it the \emph{initial form of $P\in F_{m,n}$ at $(w,\mathfrak{m})$}.

Similarly,  if $G\subset F_{m,n}$ is an ideal, we denote  by $\text{in}_{(w,\mathfrak{m})}(G)$ the ideal in $K[x_{i,J}]$ generated by $\{\text{in}_{(w,\mathfrak{m})}(P)\::\:P\in G\}$, and call it the \emph{initial ideal of $G$ at $(w,\mathfrak{m})$}.
\end{definition}
 }

{We now return to the differential setting. The set $D=\{\tfrac{\partial}{\partial t_i}\::\:i=1,\ldots,n\}$ of partial derivations defined on $K[\![\mathbf{t}]\!]$ can be extended to $K(\!(\mathbf{t})\!)$ in the usual way, and also to $F_{m,n}$. If we denote them also by $D$, the pair $(F_{m,n},D)$ is a differential ring, and an ideal $G\subset F_{m,n}$ is differential if it is closed under the action of $D$.

We are particularly interested in the case in which $G\subset F_{m,n}$ is a differential ideal and $m>1$.}

\begin{example}\label{exp}
Let $P=x_{(1,1)}-tx_{(0,0)} \in K(\!(t,u)\!)\{x\}=F_{2,1}$ and $w = \mathbb{N}^2 \setminus \{(1,1)\}\in\mathbb{B}[\![t,u]\!]$. 
Then $\trop_w(P)=\{(1,0),(0,1)\}$ and
\[
P_w = x_{(1,1)} - \frac{t}{t+u}x_{(0,0)}. 
\]
Consider a maximal ideal $\mathfrak{m}\subset K(\!(t,u)\!)^{\circ}$. {We  see from Example \ref{exp_continuation} that}
\[
P_w\!\mod \mathfrak{m}=\left\{\begin{matrix} x_{(1,1)}, & \frac{t}{t+u}\in\mathfrak m\\
x_{(1,1)}- x_{(0,0)}, & \frac{t}{t+u}\notin \mathfrak m\end{matrix}\right.
\]
The derivatives of $P$ are of the form
\[P_{(j_1,j_2)}= x_{(1+j_1,1+j_2)} - tx_{(j_1,j_2)} - j_1x_{(j_1-1,j_2)} .\]
We obtain for every $(j_1,j_2) \ne (0,0)$ that $\trop_w(P_{(j_1,j_2)})=\{(0,0)\}$ and
\[ (P_{(j_1,j_2)})_w =
\begin{cases}
x_{(1+j_1,1+j_2)} - t(t+u)x_{(j_1,j_2)} - j_1x_{(j_1-1,j_2)}, & (j_1,j_2)=(1,1) \\
x_{(1+j_1,1+j_2)} - tx_{(j_1,j_2)} - j_1(t+u)x_{(j_1-1,j_2)}, & (j_1,j_2)=(2,1) \\
x_{(1+j_1,1+j_2)} - tx_{(j_1,j_2)} - j_1x_{(j_1-1,j_2)}, & \text{otherwise.}
\end{cases} \]
Since the differential ideal generated by $P$ in $F_{2,1}$, denoted as $[P]$, is prime, the algebraic ideal generated by the $(P_{(j_1,j_2)})_w$ already gives us $[P]_w$. 
Moreover, there is no element in $[P]$ that is in $K(\!(t,u)\!)[x_{(0,0)},x_{(1,0)},x_{(0,1)}]$ 
and since all differential monomials of $(P_{(j_1,j_2)})_w$ are the same as that of $P_{(j_1,j_2)}$, there is also no such element in $[P]_w$. 
All differential monomials involving higher derivatives can be reduced to one of order one or zero such that we obtain
\[ R_{2,1}/[P]_w \cong K(\!(t,u)\!)^{\circ}[x_{(0,0)},x_{(1,0)},x_{(0,1)}].\]
A characterization of prime ideals of such a polynomial ring is given e.g. in~\cite{ferrero1997prime}.
\end{example}

\section{Maximal ideals of the unit ball}\label{section_maximalIdeals}

In \S\ref{Sect_models_deg} we saw that for any point $\mathfrak{p}\in \Spec(K(\!(\mathbf{t})\!) ^\circ)\setminus\{\eta\}$, we can construct a degeneration of a generic fibre. Among the most important prime ideals are the maximal ideals, and in this section we characterize the maximal ideals of the B\'ezout domain $K(\!(\mathbf{t})\!)^\circ$.
We fix an integer $m\geq1$. First we recall the definition of monomial order.

\begin{definition}\label{def:mono order} 
Let $<$ be a total order on the monoid $(\mathbb{N}^m,+,0)$. Then $<$ is a \emph{monomial order} if 
\begin{itemize}
    \item[(a)] $0<a$ for all $a\in \mathbb{N}^m\setminus \{0\}$,
    \item[(b)] $a<b$ implies $a+c<b+c$ for all $c\in \mathbb{N}^m$.
\end{itemize}
\end{definition}

\begin{lemma} \label{lem:mono is vertex}
Every monomial order on $\mathbb{N}^m$ satisfies that for any non-empty subset $S \subset \mathbb{N}^m$ there is a unique minimum $\min_<(S)$ which moreover is a vertex of {the convex hull of} $S$. In particular, a monomial order is a well-order.
\end{lemma}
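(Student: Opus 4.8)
The plan is to prove three intertwined claims about a monomial order $<$ on $\mathbb{N}^m$: existence of a minimum for every non-empty $S$, the fact that this minimum is a vertex of the convex hull of $S$, and the well-order property. I would organize the argument so that the vertex claim does most of the work and the well-order property falls out as a corollary.

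First I would handle the case of \emph{finite} non-empty $S$: since $<$ is a total order, a finite set trivially has a unique minimum, so the only content is that $\min_<(S)$ is a vertex of $\mathrm{conv}(S)$. Suppose for contradiction that $a := \min_<(S)$ is not a vertex of $\mathrm{conv}(S)$. Then $a$ lies in the convex hull of $S \setminus \{a\}$, so there are $b_1,\dots,b_k \in S\setminus\{a\}$ and positive rationals $\lambda_i$ with $\sum_i \lambda_i = 1$ and $\sum_i \lambda_i b_i = a$. Clearing denominators, there are positive integers $c_i$ and a positive integer $N$ with $\sum_i c_i = N$ and $\sum_i c_i b_i = N a$ in $\mathbb{N}^m$. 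Now I would exploit translation-invariance (axiom (b)): since $a < b_i$ is false only if $a = b_i$, and the $b_i$ are distinct from $a$, we have $a < b_i$ for all $i$; adding up $c_i$ copies and using axiom (b) repeatedly gives $N a = \sum_i c_i a < \sum_i c_i b_i = N a$, a contradiction. (One must be slightly careful: axiom (b) gives $a + c < b_i + c$, and chaining these additions yields $\sum c_i a < \sum c_i b_i$; the strictness survives because at least one summand is strict.) Hence $a$ is a vertex.

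Next I would pass to \emph{arbitrary} non-empty $S \subset \mathbb{N}^m$. The key observation is that $\mathrm{conv}(S)$ has only finitely many vertices, and every vertex of $\mathrm{conv}(S)$ lies in $S$; moreover, $\mathrm{conv}(S)$ equals the convex hull of its vertex set plus a recession cone, but for our purposes it is enough that every point of $S$ lies in the convex hull of the vertices together with finitely many extreme rays — or more simply, I would reduce to finite subsets directly. Concretely: let $V$ be the (finite, non-empty) set of vertices of $\mathrm{conv}(S)$; by the finite case applied to $V$, there is a unique $<$-minimal element $a \in V$, and it is a vertex of $\mathrm{conv}(V) = \mathrm{conv}(S)$. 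I claim $a = \min_<(S)$. Take any $s \in S$. If $s$ is a vertex, then $s \in V$ so $a \le s$. If $s$ is not a vertex, then $s$ lies in $\mathrm{conv}(V)$, and running the same convex-combination-plus-clearing-denominators argument as above shows that $a \le s$ would fail only if the inequality $Na < \sum c_i v_i = Ns$ were violated — but each $v_i \in V$ satisfies $a \le v_i$, and $a < s$ follows exactly as before (if $a = v_i$ for all $i$ then $s = a$). This establishes that $a$ is the minimum of all of $S$, it is unique because $<$ is a total order, and it is a vertex of $\mathrm{conv}(S)$.

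Finally, the well-order property is immediate: I have shown every non-empty subset of $\mathbb{N}^m$ has a least element under $<$, which is exactly the definition of a well-order. The main obstacle, and the step deserving the most care, is the convexity argument: one needs Carathéodory/rational convex combinations to reduce to an integer identity $\sum c_i v_i = N a$ in $\mathbb{N}^m$ so that axioms (a) and (b) can be applied, and one must verify that the strict inequality is genuinely preserved under the repeated additions (it is, as long as $s \neq a$, since then some $v_i \neq a$ and that summand contributes a strict inequality). Everything else — uniqueness of the minimum, finiteness of the vertex set of a convex hull of a subset of $\mathbb{N}^m$, the fact that vertices of $\mathrm{conv}(S)$ belong to $S$ — is standard and I would state it with a one-line justification or a citation.
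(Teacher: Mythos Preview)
Your finite-case argument is correct and in fact more elementary than the paper's: where the paper invokes the representability of a monomial order on a finite set by a weight vector $w\in\mathbb{R}^m$ (citing Herzog--Hibi), you work directly with the monoid axioms via the clearing-denominators trick $\sum_i c_i b_i = Na$ and the chain of strict inequalities. That part is clean.

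The genuine gap is in your passage to arbitrary $S$. The claim that $\mathrm{conv}(S)$ has only finitely many vertices is false: take $S=\{(n,n^2):n\in\mathbb{N}\}\subset\mathbb{N}^2$, whose convex hull has every point of $S$ as a vertex. Even when the vertex set $V$ of $\mathrm{conv}(S)$ happens to be finite, it is not true that every $s\in S$ lies in $\mathrm{conv}(V)$; for $S=\mathbb{N}\times\{0\}$ one has $V=\{(0,0)\}$ and $(1,0)\notin\mathrm{conv}(V)$. So neither branch of your case split (``$s$ a vertex'' / ``$s\in\mathrm{conv}(V)$'') is available in general, and the reduction to the finite case collapses.

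The fix, which is what the paper does, is to replace $\mathrm{conv}(S)$ by the Newton polyhedron $P(S)=\mathrm{conv}(S+\mathbb{N}^m)$. By Dickson's lemma this has only finitely many vertices, all lying in $S$, and every $s\in S$ dominates some vertex $v$ componentwise, i.e.\ $s=v+c$ with $c\in\mathbb{N}^m$. Axioms (a) and (b) then give $v\le_< s$, so the $<$-minimum of the finite vertex set is the $<$-minimum of all of $S$. Your finite-case vertex argument can then be applied verbatim to that vertex set (and here it improves on the paper, which at this point appeals to a weight-vector representation).
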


\begin{proof}
Consider $a\in \mathbb{N}^m$ and observe that, by Definition~\ref{def:mono order}(b), $a=\min_<\{a+\mathbb{N}^m\}.$
For a subset $\emptyset\neq S\subset \mathbb{N}^m$, this observation generalizes as follows. 
Let $P(S)$ be the convex hull of $S+\mathbb{N}^m$ in $\mathbb R^m$ and let $U(S)$ be the union of all bounded faces of $P(S)$.
Then $U(S)\cap \mathbb{N}^m$ contains the vertex set of $S$ (which agrees with the set of vertices of $P(S)$) and moreover 
\[
\min{}_<\{U(S)\cap \mathbb{N}^m\} = \min{}_<\{S\}.
\]
We need to show that this minimum is achieved at a unique vertex of $S$.
As $U(S)\cap \mathbb{N}^m$ is finite, the monomial order $<$ can be represented by a weight vector $w\in \mathbb R^m$, i.e. $\min_<\{U(S)\cap \mathbb{N}^m\}=\min_w\{U(S)\cap \mathbb{N}^m\}$ for some $w\in \mathbb R^m$. 
Let $m_1<\dots<m_k$ be the elements of $U(S)\cap \mathbb{N}^m$. 
Then any $w\in \mathbb R^m$ satisfying $w\cdot m_1<w\cdot m_2<\dots<w\cdot m_k$ represents $<$;
moreover, we may assume $w\in \mathbb{N}^m_{> 0}$ (see e.g. Lemma 3.1.1 in~\cite{HerzogHibi}).

{Suppose $m=\min_<\{U(S)\cap \mathbb{N}^m\}$ is not a vertex. 
Then there exist $0<\lambda_i<1$ with $\sum_{i=1}^k \lambda_i = 1$ such that $m=\sum_{i=1}^k \lambda_i m_i$. In particular, 
\[
w\cdot m= \sum_{i=1}^k \lambda_i\, w\cdot m_i.
\]
So either $\lambda_{1}=\cdots=\lambda_{k}$ and $w\cdot m= w\cdot m_1 = \cdots = w \cdot m_k$, but this contradicts $w\cdot m_1<\dots<w\cdot m_k$; or $w\cdot m_i<w\cdot m<w\cdot m_j$ for some $1 \le i,j \le k$ which contradicts the assumption that $m=\min_w\{U(S)\cap \mathbb{N}^m\}$.
}
\end{proof}

\begin{remark}
Lemma~\ref{lem:mono is vertex} is closely related to~\cite[Lemma 3.7]{ArocaRond}, yet slightly different: Aroca and Rond are dealing with total orders $<$ in $\mathbb{R}^m$ compatible with its group structure, and which can be seen as monomial orders on $\mathbb R^m$. 
In loc. cit. it is shown that for any such order $<$ and any rational polyhedral cone $\sigma\subset \mathbb R^m$ which is {non-negative, i.e., $0 \le a$ for all $a \in \sigma$, the set $\sigma\cap \mathbb Z^m$ is well ordered.}
\end{remark}

The aim of this section is to establish the following bijection between the set of monomial orders on the monoid $(\mathbb{N}^m,+,0)$ and the set of maximal ideals of $K(\!(\mathbf{t})\!)^\circ$.

\begin{theorem}
\label{thm_characterization}
There is an identification between maximal ideals $\mathfrak{m}$ of $K(\!(\mathbf{t})\!)^\circ$ and monomial orders on $\mathbb{N}^m$ defined, for $I,J \in \mathbb{N}^m$, as
\begin{equation}
\label{eq_characterization}
I < J \quad \Longleftrightarrow \quad \frac{\mathbf{t}^J}{\mathbf{t}^I + \mathbf{t}^J} \in \mathfrak{m} \quad \Longleftrightarrow \quad \frac{\mathbf{t}^I}{\mathbf{t}^I + \mathbf{t}^J} \notin \mathfrak{m}.
\end{equation}
\end{theorem}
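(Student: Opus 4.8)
The plan is to construct, for each monomial order $<$ on $\mathbb{N}^m$, an explicit maximal ideal $\mathfrak{m}_<$ of $K(\!(\mathbf{t})\!)^\circ$, and conversely to show that every maximal ideal arises this way via the formula \eqref{eq_characterization}. By \cref{ideals_are_the_same} and the isomorphism $\Id(K(\!(\mathbf{t})\!)^\circ)\cong \Id_k(V\mathbb{B}(\mathbf{t})^\circ)$, it suffices to work entirely on the semiring side: I will establish a bijection between monomial orders on $\mathbb{N}^m$ and maximal $k$-ideals of $V\mathbb{B}(\mathbf{t})^\circ$, and then transport it back. So the first step is to recall that an element of $V\mathbb{B}(\mathbf{t})^\circ$ has the form $\trop(\varphi/\psi)=A/B$ with $A,B\in V\mathbb{B}[\mathbf{t}]$ vertex polynomials and $A\le B$ (i.e. every vertex of $A$ lies in the Newton polyhedron of $B$), and that $A/B\le A'/B'$ iff $A\odot B'\le A'\odot B$.

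\textbf{From a monomial order to a maximal ideal.}
Given a monomial order $<$, define $\mathfrak{m}_<$ to be the set of $q\in K(\!(\mathbf{t})\!)^\circ$ such that, writing $\trop(q)=A/B$, the $<$-minimal vertex of $A$ is strictly $<$-larger than the $<$-minimal vertex of $B$ — equivalently $\min_<(\Supp A)\neq \min_<(\Supp(A\oplus B))$. Using \cref{lem:mono is vertex}, every nonempty subset of $\mathbb{N}^m$ has a unique $<$-minimal element which is a vertex, so this is well-defined on vertex polynomials. I would check: (i) $\mathfrak{m}_<$ is downward closed in $V\mathbb{B}(\mathbf{t})^\circ$ — if $A/B\le A'/B'$ and the primed element has $\min_<$ strictly increasing, so does the unprimed one, because $A\odot B'\le A'\odot B$ forces $\min_<(\Supp A)+\min_<(\Supp B')\ge \min_<(\Supp A')+\min_<(\Supp B)$ and one then uses compatibility with $+$; (ii) it is closed under $\oplus$, since $\min_<(\Supp(C\oplus D))=\min(\min_<\Supp C,\min_<\Supp D)$; (iii) it is closed under multiplication by arbitrary elements of $V\mathbb{B}(\mathbf{t})^\circ$ — immediate from downward closure. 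Hence $\mathfrak{m}_<$ is a $k$-ideal by \cref{lem:substractive=downward closed}. To see it is maximal, note that its complement consists of elements with $\min_<(\Supp A)=\min_<(\Supp B)$; given any such element $A/B$, I claim the $k$-ideal it generates together with $\mathfrak{m}_<$ is all of $V\mathbb{B}(\mathbf{t})^\circ$: using that $A$ and $B$ share their $<$-minimal vertex, one builds a combination whose value is $1$. Via the correspondence theorem this $\mathfrak{m}_<$ pulls back to a maximal ideal of $K(\!(\mathbf{t})\!)^\circ$, and plugging in $q=\mathbf{t}^J/(\mathbf{t}^I+\mathbf{t}^J)$ — whose trop is the fraction with $A=\{J\}$, $B=\{I,J\}$ (the set $\{I,J\}$ already being its own vertex set unless one contains the other, in which case the non-trivial direction of \eqref{eq_characterization} is a direct check) — recovers exactly the displayed equivalences.

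\textbf{From a maximal ideal to a monomial order, and bijectivity.}
Conversely, let $\mathfrak{m}\subset K(\!(\mathbf{t})\!)^\circ$ be maximal and define $<$ on $\mathbb{N}^m$ by \eqref{eq_characterization}. I would first verify the second equivalence in \eqref{eq_characterization}: since $\frac{\mathbf{t}^I}{\mathbf{t}^I+\mathbf{t}^J}+\frac{\mathbf{t}^J}{\mathbf{t}^I+\mathbf{t}^J}$ has trop equal to $1$ hence is a unit (by \cref{cor:unit_ball}), it cannot lie in $\mathfrak{m}$, so exactly one of the two summands does — this uses that $\mathfrak{m}$, being maximal hence prime, has $k$-ideal image and that the sum of the two trop-values is $1$, which forces an "exactly one" dichotomy after passing through \cref{ideals_are_the_same}. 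Then I would check the monomial order axioms: totality and the dichotomy just noted give a well-defined total relation; irreflexivity follows because $\frac{\mathbf{t}^I}{2\mathbf{t}^I}$ is a unit; $0<a$ for $a\neq 0$ holds because $\frac{\mathbf{t}^a}{1+\mathbf{t}^a}\in K[\![\mathbf{t}]\!]\subset K(\!(\mathbf{t})\!)^\circ$ has trop $<1$ so lies in the prime $\mathfrak{m}$ whenever it is a non-unit, and one argues it must be in $\mathfrak{m}$; transitivity and additivity $a<b\Rightarrow a+c<b+c$ follow by manipulating the generating elements inside the prime $k$-ideal $\mathfrak{m}$, e.g. multiplying $\frac{\mathbf{t}^b}{\mathbf{t}^a+\mathbf{t}^b}$ by $\frac{\mathbf{t}^c}{\mathbf{t}^c}$ (a unit) and comparing with $\frac{\mathbf{t}^{b+c}}{\mathbf{t}^{a+c}+\mathbf{t}^{b+c}}$. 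Finally, the two constructions are mutually inverse: starting from $<$, forming $\mathfrak{m}_<$, and reading off the order via \eqref{eq_characterization} returns $<$ by the computation of $\trop(\mathbf{t}^J/(\mathbf{t}^I+\mathbf{t}^J))$ above; starting from $\mathfrak{m}$, the order $<$ it defines reconstructs $\mathfrak{m}$ because a maximal (hence by \cref{ideals_are_the_same} determined by its trop-values) ideal is pinned down by which elements $\mathbf{t}^J/(\mathbf{t}^I+\mathbf{t}^J)$ it contains — one shows these "pairwise comparison" elements generate enough $k$-ideal information to recover all of $\mathfrak{m}$, the key point being \cref{lem:mono is vertex} which says an arbitrary vertex polynomial's position relative to $\mathfrak{m}$ is governed by its $<$-minimal vertex.

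\textbf{Main obstacle.}
The hard part will be the last surjectivity/injectivity step: showing that a maximal ideal $\mathfrak{m}$ is completely determined by the "binomial" data $\{(I,J): \mathbf{t}^J/(\mathbf{t}^I+\mathbf{t}^J)\in\mathfrak{m}\}$, i.e. that the order $<$ it defines, fed back through the first construction, reproduces $\mathfrak{m}$ on the nose. This requires proving that the $k$-ideal image $v(\mathfrak m)\subset V\mathbb{B}(\mathbf{t})^\circ$ is exactly $\{A/B : \min_<(\Supp A) > \min_<(\Supp B)\}$; the inclusion "$\subseteq$" needs that if $A/B\in v(\mathfrak m)$ then its $<$-minimal vertices are strictly ordered, which one extracts by dominating $A/B$ below by a suitable binomial $\frac{\mathbf{t}^J}{\mathbf{t}^I+\mathbf{t}^J}$ with $J=\min_<\Supp A$, $I=\min_<\Supp B$ and invoking downward closure; the inclusion "$\supseteq$" needs that every such $A/B$ is dominated below by that same binomial, which is precisely \cref{lem:mono is vertex} applied to $A$ and to $A\oplus B$ together with a cofinality argument comparing Newton polyhedra. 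Getting these domination inequalities to hold at the level of vertex polynomials — rather than just supports — is the technical crux, and it is exactly where the vertex-polynomial structure of $V\mathbb{B}(\mathbf{t})$ (as opposed to just $\mathbb{B}[\![\mathbf{t}]\!]$) does the work.
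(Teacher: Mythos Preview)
Your overall architecture is close to the paper's: construct $\mathfrak{m}_<$ from a monomial order (the paper does this via the kernel of the evaluation map $f/g\mapsto f_{\min_<(\Supp g)}/g_{\min_<(\Supp g)}$, which is equivalent to your semiring description), and read off an order from a maximal ideal via \eqref{eq_characterization}. Transitivity and additivity go through essentially as you sketch.

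There is, however, a genuine gap in your treatment of antisymmetry and of the axiom $0<a$. You write that the sum $\tfrac{\mathbf{t}^I}{\mathbf{t}^I+\mathbf{t}^J}+\tfrac{\mathbf{t}^J}{\mathbf{t}^I+\mathbf{t}^J}$ is a unit, and that ``the sum of the two trop-values is $1$, which forces an `exactly one' dichotomy.'' This is not right. The ring sum being a unit gives only that \emph{at least one} summand is outside $\mathfrak{m}$; and on the semiring side, the $k$-ideal property (closure under $\oplus$) only rules out \emph{both summands lying in} $\mathfrak{m}$. Neither argument excludes the possibility that \emph{both} summands lie outside $\mathfrak{m}$. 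Likewise, your claim that $\tfrac{\mathbf{t}^a}{1+\mathbf{t}^a}$ ``must be in $\mathfrak{m}$'' because it is a non-unit is unjustified: non-units need not lie in every maximal ideal of a B\'ezout domain that is not local.

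The paper closes this gap with an ingredient you are missing entirely: the \emph{irrelevant lemma} (\cref{irrelevant_contained_in_maximal_ideal}), which says that any $q$ with $\trop(q)\ll 1$ (numerator vertex set disjoint from denominator vertex set) lies in \emph{every} maximal ideal. Antisymmetry then follows because for $I\neq J$ the product
\[
\frac{\mathbf{t}^I}{\mathbf{t}^I+\mathbf{t}^J}\cdot\frac{\mathbf{t}^J}{\mathbf{t}^I-\mathbf{t}^J}=\frac{\mathbf{t}^{I+J}}{\mathbf{t}^{2I}-\mathbf{t}^{2J}}
\]
has $\trop\ll 1$ (since $I+J$ is the midpoint of $2I,2J$, hence not a vertex), so it lies in $\mathfrak{m}$, and primeness forces one factor into $\mathfrak{m}$. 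The same lemma handles $0<a$, since $\trop(\tfrac{\mathbf{t}^a}{1+\mathbf{t}^a})=\{a\}/\{0\}\ll 1$. This lemma (together with \cref{product_of_differences}) is also what underlies $K(\!(\mathbf{t})\!)^\circ/\mathfrak{m}\cong K$, which in turn makes the inverse-bijection step you call the ``main obstacle'' essentially automatic: both $\mathfrak{m}$ and $\mathfrak{m}_{<_\mathfrak{m}}$ are kernels of $K$-algebra maps to $K$ agreeing on the binomial elements, hence equal by maximality. So the technical crux is not the domination inequalities you highlight, but rather establishing that $\trop(q)\ll 1$ forces $q\in\mathfrak{m}$.
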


We will use the remaining of this section to prove this result; the respective identifications are given in \cref{prop_maxideal} and \cref{Proposition_MaxIdeal_gives_Order}.

Given $f\in K[\![\mathbf{t}]\!]$ and $I\in\mathbb{N}^m$, we may denote by $f_I$ the coefficient of $\mathbf{t}^I$ in $f$.
\begin{proposition}\label{prop_maxideal}
Given a monomial order $<$ on $\mathbb N^m$, the set 
\[ 
\mathfrak{m}_{<} := \{f/g \in K(\!(\mathbf{t})\!)^\circ : f_{\min\!{}_{<} ( \Supp(g) )}=0\}
\]
is a maximal ideal of $K(\!(\mathbf{t})\!)^\circ$.
\end{proposition}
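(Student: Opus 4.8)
The plan is to verify directly that $\mathfrak{m}_{<}$ is an ideal, and then that it is maximal by exhibiting it as the kernel of a surjective $K$-algebra homomorphism onto the field $K$. First I would set up convenient notation: for $h = f/g \in K(\!(\mathbf{t})\!)^\circ$, note that $\trop(f) \le \trop(g)$, so by \cref{lem:mono is vertex} the minimum $I_g := \min_{<}(\Supp(g))$ is a vertex of the Newton polyhedron of $g$, and moreover $I_g \le I$ for every $I \in \Supp(f)$ in the $<$-order (since $\Supp(f)$ lies in the polyhedron generated by $\Supp(g)$, and the $<$-minimum of that polyhedron is attained at $I_g$). The first thing to check is that the defining condition $f_{I_g} = 0$ does not depend on the chosen representative $f/g$: if $f/g = f'/g'$ then $fg' = f'g$, and comparing coefficients of $\mathbf{t}^{I_g + I_{g'}}$ — which by the vertex property is the unique way to write that exponent as a sum from $\Supp(f)+\Supp(g')$ resp. $\Supp(f')+\Supp(g)$ — gives $f_{I_g} g'_{I_{g'}} = f'_{I_{g'}} g_{I_g}$, so $f_{I_g} = 0 \iff f'_{I_{g'}} = 0$. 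This well-definedness argument is the technical heart of everything that follows.

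Next I would define the map $\rho_{<} : K(\!(\mathbf{t})\!)^\circ \to K$ by $\rho_{<}(f/g) = f_{I_g}/g_{I_g}$ (note $g_{I_g} \ne 0$ since $I_g \in \Supp(g)$), and show it is a $K$-algebra homomorphism. Well-definedness follows from the same coefficient comparison as above. Additivity and multiplicativity reduce, via common denominators, to the key combinatorial fact: if $I_g = \min_{<}\Supp(g)$ and $I_h = \min_{<}\Supp(h)$, then $\min_{<}\Supp(gh) = I_g + I_h$ and the coefficient of $\mathbf{t}^{I_g+I_h}$ in $gh$ is exactly $g_{I_g}h_{I_h}$ — this uses property (b) of monomial orders together with \cref{lem:mono is vertex} applied to the Minkowski sum, and that $\min_{<}$ of a Minkowski sum is the sum of the $\min_{<}$'s, attained uniquely. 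For addition one writes $f_1/g + f_2/g = (f_1+f_2)/g$ and observes $\rho_{<}$ picks off the $\mathbf{t}^{I_g}$-coefficient linearly; the case of genuinely different denominators is handled by clearing them and checking that the relevant minimal exponent still behaves additively. Surjectivity is immediate since $\rho_{<}(c) = c$ for $c \in K$.

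Finally, $\mathfrak{m}_{<} = \ker \rho_{<}$ by construction, so it is a maximal ideal because $K(\!(\mathbf{t})\!)^\circ/\mathfrak{m}_{<} \cong K$ is a field. The main obstacle I anticipate is the bookkeeping around the Minkowski-sum / product formula for $\min_{<}$ and the leading coefficient: one must rule out cancellation in the $\mathbf{t}^{I_g+I_h}$-coefficient of $gh$, which is exactly where the uniqueness clause of \cref{lem:mono is vertex} (that the $<$-minimum is attained at a single vertex, hence in a unique way as a sum of lattice points from the two supports) does the work. Everything else — well-definedness under change of representative, the ring axioms, surjectivity — is routine once that formula is in hand. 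I would also remark that, alternatively, one could avoid constructing $\rho_{<}$ explicitly and instead check the ideal axioms for $\mathfrak{m}_{<}$ directly using \cref{ideals_are_downwards_closed_gen} and \cref{trop_determines_elements_of_ideal}, but the quotient-map approach simultaneously yields maximality and prepares the concrete description of the reduction map $\psi_{\mathfrak{m}}\circ\pi$ promised in \eqref{eq:concrete_reduction}.
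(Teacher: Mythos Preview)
Your proposal is correct and follows essentially the same route as the paper: define the surjective $K$-algebra homomorphism $c_<:K(\!(\mathbf{t})\!)^\circ\to K$, $f/g\mapsto f_{\min_<(\Supp(g))}/g_{\min_<(\Supp(g))}$, verify well-definedness via the vertex/Minkowski-sum argument, and identify $\mathfrak{m}_<$ as its kernel. Your write-up is in fact somewhat more explicit than the paper's about the key combinatorial step (uniqueness of the decomposition of $\min_<(\Supp(gh))$ as $\min_<(\Supp(g))+\min_<(\Supp(h))$), which the paper handles in the well-definedness check and then waves through for additivity and multiplicativity.
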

\begin{proof}
We claim that $\mathfrak{m}_{<}$ is the kernel of the map

\begin{equation}
    \begin{aligned}
        c_< : K(\!(\mathbf{t})\!)^\circ &\to K\\
        \frac{f}{g} &\longmapsto \frac{f_{\min\!{}_{<} ( \Supp(g) )}}{g_{\min\!{}_{<} ( \Supp(g) )}}.
    \end{aligned}
\end{equation}

Let us first show that the map $c_<$ is well-defined. Let $f/g=h/\ell$ with $f,g,h,\ell \in K[\![\mathbf{t}]\!]$, we need to show that $$f_{\min\!{}_{<} ( \Supp(g) )}\ell_{\min\!{}_{<} ( \Supp(\ell))}=h_{\min\!{}_{<} ( \Supp(\ell) )}g_{\min\!{}_{<} ( \Supp(g) )}.$$
If $f_{\min\!{}_{<}(\Supp(g))}=0$ or $h_{\min\!{}_{<}(\Supp(\ell))}=0$, the equality follows easily. Let us suppose that $f_{\min\!{}_{<}(\Supp(g))}\neq0$ and $h_{\min\!{}_{<}(\Supp(\ell))}\neq0$. 
Note that $\min\!{}_{<}(\Supp(g))\leq \min\!{}_{<}(\Supp(f))$ and $\min\!{}_{<}(\Supp(g))< \min\!{}_{<}(\Supp(f))$ if and only if $f_{\min\!{}_{<}(\Supp(g))}=0$. 
Thus, $\min\!{}_{<}(\Supp(f))=\min\!{}_{<}(\Supp(g))$ and $\min\!{}_{<}(\Supp(h))=\min\!{}_{<}(\Supp(\ell))$ for the same reason.
Since $f\ell=gh$, we have $$X:=\trop(f\ell)=\trop(f)\trop(\ell)=\trop(hg)=\trop(h)\trop(g),$$
and $I\in X$ if and only if there exist unique $J,K,L,M$ in the respective vertex sets such that $I=J+K=L+M$. In particular, for every $I\in X$ we have $$(f\ell)_I=f_J\ell_K=g_Lh_M=(gh)_I.$$
We set $$I_< := \min\!{}_{<}(\Supp(f))+\min\!{}_{<}(\Supp(\ell))=\min\!{}_{<}(\Supp(g))+\min\!{}_{<}(\Supp(h)).$$
We need show that $I_<\in X$. Suppose that $I_<=A+B$ for some $A\in \trop(f)$ and $B\in\trop(\ell)$, then $\min\!{}_{<}(\Supp({f}))\leq A$ and $\min\!{}_{<}(\Supp(\ell))\leq B$. Thus, 
$$I_<=\min\!{}_{<}(\Supp({f}\ell))=\min\!{}_{<}(\Supp({f}))+\min\!{}_{<}(\Supp(\ell))\leq A+B=I_<,$$
which holds only for {the uniquely determined} $A=\min\!{}_{<}(\Supp({f}))$ and $B=\min\!{}_{<}(\Supp(\ell))$. Thus, $I_<\in X$.

The additivity and multiplicativity of $c_<$ can be shown in a similar way. 
Thus, $c_<$ defines a surjective homomorphism such that, by the first isomorphism theorem,
\[K(\!(\mathbf{t})\!)^{\circ}/\mathfrak{m}_{<} = K(\!(\mathbf{t})\!)^{\circ}/\text{ker}(c_<) \cong K,\]
which is the case exactly if $\mathfrak{m}_{<}$ is a maximal ideal.
\end{proof}

Note that given a monomial order $<$ on $\mathbb N^m$, expressions of the form $\frac{\mathbf{t}^J}{\mathbf{t}^I + \mathbf{t}^J}$ are in $\mathfrak{m}_<$ if and only if $I<J$. 
Later we will use exactly those fractions to define a monomial order on $\mathbb N^m$ from a given maximal ideal $\mathfrak{m}\subset K(\!(\mathbf{t})\!)^\circ$.
Before we do that, we first establish that $K(\!(\mathbf{t})\!)^\circ / \mathfrak{m} \cong K$ for every maximal ideal $\mathfrak{m}$ of $K(\!(\mathbf{t})\!)^\circ$.

\begin{definition}
\label{dfn_relevant}
Let $a\leq b$ in $V\mathbb{B}[\mathbf{t}]$, with supports $A$,$B$ respectively. We say that $a$ is \emph{irrelevant} for $b$ if  $A\cap B=\emptyset$, and we write $a\ll b$. Otherwise we say that $a$ is relevant for $b$. If $\frac{a}{b}\leq \frac{c}{d}$ in $V\mathbb{B}(\mathbf{t})$, then $\frac{a}{b}\ll \frac{c}{d}$ if and only if $a\odot d\ll c\odot b$ in $V\mathbb{B}[\mathbf{t}]$.
\end{definition}

\begin{lemma}\label{irrelevant_contained_in_maximal_ideal}
Let $q \in K(\!(\mathbf{t})\!)^\circ$ be such that $\trop(q) \ll 1$. Then $q$ is contained in every maximal ideal of $K(\!(\mathbf{t})\!)^\circ$. 
\end{lemma}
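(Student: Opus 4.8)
The plan is to show that $q$ lies in the Jacobson radical $J$ of $K(\!(\mathbf{t})\!)^\circ$, i.e.\ in the intersection of all maximal ideals. By the standard characterization of the Jacobson radical of a commutative ring this is equivalent to: $1-rq\in U(K(\!(\mathbf{t})\!)^\circ)$ for every $r\in K(\!(\mathbf{t})\!)^\circ$, and by \cref{cor:unit_ball} this means $\trop(1-rq)=1$. Since $rq\in K(\!(\mathbf{t})\!)^\circ$ we get for free $\trop(1-rq)\le 1\oplus\trop(rq)=1$, so only the reverse inequality must be proved. I would carry this out in two steps (one may assume $q\neq 0$, and then $rq\neq 1$, since $\trop(q)\ll 1$ forces $\trop(q)\neq 1$, so $q\notin U(K(\!(\mathbf{t})\!)^\circ)$).

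\emph{Step 1: irrelevance is stable under multiplication by unit-ball elements.} If $\trop(q)\ll 1$ and $\trop(r)\le 1$, then $\trop(rq)=\trop(r)\odot\trop(q)\ll 1$. Writing $q=f/g$, $r=h/\ell$ with $f,g,h,\ell\in K[\![\mathbf{t}]\!]$ and translating through \cref{dfn_relevant} and the definition of $\oplus$: one has $\mathrm{Newt}(f)\subseteq\mathrm{Newt}(g)$ and $\mathrm{Newt}(h)\subseteq\mathrm{Newt}(\ell)$ (from the bound $\le 1$), the vertex sets of $\mathrm{Newt}(f)$ and $\mathrm{Newt}(g)$ are disjoint (from $\ll 1$), and the goal is the disjointness of the vertex sets of $\mathrm{Newt}(h)+\mathrm{Newt}(f)$ and $\mathrm{Newt}(\ell)+\mathrm{Newt}(g)$. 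This uses the elementary fact that a vertex of a Minkowski sum of two Newton polyhedra decomposes uniquely as a sum of a vertex of each summand: a common vertex $v$ of the two sums would be $v=v_h+v_f=v_\ell+v_g$ with $v_f$ a vertex of $\mathrm{Newt}(f)$; since $v_f\in\mathrm{Newt}(f)\subseteq\mathrm{Newt}(g)$ it is also a vertex of $\mathrm{Newt}(g)$, and uniqueness of the decomposition of $v$ inside $\mathrm{Newt}(\ell)+\mathrm{Newt}(g)$ then forces $v_f=v_g$, contradicting disjointness.

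\emph{Step 2: an irrelevant perturbation of $1$ is a unit.} If $w\in K(\!(\mathbf{t})\!)^\circ$ with $\trop(w)\ll 1$, then $\trop(1-w)=1$. Write $w=f/g$ ($f,g\in K[\![\mathbf{t}]\!]$, $g\neq 0$); since $1-w=(g-f)/g$ it suffices to prove $\trop(g-f)=\trop(g)$. As in Step 1 one has $\mathrm{Newt}(f)\subseteq\mathrm{Newt}(g)$ and the vertex sets disjoint, so no vertex $v$ of $\mathrm{Newt}(g)$ lies in $\Supp(f)$ — otherwise $v$, being an extreme point of $\mathrm{Newt}(g)$ contained in the subpolyhedron $\mathrm{Newt}(f)$, would also be a vertex of $\mathrm{Newt}(f)$. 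Hence for each vertex $v$ of $\mathrm{Newt}(g)$ the $\mathbf{t}^{v}$-coefficient of $g-f$ equals that of $g$ and is nonzero, while $\Supp(g-f)\subseteq\Supp(g)\cup\Supp(f)\subseteq\mathrm{Newt}(g)$; therefore $\mathrm{Newt}(g-f)=\mathrm{Newt}(g)$, i.e.\ $\trop(g-f)=\trop(g)$. Taking $w=rq$ (legitimate by Step 1) gives $\trop(1-rq)=1$ for all $r\in K(\!(\mathbf{t})\!)^\circ$, hence $q\in J$, as desired. One could equally argue on the semiring side, using \cref{ideals_are_the_same} to reduce to showing that $\trop(q)$ lies in every maximal $k$-ideal $\mathfrak{M}$ of $V\mathbb{B}(\mathbf{t})^\circ$ and deriving the same Minkowski-polyhedron contradiction from a hypothetical relation $m\oplus s\,\trop(q)=1$ with $m\in\mathfrak{M}$ (hence $m\neq 1$) and $s\in V\mathbb{B}(\mathbf{t})^\circ$.

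I expect the main obstacle to be the polyhedral bookkeeping common to both steps: the unique decomposition of a vertex of a Minkowski sum, the dictionary between the order and the product on $V\mathbb{B}(\mathbf{t})$ and containments/Minkowski sums of Newton polyhedra, and the point — implicit in \cref{dfn_relevant} — that whether an element of $V\mathbb{B}(\mathbf{t})^\circ$ is $\ll 1$ does not depend on the chosen fractional representative. Everything else is formal, resting only on \cref{cor:unit_ball}.
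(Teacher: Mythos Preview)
Your argument is correct and is essentially the paper's proof in a different wrapper: the paper fixes a maximal ideal $\mathfrak m$, writes $1=r+qs$ with $r\in\mathfrak m$, and shows $r=1-qs$ is a unit via the same polyhedral computation you carry out in Step~2 (it also asserts your Step~1 without proof). Your use of the Jacobson-radical characterization and \cref{cor:unit_ball} just repackages that contradiction into a direct verification that $1-rq$ is always a unit.
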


\begin{proof}
Let $\mathfrak{m}$ be a maximal ideal of $K(\!(\mathbf{t})\!)^\circ$. Suppose it does not contain $q$. 
Then $(1) = \mathfrak{m} + (q)$ by maximality of $\mathfrak{m}$. 
So there are $r \in \mathfrak{m}$ and $s \in K(\!(\mathbf{t})\!)^\circ$ such that $1 = r + qs$. 
Since $\trop(q) \ll 1$ and $\trop(s) \leq 1$ we have $\trop(qs) \ll 1$. 
Now write $r = f/g$ and $qs = h/g$ for $f,g,h \in K[\![\mathbf{t}]\!]$ (we have already equalized the denominators here). 
Then $1 = r+qs = (f + h)/g$. 
But $\trop(h/g) \ll 1$ means that the support of $h$ has no vertices of $g$. 
So $f$ has in its support all the vertices of $g$, meaning that $\trop(f) = \trop(g)$, so that $r = f/g$ is a unit in $K(\!(\mathbf{t})\!)^\circ$. 
But $r \in \mathfrak{m}$, so $\mathfrak{m} = (1)$, which is not a maximal ideal of $K(\!(\mathbf{t})\!)^\circ$. 
\end{proof}

\begin{lemma}\label{product_of_differences}
Given $q \in K(\!(\mathbf{t})\!)^\circ$, there exists an integer $n \geq 1$ and elements $\alpha_1, \ldots, \alpha_n \in K$ such that $\trop\left(\prod_{k = 1}^n ( q - \alpha_k) \right)  \ll 1.$
\end{lemma}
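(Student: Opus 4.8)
The goal is to show that, for $q=f/g\in K(\!(\mathbf{t})\!)^\circ$, some finite product $\prod_{k=1}^n(q-\alpha_k)$ has tropical value strictly below $1$, meaning (after clearing denominators) its numerator misses all vertices of the common denominator. The natural strategy is to use \cref{irrelevant_contained_in_maximal_ideal} in reverse spirit: we want to kill, one vertex at a time, the "relevant part" of $\trop(q)$. For a fixed vertex $I\in\Supp(g)$ (equivalently, a vertex of $\trop(g)$), the coefficient of $\mathbf t^I$ in $f$ is some scalar $f_I\in K$ (possibly zero), while $g_I\neq 0$. Setting $\alpha_I:=f_I/g_I\in K$, the numerator of $q-\alpha_I=(f-\alpha_I g)/g$ now has a zero coefficient at $\mathbf t^I$, so $I\notin\Supp(f-\alpha_I g)$; in particular $I$ is no longer a "shared" vertex for this new representative. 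The plan is to iterate this over the finitely many vertices of $\trop(g)$.

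The first step is to make precise that $\trop(g)=V(\Supp(g))$ is a \emph{finite} set — this is exactly what it means to be a vertex polynomial in $V\mathbb B[\mathbf t]$ — so we are only required to perform finitely many such subtractions. Enumerate $\Vert(\trop(g))=\{I_1,\dots,I_n\}$. I would then argue by a short induction: having chosen $\alpha_1,\dots,\alpha_r$, set $q_r:=\prod_{k=1}^r(q-\alpha_k)=f^{(r)}/g^r$ with $f^{(r)}\in K[\![\mathbf t]\!]$, and choose $\alpha_{r+1}:=f^{(r)}_{I_{r+1}}/(g^r)_{I_{r+1}}$, which is well-defined since $(g^r)_{I_{r+1}}\neq 0$ (as $I_{r+1}$ is a vertex of $\Supp(g)$, hence $rI_{r+1}$ — wait, more carefully: one should track that the monomial $\mathbf t^{I_{r+1}}$ still appears in $g^r$, or instead work with a \emph{common} denominator $g^n$ for all the $q-\alpha_k$ simultaneously and subtract at the $n$ distinct vertices of $\Supp(g^n)=n\cdot\Supp(g)$, which is cleaner). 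Either way, after $n$ steps the numerator $f^{(n)}$ of $\prod_{k=1}^n(q-\alpha_k)$ has vanishing coefficient at every vertex of the denominator, so $\Supp(f^{(n)})\cap\Vert(\trop(g^n))=\emptyset$, which by \cref{dfn_relevant} says precisely $\trop\!\big(\prod(q-\alpha_k)\big)\ll 1$.

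The one subtlety — and the main thing to get right — is the bookkeeping of \emph{which} vertices survive as I multiply. When I replace $f$ by $f-\alpha g$, the new numerator has support contained in $\Supp(f)\cup\Supp(g)$ but I have only guaranteed that \emph{one} particular vertex of $g$ drops out; a later multiplication by another factor $q-\alpha'$ could in principle reintroduce a nonzero coefficient at a vertex I had already cleared. The clean fix is to fix the common denominator $G:=g^n$ once and for all, write each $q-\alpha_k=(f-\alpha_k g)g^{n-1}/G$, and note that the numerator of the full product is $F:=\prod_{k=1}^n(f-\alpha_k g)$; then choose the $\alpha_k$ recursively so that $F$ — built as a genuine product — has a zero of the appropriate order forcing $F_{I}=0$ for each vertex $I$ of $\Supp(G)=n\Supp(g)$. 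Concretely, at a vertex $J=nI$ of $\Supp(G)$ (with $I\in\Vert(\trop(g))$), the leading behaviour of $F$ at $\mathbf t^J$ is $\prod_k(f-\alpha_k g)_{I}=\prod_k(f_I-\alpha_k g_I)$, so it suffices to include among the $\alpha_k$ the value $f_I/g_I$ for each of the $n$ vertices $I$; then every factor of the product contributing at $J$ vanishes. This handles all vertices at once and sidesteps the reintroduction problem.
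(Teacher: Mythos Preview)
Your approach is essentially the paper's: write $q=f/g$, take $n$ to be the number of vertices $\{I_1,\dots,I_n\}$ of $\trop(g)$, set $\alpha_k=f_{I_k}/g_{I_k}$, and show that the numerator $F=\prod_k(f-\alpha_k g)$ of $\prod_k(q-\alpha_k)$ misses every vertex $nI_k$ of $g^n$.

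The one point that is asserted rather than argued is your claim that ``the leading behaviour of $F$ at $\mathbf t^{nI}$ is $\prod_k(f-\alpha_k g)_I$''. This amounts to saying that the \emph{only} way to write $nI$ as $J_1+\cdots+J_n$ with $J_k\in\Supp(f-\alpha_k g)$ is $J_1=\cdots=J_n=I$. That is true, but it needs the fact that $I$ is a vertex of the Newton polytope $P(g)$ and that each $\Supp(f-\alpha_k g)\subset P(g)$ (from $\trop(f)\le\trop(g)$): pick $w\in\mathbb R^m$ with $w\cdot I<w\cdot J$ for all $J\in P(g)\setminus\{I\}$, and sum. This is exactly the weight-vector argument the paper writes out, and without it the coefficient of $\mathbf t^{nI}$ in $F$ could in principle receive contributions from other decompositions. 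Once this is in place, one factor $(f-\alpha_k g)_{I_k}=0$ kills the product (your phrase ``every factor \dots\ vanishes'' is a slip; only the $k$-th factor vanishes at $I_k$, which is enough).
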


\begin{proof}
Let $q = f/g$ with $f,g \in K[\![\mathbf{t}]\!]$. 
If $f\ll g$, then $(q-0)=q\ll1$. 
Otherwise, if $\{I_1\ldots, I_n\}$ is the vertex set of $g$, then $\trop(f)\cap \{I_1\ldots, I_n\}\neq\emptyset$. 
For each $k = 1, \ldots, n$, we set 
$\alpha_k := \frac{f_{I_k}}{g_{I_k}}$.
Notice that the denominator $g_{I_k}$ is never zero, but the numerator, and thus $\alpha_k$, might be zero. 
Let $r = \prod_{k = 1}^n (q - \alpha_k)$. We claim that $\trop(r) \ll 1$. 
First, we compute 
\[ 
r = \prod_{k = 1}^n (q - \alpha_k) = \frac{\prod_{k=1}^n [f - \alpha_k g]}{g^n}. 
\]
We need to show that $\trop(\prod_{k=1}^n [f - \alpha_k g]) \ll \trop(g^n)$. 
Note that the vertices of $g^n$ are of the form $n I_k$ for $k = 1,\ldots, n$.
Fix a value for $k$; we prove that $nI_k$ is not a vertex of $(f - \alpha_1 g)\cdots (f - \alpha_n g)$:
If $I_k\notin \trop(f)\cap \{I_1\ldots, I_n\}$, then $\alpha_k=0$ and $(f-\alpha_kg)_{I_k}=f_{I_k}=0$.
If $I_k\in \trop(f)\cap \{I_1\ldots, I_n\}$, then $(f-\alpha_kg)_{I_k}=0$.

Pick any weight vector $w \in \mathbb R^m$ such that $I_k$ is the $w$-minimal vertex of $\Supp(g)$.
Since $\trop(f) \leq \trop(g)$ we have that $\trop(f - \alpha_i g) \leq \trop(g)$ for each $i$, and so either $I_k$ appears in $\trop(f - \alpha_i g)$ in which case it is the $w$-minimal vertex of it, or $I_k$ does not appear, in which case the $w$-minimal vertex of $f - \alpha_i g$ is strictly bigger than $I_k$. 
The latter happens at least once, namely for $i = k$, because by construction, the coefficient $(f - \alpha_k g)_{I_k}$ is zero. 
The $w$-minimal vertex of the product $\prod_i (f_i - \alpha_i g_i)$ is therefore strictly bigger (with respect to the $w$-ordering) than $nI_k$. 
Since $nI_k$ is the $w$-minimal vertex of $\Supp(g^n)$, it follows that $nI_k$ is not a vertex of the product.
\end{proof}

\begin{corollary}\label{quotient_by_maximal_ideal_is_K}
Let $\mathfrak{m}$ be a maximal ideal of $K(\!(\mathbf{t})\!)^\circ$. 
Then the composition of the inclusion $K \hookrightarrow K(\!(\mathbf{t})\!)^\circ$ with the quotient map $ K(\!(\mathbf{t})\!)^\circ \to K(\!(\mathbf{t})\!)^\circ / \mathfrak{m}$ is an isomorphism. 
Consequently, $K(\!(\mathbf{t})\!)^\circ / \mathfrak{m} \cong K$.
\end{corollary}

\begin{proof}
Let $q \in K(\!(\mathbf{t})\!)^\circ$ and pick $\alpha_1,\ldots, \alpha_n \in K$ as in \cref{product_of_differences}. 
Then \cref{irrelevant_contained_in_maximal_ideal} implies that $\prod_{i} (q - \alpha_i) \in \mathfrak{m}$. Since $\mathfrak{m}$ is prime, at least one of the factors is in $\mathfrak{m}$. 
Thus for every $q \in K(\!(\mathbf{t})\!)^\circ$ there is an $\alpha \in K$ such that $q - \alpha \in \mathfrak{m}$. This is equivalent to the statement that the composition $K \to K(\!(\mathbf{t})\!)^\circ \to K(\!(\mathbf{t})\!)^\circ / \mathfrak{m}$ is surjective and an isomorphism as injectivity follows from the fact that $K\cap \mathfrak m=\{0\}$.
\end{proof}

Now let us show that a maximal ideal $\mathfrak{m}$ of $K(\!(\mathbf{t})\!)^\circ$ induces a monomial order on $\mathbb{N}^m$. For any $I, J \in \mathbb{N}^m$ we define
\[
I \preceq_{\mathfrak{m}} J \quad \Longleftrightarrow \quad \frac{\mathbf{t}^I}{\mathbf{t}^I + \mathbf{t}^J} \notin \mathfrak{m}.
\]

\begin{example}\label{exp_continuation}
{
We continue with Example~\ref{exp}. 
Consider a maximal ideal $\mathfrak{m}\subset K(\!(t)\!)^{\circ}$. Then either $u\prec_{\mathfrak{m}}t$ or $t\prec_{\mathfrak{m}}u$. In the first case, we have $\frac{t}{t+u}\in \mathfrak m$, so $P_w\mod \mathfrak{m}=x_{(1,1)}$. In the second case, $\frac{u}{t+u}\in \mathfrak m$ and so 
\[
\frac{t}{t+u}-1=\frac{u}{t+u}\in \mathfrak m.
\]
Hence, in this case $P_w\mod \mathfrak m=x_{(1,1)}-x_{(0,0)}$.}
\end{example}

\begin{proposition}\label{Proposition_MaxIdeal_gives_Order}
The relation $\preceq_{\mathfrak{m}}$ on $\mathbb{N}^m$ is a monomial order.
\end{proposition}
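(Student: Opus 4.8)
The plan is to verify that $\preceq_{\mathfrak m}$ satisfies the three defining properties of a monomial order: it is a total order, it has $0$ as the global minimum, and it is translation-invariant. Throughout I will use the key structural facts already established: by \cref{quotient_by_maximal_ideal_is_K} the quotient map $c_{\mathfrak m}\colon K(\!(\mathbf t)\!)^\circ\to K(\!(\mathbf t)\!)^\circ/\mathfrak m\cong K$ is a ring homomorphism restricting to the identity on $K$; moreover $\frac{\mathbf t^I}{\mathbf t^I+\mathbf t^J}+\frac{\mathbf t^J}{\mathbf t^I+\mathbf t^J}=1$, so the two fractions $\frac{\mathbf t^I}{\mathbf t^I+\mathbf t^J}$ and $\frac{\mathbf t^J}{\mathbf t^I+\mathbf t^J}$ have $c_{\mathfrak m}$-images summing to $1$; since $\mathfrak m$ is prime and $\frac{\mathbf t^I}{\mathbf t^I+\mathbf t^J}\cdot\frac{\mathbf t^J}{\mathbf t^I+\mathbf t^J}$ is irrelevant for $1$ (its numerator $\mathbf t^{I+J}$ shares no vertex with $\mathbf t^{2I}+\mathbf t^{I+J}+\mathbf t^{2J}$ unless $I=J$), \cref{irrelevant_contained_in_maximal_ideal} forces at least one of the two fractions into $\mathfrak m$. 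This gives totality and antisymmetry simultaneously: exactly one of $\frac{\mathbf t^I}{\mathbf t^I+\mathbf t^J}$, $\frac{\mathbf t^J}{\mathbf t^I+\mathbf t^J}$ lies in $\mathfrak m$ when $I\neq J$, and when $I=J$ the fraction is $\frac12\in U$, hence not in $\mathfrak m$, so $I\preceq_{\mathfrak m}I$ holds; thus $\preceq_{\mathfrak m}$ is antisymmetric and total.

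\textbf{Reflexivity and the minimum at $0$.} Reflexivity is the observation just made: $\frac{\mathbf t^I}{\mathbf t^I+\mathbf t^I}=\frac12$ is a unit, so $I\preceq_{\mathfrak m}I$. For property (a), I must show $0\preceq_{\mathfrak m}J$ for all $J$, i.e. $\frac{1}{1+\mathbf t^J}\notin\mathfrak m$. But $1+\mathbf t^J$ has Newton polytope a segment (or a point) with $0$ as a vertex, so $\trop(1+\mathbf t^J)$ contains $0=\trop(1)$; hence $\frac{1}{1+\mathbf t^J}$ is a unit in $K(\!(\mathbf t)\!)^\circ$ by \cref{cor:unit_ball}, and units are never in a proper ideal. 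So $0\preceq_{\mathfrak m}J$ for every $J$, and by antisymmetry $0\prec_{\mathfrak m}J$ for $J\neq 0$.

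\textbf{Transitivity and translation-invariance.} For transitivity, suppose $I\preceq_{\mathfrak m}J$ and $J\preceq_{\mathfrak m}K$; I want $I\preceq_{\mathfrak m}K$. Using $c=c_{\mathfrak m}$, the hypothesis says $c\!\left(\frac{\mathbf t^I}{\mathbf t^I+\mathbf t^J}\right)\neq 0$ and $c\!\left(\frac{\mathbf t^J}{\mathbf t^J+\mathbf t^K}\right)\neq 0$. The cleanest route is to pass through \cref{prop_maxideal}: by \cref{thm_characterization}'s setup (or directly by the injectivity part of \cref{prop_maxideal}) every maximal ideal is of the form $\mathfrak m_<$ for the relation it induces, but rather than invoke circularity I will argue concretely. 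Write $c\!\left(\frac{\mathbf t^I}{\mathbf t^I+\mathbf t^J+\mathbf t^K}\right)$, etc.; since $\frac{\mathbf t^I}{\mathbf t^I+\mathbf t^J}=\frac{\mathbf t^I/(\mathbf t^I+\mathbf t^J+\mathbf t^K)}{(\mathbf t^I+\mathbf t^J)/(\mathbf t^I+\mathbf t^J+\mathbf t^K)}$ and $c$ is a homomorphism, one checks that $I\preceq_{\mathfrak m}J$ is equivalent to $c\!\left(\frac{\mathbf t^I}{\mathbf t^I+\mathbf t^J+\mathbf t^K}\right)\neq 0$, because the denominator fraction has $c$-image equal to $c\!\left(\frac{\mathbf t^I}{\mathbf t^I+\mathbf t^J+\mathbf t^K}\right)+c\!\left(\frac{\mathbf t^J}{\mathbf t^I+\mathbf t^J+\mathbf t^K}\right)$ and at most one of the three "barycentric'' images $c\!\left(\frac{\mathbf t^I}{\ast}\right),c\!\left(\frac{\mathbf t^J}{\ast}\right),c\!\left(\frac{\mathbf t^K}{\ast}\right)$ is nonzero (their product is irrelevant for $1$ and their sum is $1$, so exactly one is nonzero by primeness and \cref{irrelevant_contained_in_maximal_ideal}). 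Thus $I\preceq_{\mathfrak m}J$ means the unique nonzero barycentric image is the $I$-one, $J\preceq_{\mathfrak m}K$ means it is the $J$-one — but these cannot both hold with $I\neq J\neq K$ unless in fact $I$ is the $\preceq_{\mathfrak m}$-least of $\{I,J,K\}$, giving $I\preceq_{\mathfrak m}K$; the degenerate cases where two indices coincide are immediate. Finally, for property (b), translation-invariance: given $I\prec_{\mathfrak m}J$ and $C\in\mathbb N^m$, multiply numerator and denominator by $\mathbf t^C$ to get $\frac{\mathbf t^{I+C}}{\mathbf t^{I+C}+\mathbf t^{J+C}}=\frac{\mathbf t^C\mathbf t^I}{\mathbf t^C(\mathbf t^I+\mathbf t^J)}$; since $\trop(\mathbf t^C)=\mathbf t^C$ cancels in the fraction semifield and $c$ is multiplicative with $c(\mathbf t^C\cdot\text{(unit-type quotient)})$ behaving accordingly, we get $c\!\left(\frac{\mathbf t^{I+C}}{\mathbf t^{I+C}+\mathbf t^{J+C}}\right)=c\!\left(\frac{\mathbf t^I}{\mathbf t^I+\mathbf t^J}\right)$, so $I+C\prec_{\mathfrak m}J+C$.

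\textbf{Main obstacle.} The routine points (reflexivity, minimum at $0$, translation-invariance) follow directly from \cref{cor:unit_ball} and multiplicativity of $c_{\mathfrak m}$. The genuine work is \emph{transitivity}: one needs the "three-term'' version of the dichotomy — that among the three barycentric fractions $\mathbf t^I/(\mathbf t^I+\mathbf t^J+\mathbf t^K)$, $\mathbf t^J/(\mathbf t^I+\mathbf t^J+\mathbf t^K)$, $\mathbf t^K/(\mathbf t^I+\mathbf t^J+\mathbf t^K)$ exactly one has nonzero image under $c_{\mathfrak m}$, and that "$I\preceq_{\mathfrak m}J$'' in the original two-term sense is equivalent to "the $I$-term beats the $J$-term'' in this three-term picture. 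Establishing this compatibility between the two- and three-term comparisons — i.e. that the pairwise relation is the restriction of a genuine global minimum-selection — is the crux, and it rests on the primeness of $\mathfrak m$ together with \cref{irrelevant_contained_in_maximal_ideal} applied to the product of the barycentric fractions (whose numerator $\mathbf t^{I+J+K}$ is generically not a vertex of the cube of the denominator). Once that lemma-within-the-proof is in place, transitivity and the remaining axioms drop out.
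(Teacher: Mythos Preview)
Your reflexivity, minimum-at-$0$, totality/antisymmetry, and translation-invariance arguments are essentially those of the paper (your antisymmetry is in fact slightly cleaner: you use the product $\frac{\mathbf t^I}{\mathbf t^I+\mathbf t^J}\cdot\frac{\mathbf t^J}{\mathbf t^I+\mathbf t^J}$ directly, whereas the paper first passes to $\frac{\mathbf t^J}{\mathbf t^I-\mathbf t^J}$ via \cref{trop_determines_elements_of_ideal}). For translation-invariance you can simply say $\frac{\mathbf t^{I+C}}{\mathbf t^{I+C}+\mathbf t^{J+C}}=\frac{\mathbf t^{I}}{\mathbf t^{I}+\mathbf t^{J}}$ as elements of $K(\!(\mathbf t)\!)$; the detour through $c_{\mathfrak m}$ is unnecessary.

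Your transitivity argument, however, has a genuine error. You assert that, with $S=\mathbf t^I+\mathbf t^J+\mathbf t^K$, the relation $I\preceq_{\mathfrak m}J$ is \emph{equivalent} to $c_{\mathfrak m}(\mathbf t^I/S)\neq 0$. This is false: if the unique nonzero barycentric image were the $K$-one, then $c_{\mathfrak m}(\mathbf t^I/S)=c_{\mathfrak m}(\mathbf t^J/S)=0$, yet by totality one of $I\preceq_{\mathfrak m}J$, $J\preceq_{\mathfrak m}I$ still holds. Your subsequent sentence (``$I\preceq_{\mathfrak m}J$ means the $I$-one is nonzero, $J\preceq_{\mathfrak m}K$ means the $J$-one is nonzero, contradiction unless\ldots'') therefore collapses: taken literally, your hypotheses become inconsistent rather than yielding $I\preceq_{\mathfrak m}K$. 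The triple-product-is-irrelevant observation alone only gives ``at least one in $\mathfrak m$'', not ``exactly two in $\mathfrak m$''; for the latter you would need the pairwise products to be irrelevant as well (which is true, but you neither state nor use it).

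The paper's transitivity argument uses the same three-term denominator $S$ but runs the implications in the correct direction. From $I\preceq_{\mathfrak m}J$ and antisymmetry one gets $\frac{\mathbf t^J}{\mathbf t^I+\mathbf t^J}\in\mathfrak m$, hence $\frac{\mathbf t^J}{S}=\frac{\mathbf t^J}{\mathbf t^I+\mathbf t^J}\cdot\frac{\mathbf t^I+\mathbf t^J}{S}\in\mathfrak m$; similarly $\frac{\mathbf t^K}{S}\in\mathfrak m$. Since the three barycentric fractions sum to $1$, this forces $\frac{\mathbf t^I}{S}\notin\mathfrak m$, and factoring $\frac{\mathbf t^I}{S}=\frac{\mathbf t^I}{\mathbf t^I+\mathbf t^K}\cdot\frac{\mathbf t^I+\mathbf t^K}{S}$ gives $\frac{\mathbf t^I}{\mathbf t^I+\mathbf t^K}\notin\mathfrak m$. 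The point is that the two-term relation only pushes \emph{one way} into the three-term picture (``$J$ loses to $I$'' implies $\mathbf t^J/S\in\mathfrak m$), and that is exactly what is needed; your attempted biconditional is both false and unnecessary.
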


\begin{proof}
We note that the relations $I \preceq_{\mathfrak{m}} I$ and $0 \preceq_{\mathfrak{m}} J$ follow from the observations that $1/2$ and $1/(1 + \mathbf{t}^J)$ are units in $K(\!(\mathbf{t})\!)^\circ$ and therefore not in $\mathfrak{m}$.

Let $I, J \in \mathbb{N}^m$ with $I \preceq_{\mathfrak{m}} J$ and let $K\in \mathbb{N}^m$.
Since $\frac{\mathbf{t}^I}{\mathbf{t}^I + \mathbf{t}^J} \notin \mathfrak{m}$, we obtain that $$\frac{\mathbf{t}^K}{\mathbf{t}^K} \cdot \frac{\mathbf{t}^I}{\mathbf{t}^I + \mathbf{t}^J} = \frac{\mathbf{t}^{I+K}}{\mathbf{t}^{I+K} + \mathbf{t}^{J+K}} \notin \mathfrak{m}$$ and thus, $I+K \preceq_{\mathfrak{m}} J+K$.
Now let $I, J \in \mathbb{N}^m$ be arbitrary. Note that
\[
\frac{\mathbf{t}^I}{\mathbf{t}^I + \mathbf{t}^J} + \frac{\mathbf{t}^J}{\mathbf{t}^I + \mathbf{t}^J} = 1 \notin \mathfrak{m}
\]
so at least one of the two terms is not in $\mathfrak{m}$. 
So we have $I \preceq_{\mathfrak{m}} J$ or $J \preceq_{\mathfrak{m}} I$. 
Suppose that both terms are not in $\mathfrak{m}$. 
If $I \neq J$ then we have $\trop ( \mathbf{t}^J/(\mathbf{t}^I + \mathbf{t}^J) ) = \trop( \mathbf{t}^J / (\mathbf{t}^I - \mathbf{t}^J) )$, so by \cref{trop_determines_elements_of_ideal} we find that $\mathbf{t}^J / (\mathbf{t}^I - \mathbf{t}^J)$ is also not in $\mathfrak{m}$. 
Now consider the product
\[ 
q = \frac{\mathbf{t}^I}{\mathbf{t}^I + \mathbf{t}^J} \cdot \frac{\mathbf{t}^J}{\mathbf{t}^I - \mathbf{t}^J} = \frac{\mathbf{t}^{I + J}}{\mathbf{t}^{2I} - \mathbf{t}^{2J}}. 
\]
This is a product of two elements of $K(\!(\mathbf{t})\!)^\circ \setminus \mathfrak{m}$, and therefore not in $\mathfrak{m}$. 
But we have $\trop(q) \ll 1$ since $I + J$ is not a vertex of $\{2I, 2J\}$. This contradicts \cref{irrelevant_contained_in_maximal_ideal}. 
So we conclude that $I = J$, which yields the anti-symmetry.

Now it remains to show that $\preceq_{\mathfrak{m}}$ is transitive. Let $I,J,L \in \mathbb{N}^m$ all distinct with $I \preceq_{\mathfrak{m}} J$ and $J \preceq_{\mathfrak{m}} L$. 
We have 
\[ 
\frac{\mathbf{t}^{J}}{\mathbf{t}^I + \mathbf{t}^J + \mathbf{t}^L} = \frac{\mathbf{t}^J}{\mathbf{t}^I + \mathbf{t}^J}\frac{\mathbf{t}^I + \mathbf{t}^J}{\mathbf{t}^I + \mathbf{t}^J + \mathbf{t}^L} \in \mathfrak{m} 
\]
since the first factor is in $\mathfrak{m}$ and the second in $K(\!(\mathbf{t})\!)^\circ$. 
Similarly, we have
\[ 
\frac{\mathbf{t}^{L}}{\mathbf{t}^I + \mathbf{t}^J + \mathbf{t}^L} = \frac{\mathbf{t}^L}{\mathbf{t}^J + \mathbf{t}^L}\frac{\mathbf{t}^J + \mathbf{t}^L}{\mathbf{t}^I + \mathbf{t}^J + \mathbf{t}^L} \in \mathfrak{m} 
\]
Since $1 = (\mathbf{t}^I + \mathbf{t}^J + \mathbf{t}^L)/(\mathbf{t}^I + \mathbf{t}^J + \mathbf{t}^L)$ is not in $\mathfrak{m}$, we must have that $\mathbf{t}^I / (\mathbf{t}^I + \mathbf{t}^J + \mathbf{t}^L)$ is not in $\mathfrak{m}$. 
From the same factorization
\[ 
\frac{\mathbf{t}^{I}}{\mathbf{t}^I + \mathbf{t}^J + \mathbf{t}^L} = \frac{\mathbf{t}^I}{\mathbf{t}^I + \mathbf{t}^L}\frac{\mathbf{t}^I + \mathbf{t}^L}{\mathbf{t}^I + \mathbf{t}^J + \mathbf{t}^L}
\]
it then follows that $\mathbf{t}^I / (\mathbf{t}^I + \mathbf{t}^L)$ is not in $\mathfrak{m}$, so that $I \preceq_{\mathfrak{m}} L$. 
\end{proof}

{For the case $m=1$, we have that $K(\!(t)\!)^\circ=K[\![t]\!]$ and $\mathfrak m=(t)$, and we recover the fact that the unique maximal ideal encodes the unique monomial order on $\mathbb{N}$.

\begin{corollary}
\label{cor:max:ideals:unitball}
    Every maximal $k$-ideal of $V\mathbb{B}(\mathbf{t})^\circ$ is of the form 
    \begin{equation*}
        \mathfrak{m}_<^\dag=\{\tfrac{a}{b}\in V\mathbb{B}(\mathbf{t})^\circ\::\:a_{\min_<(b)}=0\}
    \end{equation*}
    for some monomial order $<$ on $\mathbb{N}^m$.
\end{corollary}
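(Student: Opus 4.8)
The plan is to derive this entirely from the correspondence Theorem~\ref{ideals_are_the_same} applied to the tropical valuation, together with the already established Theorem~\ref{thm_characterization}; only a short, elementary computation with the section $e_K$ is then needed to match the two explicit descriptions. First I would record that, since $\trop^{\circ}\colon K(\!(\mathbf{t})\!)^{\circ}\to V\mathbb{B}(\mathbf{t})^{\circ}$ is surjective (\cref{cor:surjectivity}), the mutually inverse maps $I\mapsto\trop(I)$ and $J\mapsto\trop^{-1}(J)$ of \cref{ideals_are_the_same} are inclusion-preserving in both directions and carry $K(\!(\mathbf{t})\!)^{\circ}$ to $V\mathbb{B}(\mathbf{t})^{\circ}$ and back; hence this order-isomorphism between ideals of $K(\!(\mathbf{t})\!)^{\circ}$ and $k$-ideals of $V\mathbb{B}(\mathbf{t})^{\circ}$ restricts to a bijection between the \emph{maximal} ideals of $K(\!(\mathbf{t})\!)^{\circ}$ and the \emph{maximal} $k$-ideals of $V\mathbb{B}(\mathbf{t})^{\circ}$. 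Consequently, if $J\subset V\mathbb{B}(\mathbf{t})^{\circ}$ is a maximal $k$-ideal, then $\mathfrak m:=\trop^{-1}(J)$ is a maximal ideal of $K(\!(\mathbf{t})\!)^{\circ}$ with $\trop(\mathfrak m)=J$, and by \cref{thm_characterization} there is a monomial order $<$ on $\mathbb{N}^m$ with $\mathfrak m=\mathfrak m_{<}$. So the whole statement reduces to the identity $\trop(\mathfrak m_{<})=\mathfrak m_{<}^{\dag}$.

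To prove that identity I would use that, by \cref{ideals_are_the_same}, $\mathfrak m_{<}=\trop^{-1}(\trop(\mathfrak m_{<}))$, so membership of an element of $K(\!(\mathbf{t})\!)^{\circ}$ in $\mathfrak m_{<}$ depends only on its $\trop$-value; combined with the surjectivity of $\trop^{\circ}$ this gives, for every $\tfrac{a}{b}\in V\mathbb{B}(\mathbf{t})^{\circ}$ with $a,b\in V\mathbb{B}[\mathbf{t}]$ and $b\ne 0$,
\[
\tfrac{a}{b}\in\trop(\mathfrak m_{<})\ \Longleftrightarrow\ \frac{e_{K}(a)}{e_{K}(b)}\in\mathfrak m_{<},
\]
where I use the section $e_{K}$ of $\Supp$ to produce the lift $e_{K}(a)/e_{K}(b)$, which satisfies $\trop\bigl(e_{K}(a)/e_{K}(b)\bigr)=V(a)/V(b)=a/b$ since $a,b$ are already vertex polynomials. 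Now $e_{K}(a)$ has support exactly $a$ with all coefficients equal to $1$ (and likewise for $b$), so by the definition of $\mathfrak m_{<}$ in \cref{prop_maxideal} the right-hand side holds if and only if $\bigl(e_{K}(a)\bigr)_{\min_{<}(\Supp e_{K}(b))}=\bigl(e_{K}(a)\bigr)_{\min_{<}(b)}=0$, i.e.\ if and only if $\min_{<}(b)\notin a$, i.e.\ if and only if $a_{\min_{<}(b)}=0$. Hence $\trop(\mathfrak m_{<})=\mathfrak m_{<}^{\dag}$; as a byproduct the defining condition of $\mathfrak m_{<}^{\dag}$ is independent of the chosen representative, since it describes the manifestly well-defined set $\trop(\mathfrak m_{<})$. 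Together with the first paragraph this proves the corollary.

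The argument is essentially formal, and the only points that require care are: (i) correctly invoking that the correspondence of \cref{ideals_are_the_same} preserves maximality, which genuinely uses surjectivity of $\trop^{\circ}$ so that proper ideals are matched with proper $k$-ideals; and (ii) the interplay between a power series' support (an element of $\mathbb B[\![\mathbf{t}]\!]$) and its vertex polynomial (an element of $V\mathbb{B}[\mathbf{t}]$) — which is precisely why I would work with the \emph{canonical} lift $e_{K}(a)/e_{K}(b)$ of a fraction $a/b$ of vertex polynomials, for which support and vertex polynomial coincide and no further bookkeeping with Newton polyhedra is needed. I would also note in passing that the same computation realizes $\mathfrak m_{<}^{\dag}$ concretely as the kernel of the semiring homomorphism $V\mathbb{B}(\mathbf{t})^{\circ}\to\mathbb B$ sending $\tfrac{a}{b}$ to $a_{\min_{<}(b)}$, giving an independent verification that each $\mathfrak m_{<}^{\dag}$ is indeed a maximal $k$-ideal.
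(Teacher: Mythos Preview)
Your proposal is correct and follows essentially the same route as the paper: reduce via the correspondence Theorem~\ref{ideals_are_the_same} (applied to the surjective B\'ezout valuation $\trop$, Proposition~\ref{Proposition_trop_is_Bezout}) to the classification of maximal ideals of $K(\!(\mathbf{t})\!)^{\circ}$ given by Theorem~\ref{thm_characterization} and Proposition~\ref{prop_maxideal}, and then identify $\trop(\mathfrak m_{<})$ with $\mathfrak m_{<}^{\dag}$. You supply more detail than the paper at two points the paper leaves implicit---the order-isomorphism argument that the correspondence preserves maximality, and the explicit verification of $\trop(\mathfrak m_{<})=\mathfrak m_{<}^{\dag}$ via the lift $e_{K}(a)/e_{K}(b)$---but the strategy is the same.
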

\begin{proof}
By Theorem \ref{thm_characterization} and Proposition \ref{prop_maxideal}, every maximal ideal of $K(\!(\mathbf{t})\!)^\circ$ is of the form $\mathfrak{m}_<=\{f/g \in K(\!(\mathbf{t})\!)^\circ : f_{\min\!{}_{<} ( \Supp(g) )}=0\}$ for some monomial order $<$ on $\mathbb{N}^m$.

By Theorem \ref{ideals_are_the_same} and Proposition \ref{Proposition_trop_is_Bezout}, every maximal ideal of $V\mathbb{B}(\mathbf{t})^\circ$ is the image under trop of a maximal ideal of $K(\!(\mathbf{t})\!)^\circ$, and we have $\trop(\mathfrak{m}_<)=\mathfrak{m}_<^\dag$.
\end{proof}

An important consequence is that by construction {taking initial forms commutes with} multiplication. We derive this fact from Theorem \ref{thm_characterization} as follows.

\begin{proposition}
\label{prop:multiplicative}
    Consider a pair $(w,\mathfrak{m})$ of a weight $w=(w_1,\ldots,w_n)\in\mathbb{B}[\![\mathbf{t}]\!]^n$ and a maximal ideal $\mathfrak{m}\subset K(\!(\mathbf{t})\!)^{\circ}$. If $P,Q\in F_{m,n}$, then 
    \begin{equation*}
        \text{in}_{(w,\mathfrak{m})}(PQ)=\text{in}_{(w,\mathfrak{m})}(P)\text{in}_{(w,\mathfrak{m})}(Q).
    \end{equation*}
\end{proposition}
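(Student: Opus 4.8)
The plan is to show that $\text{in}_{(w,\mathfrak{m})}$ factors as a classical initial‑form operator, attached to an honest rank‑$m$ Krull valuation, composed with a ring endomorphism; multiplicativity then reduces to the classical case. Write $\sigma_w\colon F_{m,n}\to F_{m,n}$ for the $K(\!(\mathbf{t})\!)$‑algebra endomorphism $x_{i,J}\mapsto\overline{\Theta(J)(e_K(w_i))}\,x_{i,J}$; it is a ring homomorphism and, by \eqref{eq:init form}, $P_w=T(\trop_w(P))^{-1}\sigma_w(P)$ whenever $\trop_w(P)\neq0$. If $\trop_w(P)=0$ then $P_w=0$, hence $\text{in}_{(w,\mathfrak{m})}(P)=0$; and since $\trop_w$ is submultiplicative, $\trop_w(PQ)\le\trop_w(P)\odot\trop_w(Q)=0$, so $\text{in}_{(w,\mathfrak{m})}(PQ)=0$ and the identity holds (likewise if $\trop_w(Q)=0$). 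So assume $\trop_w(P),\trop_w(Q)\neq0$ from now on.

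Next, let $<$ be the monomial order on $\mathbb{N}^m$ corresponding to $\mathfrak m$ under \cref{thm_characterization}, extended to a compatible group order on $\mathbb{Z}^m$. Because a monomial order well‑orders $\mathbb{N}^m$ and the $<$‑least element of any support is a vertex (\cref{lem:mono is vertex}), every nonzero $f\in K[\![\mathbf{t}]\!]$ has a leading term $\mathrm{lt}_{<}(f)=f_{\mu}\mathbf{t}^{\mu}$ with $\mu=\min_{<}(\Supp f)$, and additivity of $<$ yields $\mathrm{lt}_{<}(fg)=\mathrm{lt}_{<}(f)\,\mathrm{lt}_{<}(g)$. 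Consequently $v_{<}(f/g):=\min_{<}(\Supp f)-\min_{<}(\Supp g)$ defines a surjective Krull valuation $v_{<}\colon K(\!(\mathbf{t})\!)^{*}\to(\mathbb{Z}^m,<)$ admitting the multiplicative section $I\mapsto\mathbf{t}^{I}$; its valuation ring contains $K(\!(\mathbf{t})\!)^{\circ}$, its residue field is $K$, and comparison with \cref{prop_maxideal} and \cref{quotient_by_maximal_ideal_is_K} identifies $\psi_{\mathfrak m}\circ\pi$ with the restriction to $K(\!(\mathbf{t})\!)^{\circ}$ of the $v_{<}$‑residue map: $\psi_{\mathfrak m}\circ\pi(h)$ is the residue of $h\,\mathbf{t}^{-v_{<}(h)}$ when $v_{<}(h)=0$, and $0$ when $0<v_{<}(h)$.

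Now rewrite the initial form. Put $\sigma_w(P)=\sum_M a'_M E_M$ with $a'_M:=a_M E_M(\overline{\Theta(J)(e_K(w_i))})$. Combining \eqref{eq:ev_mon} with \eqref{eq:initial_data_dm} gives $\trop(a'_M)=\trop(a_M)\odot V(E_M(w))$, hence $\bigoplus_M\trop(a'_M)=\trop_w(P)$ by \eqref{eq:tropw}. Applying to this identity the assignment $a/b\mapsto\min_{<}(a)-\min_{<}(b)$ on $V\mathbb{B}(\mathbf{t})$ — which is well defined, turns $\oplus$ into $\min$ by \cref{lem:mono is vertex}, and has composite $v_{<}$ with $\trop$ — together with $\trop(T(\trop_w(P))^{-1})=\trop_w(P)^{-1}$, yields $v_{<}\bigl(T(\trop_w(P))^{-1}\bigr)=-\min_M v_{<}(a'_M)$. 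Feeding this into the explicit formula \eqref{eq:explicit_initial_deg} and applying $\psi_{\mathfrak m}\circ\pi$ coefficientwise as described above, every monomial $E_M$ with $v_{<}(a'_M)$ not $<$‑minimal drops out and one is left with
\[
\text{in}_{(w,\mathfrak{m})}(P)=\textstyle\sum_{M:\,v_{<}(a'_M)\text{ is }<\text{-minimal}}\overline{a'_M\,\mathbf{t}^{-v_{<}(a'_M)}}\;E_M,
\]
which is exactly the classical initial form $\text{in}_{v_{<}}(\sigma_w(P))$ of $\sigma_w(P)$ over the valued field $(K(\!(\mathbf{t})\!),v_{<})$, computed with the monomial section $\mathbf{t}^{(-)}$.

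Finally, $\text{in}_{v_{<}}$ is multiplicative for this section: as usual, a coefficient of a product has $v_{<}$‑value at least the sum of the $<$‑minimal coefficient‑values of the factors, and if strict inequality held for every coefficient then $\text{in}_{v_{<}}(f)\,\text{in}_{v_{<}}(g)$ would vanish in the integral domain $K[x_{i,J}]$, contradicting that both factors are nonzero; since $\mathbf{t}^{(-)}$ is a group homomorphism, the resulting identity is exact rather than merely up to a scalar. As $\sigma_w$ is a ring homomorphism, $\sigma_w(PQ)=\sigma_w(P)\sigma_w(Q)$, and therefore
\[
\text{in}_{(w,\mathfrak{m})}(PQ)=\text{in}_{v_{<}}\!\bigl(\sigma_w(P)\sigma_w(Q)\bigr)=\text{in}_{v_{<}}(\sigma_w(P))\,\text{in}_{v_{<}}(\sigma_w(Q))=\text{in}_{(w,\mathfrak{m})}(P)\,\text{in}_{(w,\mathfrak{m})}(Q).
\]
The main obstacle is the bookkeeping of the third paragraph: verifying that $v_{<}$ is a bona fide $\mathbb{Z}^m$‑valued valuation (this is precisely where \cref{lem:mono is vertex} enters) and, above all, checking that the normaliser $T(\trop_w(P))^{-1}$ coincides with the monomial $\mathbf{t}^{-\min_M v_{<}(a'_M)}$ up to a $v_{<}$‑unit of residue $1$ — it is this precise matching of normalisations that upgrades "multiplicative up to a nonzero scalar" to the literal equality asserted in the statement.
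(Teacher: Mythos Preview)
Your proof is correct and takes a more structural route than the paper's. The paper expands $P_wQ_w$ and $(PQ)_w$ directly via \eqref{eq:explicit_translation}, observes that they differ only by the scalar $\tfrac{BD}{AC}$ versus $\tfrac{F}{E}$ (the respective normalisers), and then uses the multiplicativity of $\trop_w$ together with the explicit formula \eqref{eq:concrete_reduction} for $\psi_{\mathfrak m}\circ\pi$ to check that both scalars have the same image in $K$. You instead factor the operator as $\text{in}_{(w,\mathfrak m)}=\text{in}_{v_<}\circ\sigma_w$, where $\sigma_w$ is the substitution endomorphism $x_{i,J}\mapsto\overline{\Theta(J)(e_K(w_i))}\,x_{i,J}$ and $v_<$ is the rank-$m$ Krull valuation on $K(\!(\mathbf t)\!)$ attached to the monomial order via \cref{thm_characterization}; multiplicativity then reduces to the classical statement for $\text{in}_{v_<}$. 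Both arguments ultimately rest on the same hidden fact---that $T(\trop_w(P))^{-1}=B/A$ has $<$-leading coefficient $1$ because $A$ and $B$ are polynomials with all coefficients equal to $1$---but you isolate this as the ``residue $1$'' check, whereas the paper absorbs it into the coefficient comparison $(BD)_{\min_<(\Supp AC)}=F_{\min_<(\Supp E)}$. The payoff of your approach is that it explains \emph{why} the identity holds by exhibiting it as an instance of the standard Gr\"obner/tropical phenomenon, and it cleanly separates the differential-algebraic input ($\sigma_w$, a ring map) from the valuation-theoretic one ($v_<$); the paper's direct computation is more self-contained but less modular.
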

\begin{proof}
    Let $P=\sum_Ma_ME_M$ and $Q=\sum_Nb_NE_N$, with $\trop_w(P)=\tfrac{a}{b}$ and $\trop_w(Q)=\tfrac{c}{d}$, so that $T(\trop_w(P))^{-1}=\tfrac{B}{A}$ and $T(\trop_w(Q))^{-1}=\tfrac{D}{C}$. Then by \eqref{eq:explicit_translation}, we have 
    \begin{equation*}
        \begin{aligned}
            P_wQ_w&=[\sum_M[\tfrac{B}{A}a_ME_M(\overline{\Theta(J)(e_K(w_i)}))]E_M][\sum_N[\tfrac{D}{C}b_NE_N(\overline{\Theta(J)(e_K(w_i)}))]E_N]=
            \\&=\sum_O[\sum_{M+N=O}\tfrac{BD}{AC}a_Mb_NE_M(\overline{\Theta(J)(e_K(w_i)}))E_N(\overline{\Theta(J)(e_K(w_i)}))]E_ME_N.
        \end{aligned}
    \end{equation*}

    Now let $PQ=\sum_O(\sum_{M+N=O}a_Mb_N)E_ME_N$ with $\trop_w(PQ)=\tfrac{e}{f}$, so that $T(\trop_w(PQ))^{-1}=\tfrac{F}{E}$. Then 
    \begin{equation*}
        \begin{aligned}
            (PQ)_w=\sum_O[\sum_{M+N=O}\tfrac{F}{E}a_Mb_NE_M(\overline{\Theta(J)(e_K(w_i)}))E_N(\overline{\Theta(J)(e_K(w_i)}))]E_ME_N.
        \end{aligned}
    \end{equation*}

Let $<_{\mathfrak m}$ be the monomial order obtained by applying Theorem \ref{thm_characterization}, then by Proposition \ref{prop_maxideal}, the  reduction map $\psi_{\mathfrak m}\circ\pi:K(\!(\mathbf{t})\!)^{\circ}\xrightarrow[]{}K$ appearing in \eqref{eq:reduction} can be described concretely by 
\begin{equation}
\label{eq:concrete_reduction}
    \psi_{\mathfrak m}\circ\pi(\tfrac{f}{g})=\frac{f_{\min\!{}_{<_{\mathfrak m}} ( \Supp(g) )}}{g_{\min\!{}_{<_{\mathfrak m}} ( \Supp(g) )}}.
\end{equation}

Clearly $P_wQ_w$ and $(PQ)_w$ only differ by the constants  $\tfrac{BD}{AC}$ and $\tfrac{F}{E}$.  Now we have $\frac{a}{b}\odot\frac{c}{d}=\frac{e}{f}$, since $\trop_w$ is multiplicative. In particular we have $AC=E+R$ and $BD=F+S$ where $R$ (respectively $S$) does not have any vertex of $E$ (respectively of $F$). Thus $(BD)_{\min\!{}_{<_{\mathfrak m}} ( \Supp(AC) )}=F_{\min\!{}_{<_{\mathfrak m}} ( \Supp(E) )}$ and $(AC)_{\min\!{}_{<_{\mathfrak m}} ( \Supp(AC) )}=E_{\min\!{}_{<_{\mathfrak m}} ( \Supp(E) )}$, which yields $\psi_{\mathfrak m}\circ\pi(\tfrac{BD}{AC})=\psi_{\mathfrak m}\circ\pi(\tfrac{F}{E})$. We now apply \eqref{eq:explicit_initial_deg}, which finishes the proof. 
\end{proof}

One last remark concerning the concept of initial defined on \cite{FGH20+} for polynomials with coefficients in $K[\![\mathbf{t}]\!]$ is convenient. Given a weight $w=(w_1,\ldots,w_n)\in\mathbb{B}[\![\mathbf{t}]\!]^n$ and $P=\sum_Ma_ME_M\in K[\![\mathbf{t}]\!][x_{i,J}]$, its initial was defined as \begin{equation*} 
    \In_w(P)= \sum_{\substack{\trop_w(a_{M}E_M)\cap \trop_w(P)\neq \emptyset}} \overline{a_{M}}E_M.
\end{equation*}

Given a maximal ideal $\mathfrak{m}\subset K(\!(\mathbf{t})\!)^{\circ}$ corresponding to the monomial order  $<_{\mathfrak m}$,  by \eqref{eq:concrete_reduction} we have 

 \begin{equation}
 \label{eq:explicit:fpwcoeff}
        in_{(w,\mathfrak{m})}(P)=\sum_{I+J=\min\!{}_{<_{\mathfrak m}} (\trop_w(P))\in \trop_w(a_ME_M)}[c(w)_J(a_M)_I]E_M
    \end{equation}
where $c(w)_J\in\mathbb{N}$ is a constant that depends only on the weight vector $w$. Certainly, every $\text{in}_{(w,\mathfrak{m})}(P)$ can be recovered from $\In_w(P)$.
}

It remains future work to verify whether non-maximal prime ideals of $K(\!(\mathbf{t})\!)^\circ$ can also be encoded as some type of orders of $\mathbb{N}^m$.

\section*{Acknowledgments}
This paper was finished during a research visit from S.F. and C.G. at the Unidad Oaxaca IM-UNAM in the context of a CIMPA School in Oaxaca June 2023. 
We are thankful for the financial support of the Apoyo Especial Alfonso N\'apoles G\'andara (IM-UNAM) and the great hospitality during this time. We would like to acknowledge the support and input of Mercedes Haiech in the beginning of this project.

L.B. is partially supported by PAPIIT project IA100122 dgapa UNAM. S.F. is partially supported by the grant PID2020-113192GB-I00 (Mathematical Visualization: Foundations, Algorithms and Applications) from the Spanish MICINN and by the OeAD project FR 09/2022.

\footnotesize{
\bibliographystyle{alpha}
\bibliography{bib.bib}

\newcommand{\etalchar}[1]{$^{#1}$}
\begin{thebibliography}{FLH{\etalchar{+}}23}

\bibitem[AGT16]{AGT16}
Fuensanta Aroca, Cristhian Garay, and Zeinab Toghani.
\newblock The fundamental theorem of tropical differential algebraic geometry.
\newblock {\em Pacific J. Math.}, 283(2):257--270, 2016.

\bibitem[AI16]{Aroca-Ilardi}
Fuensanta Aroca and Giovanna Ilardi.
\newblock Newton's lemma for differential equations.
\newblock {\em Illinois J. Math.}, 60(3-4):859--867, 2016.

\bibitem[AR19]{ArocaRond}
Fuensanta Aroca and Guillaume Rond.
\newblock Support of {L}aurent series algebraic over the field of formal power
  series.
\newblock {\em Proc. Lond. Math. Soc. (3)}, 118(3):577--605, 2019.

\bibitem[BBG23]{BG}
F\'elix Baril~Boudreau and Cristhian Garay.
\newblock Tropicalization of schemes and sheaves.
\newblock {\em arXiv:2304.04872 [math.AG]}, 2023.

\bibitem[BG06]{bazzoni2006prufer}
Silvana Bazzoni and Sarah Glaz.
\newblock Pr{\"u}fer rings.
\newblock In {\em Multiplicative Ideal Theory in Commutative Algebra: A Tribute
  to the Work of Robert Gilmer}, pages 55--72. Springer, 2006.

\bibitem[Bos21]{Bos_initial}
Lara Bossinger.
\newblock Full-rank valuations and toric initial ideals.
\newblock {\em Int. Math. Res. Not. IMRN}, (10):7433--7469, 2021.

\bibitem[CGL23]{CGL}
Ethan Cotterill, Cristhian Garay, and Johana Luviano.
\newblock Exploring tropical differential equations.
\newblock {\em arXiv:2012.14067, to appear in Advances in Geometry}, 2023.

\bibitem[EH00]{EH_GeomSch}
David Eisenbud and Joe Harris.
\newblock {\em The geometry of schemes}, volume 197 of {\em Graduate Texts in
  Mathematics}.
\newblock Springer-Verlag, New York, 2000.

\bibitem[Eis95]{Eisenbud_commutative}
David Eisenbud.
\newblock {\em Commutative algebra}, volume 150 of {\em Graduate Texts in
  Mathematics}.
\newblock Springer-Verlag, New York, 1995.
\newblock With a view toward algebraic geometry.

\bibitem[Fer97]{ferrero1997prime}
Miguel Ferrero.
\newblock Prime ideals in polynomial rings in several indeterminates.
\newblock {\em Proceedings of the American Mathematical Society},
  125(1):67--74, 1997.

\bibitem[FLH{\etalchar{+}}20]{FGH20}
S.~Falkensteiner, C.~Garay L\'opez, M.~Haiech, M.P. Noordman, Z.~Toghani, and
  F.~Boulier.
\newblock The fundamental theorem of tropical partial differential algebraic
  geometry.
\newblock {\em Proc. 45th ISSAC (2020)}, 2020.

\bibitem[FLH{\etalchar{+}}23]{FGH20+}
S.~Falkensteiner, C.~Garay L\'opez, M.~Haiech, M.P. Noordman, F.~Boulier, and
  Z.~Toghani.
\newblock On initials and the fundamental theorem of tropical partial
  differential geometry.
\newblock {\em Journal of Symbolic Computation}, 115:53--73, 2023.

\bibitem[FT22]{FT20}
Alex Fink and Zeinab Toghani.
\newblock Initial forms and a notion of basis for tropical differential
  equations.
\newblock {\em Pacific J. Math.}, 318(2):453--468, 2022.

\bibitem[GM23]{mereta_fund}
J.~Giansiracusa and S.~Mereta.
\newblock A general framework for tropical differential equations.
\newblock {\em manuscripta math.}, 2023.

\bibitem[Gub13]{gubler2013guide}
Walter Gubler.
\newblock A guide to tropicalizations.
\newblock {\em Algebraic and combinatorial aspects of tropical geometry},
  589:125--189, 2013.

\bibitem[HG20]{HuGao2020}
Youren Hu and Xiao-Shan Gao.
\newblock {Tropical Differential Gr\"{o}bner Bases}.
\newblock {\em {Mathematics in Computer Science}}, 2020.

\bibitem[HH11]{HerzogHibi}
J\"{u}rgen Herzog and Takayuki Hibi.
\newblock {\em Monomial ideals}, volume 260 of {\em Graduate Texts in
  Mathematics}.
\newblock Springer-Verlag London, Ltd., London, 2011.

\bibitem[MS15]{maclagan2015introduction}
Diane Maclagan and Bernd Sturmfels.
\newblock {\em Introduction to tropical geometry}, volume 161 of {\em Graduate
  Studies in Mathematics}.
\newblock American Mathematical Society, Providence, RI, 2015.

\bibitem[RY08]{RY}
Wolfgang Rump and Yi~Chuan Yang.
\newblock Jaffard-{O}hm correspondence and {H}ochster duality.
\newblock {\em Bull. London Math. Soc.}, 4:263 -- 273, 2008.

\end{thebibliography}
}

\end{document}